\documentclass[a4paper]{article} 
\RequirePackage{fullpage}
\usepackage[usenames,dvipsnames]{xcolor}

\usepackage{booktabs}
\usepackage{subcaption}
\usepackage{enumerate}
\usepackage{charter} 
\usepackage{amsthm, amsmath, amssymb} 
\usepackage[euler-digits,small]{eulervm}
\usepackage{tikz}
\usepackage[colorlinks=true,linkcolor=black,citecolor=BlueViolet,urlcolor=Maroon]{hyperref}

\tikzstyle{filled vertex}=[fill=cyan, circle, thin, draw, inner sep=0pt, minimum width=3pt]
\tikzstyle{solution vertex}=[draw, thin, minimum width=3pt]
\tikzstyle{normal edge}=[thick]
\tikzstyle{potential edge}=[ForestGreen]
\tikzstyle{forbidden edge}=[line width=0.20mm, RubineRed, dashed]

\newtheorem{theorem}{Theorem}
\newtheorem{proposition}[theorem]{Proposition}
\newtheorem{lemma}[theorem]{Lemma}
\newtheorem{corollary}[theorem]{Corollary}

\title{Complementation in T-perfect Graphs}
\author{Yixin Cao\thanks{Department of Computing, Hong Kong Polytechnic University, Hong Kong, China. {Supported in part by the Hong Kong Research Grants Council (RGC) under grants 15201317 and 15226116, and the National Natural Science Foundation of China (NSFC) under grant 61972330.}  {\tt yixin.cao@polyu.edu.hk, shenghua.wang@connect.polyu.hk}.} 
  \and Shenghua Wang\footnotemark[1]
}
\date{}

\begin{document}
\maketitle

\begin{abstract}
  Inspired by applications of perfect graphs in combinatorial optimization, Chv{\'a}tal defined t-perfect graphs in 1970s.  The long efforts of characterizing t-perfect graphs started immediately, but embarrassingly, even a working conjecture on it is still missing after nearly 50 years.  Unlike perfect graphs, t-perfect graphs are not closed under substitution or complementation.  A full characterization of t-perfection with respect to substitution has been obtained by Benchetrit in his Ph.D.~thesis.  Through the present work we attempt to understand t-perfection with respect to complementation.  In particular, we show that there are only five pairs of graphs such that both the graphs and their complements are minimally t-imperfect.
\end{abstract}

\section{Introduction}

Partly motivated by Shannon's work on communication theory, Berge proposed the concept of perfect graphs, and the two perfect graph conjectures \cite{berge-63-perfect-graphs}, both settled now.
Chv{\'{a}}tal \cite{chvatal-75-graph-polytopes} and Padberg \cite{padberg-74-perfect-matrices} independently showed that the independent set polytope of a perfect graph (the convex hull of incidence vectors of independent sets of the graph) is determined by non-negativity and clique inequalities,
and this is part of efforts trying to characterize the {independent set polytope} of a graph~\cite{padberg-74-perfect-matrices, nemhauser-74-vertex-packing-polyhedra, nemhauser-75, chvatal-75-graph-polytopes}.
Chv{\'{a}}tal~\cite{chvatal-75-graph-polytopes} went further to propose a class of graphs directly defined by the properties of their independent set polytopes.  A graph is \textit{t-perfect} if its independent set polytope can be fully described by non-negativity, edge, and odd-cycle inequalities. 
The class of t-perfect graphs and the class of perfect graphs are incomparable: $C_5$ is t-perfect but not perfect, while $K_4$ is perfect but not t-perfect.
Similar as perfect graphs, the maximum independent set problem can be solved in polynomial time in t-perfect graphs~\cite{grotschel-86-relaxations-vertex-packing}; see also Eisenbrand et al.~\cite{eisenbrand-03-independent-set-t-perfect}.

The progress toward understanding t-perfection has been embarrassingly slow.  While the original paper of Berge~\cite{berge-60-perfect-conjecture} on perfect graphs already contains several important subclasses of perfect graphs, thus far, only few graph classes are known to be t-perfect.  Because of the absence of odd cycles, bipartite graphs are trivial examples, and this can be generalized to {almost bipartite graphs}~\cite{fonlupt-82-h-perfectness}.  Another class is the series-parallel graphs~\cite{boulala-79-series-parallel}.
Extending these two classes, Gerards~\cite{gerards-89-stable-sets} showed that any graph contains no \textit{odd-$K_{4}$} (a subdivision of $K_{4}$ in which every triangle of $K_{4}$ becomes an odd cycle) is t-perfect.

The strong perfect graph theorem states that a graph $G$ is perfect if and only if $G$ does not contain any odd hole (an induced cycle of length at least four) or its complement~\cite{chudnovsky-06-strong-perfect-graphs-theorem}.  In other words, the {minimally imperfect} graphs (imperfect graphs whose proper induced subgraphs are all perfect) are  odd holes and their complements.
Naturally, we would like to know minimally t-imperfect graphs, whose definition is more technical and is left to the next section.
However, even a working conjecture on them is still missing.
Full success has only been achieved on special graphs, e.g., claw-free graphs~\cite{bruhn-12-claw-free-t-perfect} and $P_{5}$-free graphs~\cite{bruhn-17-t-perfection-p5-free}.
In summary, known minimally t-imperfect graphs include $(3,3)$-partitionable graphs~\cite{chvatal-79-combinatorial-designs,bruhn-10-t-perfection-claw-free}, odd wheels~\cite{schrijver-03}, even M\"{o}bius ladders~\cite{shepherd-95-packing}, and the complements of some cycle powers ($\overline{C_{7}}$, $\overline{C_{13}^{3}}$, $\overline{C_{13}^{4}}$, $\overline{C_{19}^{7}}$); see Figures~\ref{fig:minimally t-imperfect} and~\ref{fig: D graphs}.  Note that Figure~\ref{fig: D graphs}(f) is precisely $\overline{C_{10}^{2}}$, while all even M\"{o}bius ladders are also complements of cycle powers.

\begin{figure}[h]
  \centering
  \tikzset{every path/.style={normal edge}, every node/.style={filled vertex}}
  \begin{subfigure}[b]{4.5cm}
    \centering
    \begin{tikzpicture}
      \foreach \i in {1,..., 3} {
          \draw ({90 - \i * (360 / 3)}:1) -- ({210 - \i * (360 / 3)}:1);
        }
      \foreach \i in {1,..., 3} {
          \draw ({90 - \i * (360 / 3)}:1) -- (0:0);
        }
      \foreach \i in {1,..., 3} {
          \node[filled vertex] at ({90 - \i * (360 / 3)}:1) {};
        }
      \node[filled vertex] at (0:0) {};
    \end{tikzpicture}
    \,
    \begin{tikzpicture}
      \foreach \i in {1,..., 5} {
          \draw ({18 - \i * (360 / 5)}:1) -- ({90 - \i * (360 / 5)}:1);
        }
      \foreach \i in {1,..., 5} {
          \draw ({18 - \i * (360 / 5)}:1) -- (0:0);
        }
      \foreach \i in {1,..., 5} {
          \node[filled vertex] at ({18 - \i * (360 / 5)}:1) {};
        }
      \node[filled vertex] at (0:0) {};
    \end{tikzpicture}
    \caption {$W_{2k+1}$ (odd wheels)}
  \end{subfigure}
  \begin{subfigure}[b]{5cm}
    \centering
    \begin{tikzpicture}
      \foreach \i in {1,..., 8} {
          \draw ({(90 - 360 / 8) - \i * (360 / 8)}:1) -- ({90 - \i * (360 / 8)}:1);
        }
      \foreach \i in {1,..., 4} {
          \draw ({90 - \i * (360 / 8)}:1) -- ({(90 - 360 / 8) - (\i + 3) * (360 / 8)}:1);
        }
      \foreach \i in {1,..., 8} {
          \node[filled vertex] at ({(90 - 360 / 8) - \i * (360 / 8)}:1) {};
        }
    \end{tikzpicture}
    \,
    \begin{tikzpicture}[scale = .68]
      \foreach \i in {1,..., 5} {
          \draw ({120 - \i * (360 / 6)}:1) -- ({60 - \i * (360 / 6)}:1);
          \draw ({120 - \i * (360 / 6)}:1.5) -- ({60 - \i * (360 / 6)}:1.5);
        }
      \foreach \i in {1,..., 6} {
          \node[filled vertex] (v\i) at ({\i * (360 / 6)}:1) {};
          \node[filled vertex] (u\i) at ({\i * (360 / 6)}:1.5) {};
          \draw (v\i) -- (u\i);
        }
      \draw (v1) -- (u2) (v2) -- (u1);
    \end{tikzpicture}
    \caption{$M_{2 k}$ (even M\"{o}bius ladders)}
  \end{subfigure}
  \begin{subfigure}[b]{5cm}
    \centering
    \begin{tikzpicture}
      \foreach \i in {1,..., 7} {
          \draw ({(90 - 360 / 7) - \i * (360 / 7)}:1) -- ({90 - \i * (360 / 7)}:1);
        }
      \foreach \i in {1,..., 7} {
          \draw ({90 - \i * (360 / 7)}:1) -- ({(90 - 360 / 7) - (\i + 1) * (360 / 7)}:1);
        }
      \foreach \i in {1,..., 7} {
          \node[filled vertex] at ({(90 - 360 / 7) - \i * (360 / 7)}:1) {};
        }
    \end{tikzpicture}
    \,
    \begin{tikzpicture}
      \foreach \i in {1,..., 10} {
          \draw ({\i * (360 / 10)}:1) -- ({36 + \i * (360 / 10)}:1);
        }
      \foreach \i in {1,..., 5} {
          \draw ({\i * (360 / 5)}:1) -- ({72 + \i * (360 / 5)}:1);
          \draw ({\i * (360 / 5)}:1) -- ({180 + \i * (360 / 5)}:1);
        }
      \foreach \i in {1,..., 5} {
          \draw ({36 + \i * (360 / 5)}:1) -- ({-36 + \i * (360 / 5)}:1);
        }
      \foreach \i in {1,..., 5} {
          \node (v\i) at ({144 + \i * (720 / 5)}:1) {};
        }
      \foreach \i in {1,..., 5} {
          \node (v\i) at ({-36 + \i * (720 / 5)}:1) {};
        }
    \end{tikzpicture}
    \caption{complements of cycle powers }
  \end{subfigure}
  \caption{Some minimally t-imperfect graphs.  Shown here are $W_3$, $W_5$, $M_{4}$, $M_{6}$, $\overline{C_{7}}$, and $\overline{C_{10}^{2}}$.}
  \label{fig:minimally t-imperfect}
\end{figure}
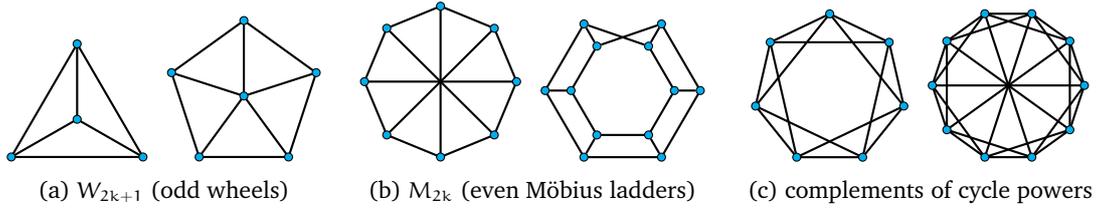

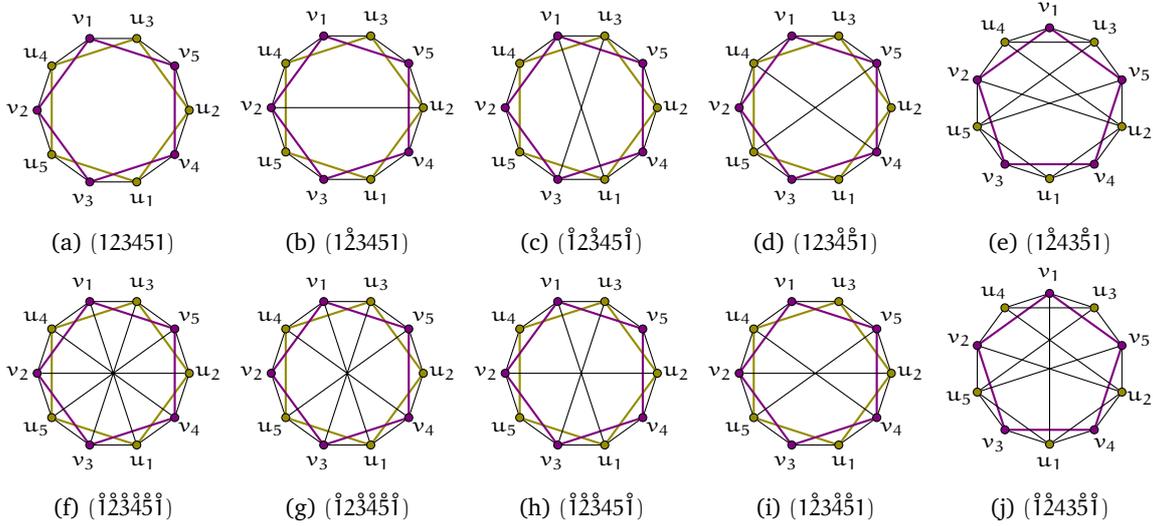
\begin{figure}[h]
  \centering
  \footnotesize
  \begin{subfigure}[b]{3cm}
    \centering
    \begin{tikzpicture}[object/.style={thin,double,<->}]
      \foreach \i in {1,..., 10} {
          \draw[thin] ({\i * (360 / 10)}:1) -- ({36 + \i * (360 / 10)}:1);
        }
      \foreach \i in {1,..., 5} {
          \draw[normal edge, olive] ({\i * (360 / 5)}:1) -- ({72 + \i * (360 / 5)}:1);
        }
      \foreach \i in {1,..., 5} {
          \draw[normal edge, violet] ({36 + \i * (360 / 5)}:1) -- ({-36 + \i * (360 / 5)}:1);
        }
      \foreach \i in {1,..., 5} {
          \node[filled vertex, fill = violet] (v\i) at ({36 + \i * (360 / 5)}:1) {};
          \node at ({36 + \i * (360 / 5)}:1.25) {$v_{\i}$};
        }
      \foreach \i in {1,..., 5} {
          \node[filled vertex, fill = olive] (u\i) at ({216 + \i * (360 / 5)}:1) {};
          \node at ({216 + \i * (360 / 5)}:1.25) {$u_{\i}$};
        }
    \end{tikzpicture}
    \caption{$(123451)$}
  \end{subfigure}
  \begin{subfigure}[b]{3cm}
    \centering
    \begin{tikzpicture}[object/.style={thin,double,<->}]
      \foreach \i in {1,..., 10} {
          \draw[thin] ({\i * (360 / 10)}:1) -- ({36 + \i * (360 / 10)}:1);
        }
      \foreach \i in {1,..., 5} {
          \draw[normal edge, olive] ({\i * (360 / 5)}:1) -- ({72 + \i * (360 / 5)}:1);
        }
      \foreach \i in {1,..., 5} {
          \draw[normal edge, violet] ({36 + \i * (360 / 5)}:1) -- ({-36 + \i * (360 / 5)}:1);
        }
      \foreach \i in {1,..., 5} {
          \node[filled vertex, fill = violet] (v\i) at ({36 + \i * (360 / 5)}:1) {};
          \node at ({36 + \i * (360 / 5)}:1.25) {$v_{\i}$};
        }
      \foreach \i in {1,..., 5} {
          \node[filled vertex, fill = olive] (u\i) at ({216 + \i * (360 / 5)}:1) {};
          \node at ({216 + \i * (360 / 5)}:1.25) {$u_{\i}$};
        }
      \draw (u2) -- (v2);
    \end{tikzpicture}
    \caption{$(1\mathring{2}3451)$}
  \end{subfigure}
  \begin{subfigure}[b]{3cm}
    \centering
    \begin{tikzpicture}[object/.style={thin,double,<->}]
      \foreach \i in {1,..., 10} {
          \draw[thin] ({\i * (360 / 10)}:1) -- ({36 + \i * (360 / 10)}:1);
        }
      \foreach \i in {1,..., 5} {
          \draw[normal edge, olive] ({\i * (360 / 5)}:1) -- ({72 + \i * (360 / 5)}:1);
        }
      \foreach \i in {1,..., 5} {
          \draw[normal edge, violet] ({36 + \i * (360 / 5)}:1) -- ({-36 + \i * (360 / 5)}:1);
        }
      \foreach \i in {1,..., 5} {
          \node[filled vertex, fill = violet] (v\i) at ({36 + \i * (360 / 5)}:1) {};
          \node at ({36 + \i * (360 / 5)}:1.25) {$v_{\i}$};
        }
      \foreach \i in {1,..., 5} {
          \node[filled vertex, fill = olive] (u\i) at ({216 + \i * (360 / 5)}:1) {};
          \node at ({216 + \i * (360 / 5)}:1.25) {$u_{\i}$};
        }
      \draw (u1) -- (v1) (u3) -- (v3);
    \end{tikzpicture}
    \caption{$(\mathring{1}2\mathring{3}45\mathring{1})$}
  \end{subfigure}
  \begin{subfigure}[b]{3cm}
    \centering
    \begin{tikzpicture}[object/.style={thin,double,<->}]
      \foreach \i in {1,..., 10} {
          \draw[thin] ({\i * (360 / 10)}:1) -- ({36 + \i * (360 / 10)}:1);
        }
      \foreach \i in {1,..., 5} {
          \draw[normal edge, olive] ({\i * (360 / 5)}:1) -- ({72 + \i * (360 / 5)}:1);
        }
      \foreach \i in {1,..., 5} {
          \draw[normal edge, violet] ({36 + \i * (360 / 5)}:1) -- ({-36 + \i * (360 / 5)}:1);
        }
      \foreach \i in {1,..., 5} {
          \node[filled vertex, fill = violet] (v\i) at ({36 + \i * (360 / 5)}:1) {};
          \node at ({36 + \i * (360 / 5)}:1.25) {$v_{\i}$};
        }
      \foreach \i in {1,..., 5} {
          \node[filled vertex, fill = olive] (u\i) at ({216 + \i * (360 / 5)}:1) {};
          \node at ({216 + \i * (360 / 5)}:1.25) {$u_{\i}$};
        }
      \draw (u4) -- (v4) (u5) -- (v5);
    \end{tikzpicture}
    \caption{$(123\mathring{4}\mathring{5}1)$}
  \end{subfigure}
  \begin{subfigure}[b]{3cm}
    \centering
    \begin{tikzpicture}[object/.style={thin,double,<->}]
      \foreach \i in {1,..., 10} {
          \draw[thin] ({\i * (360 / 10) - 18}:1) -- ({\i * (360 / 10) + 18}:1);
        }
      \foreach \i in {1,..., 5} {
          \draw[normal edge, violet] ({\i * (360 / 5) - 54}:1) -- ({\i * (360 / 5) + 18}:1);
        }
      \foreach \i in {1,..., 5} {
          \node[filled vertex, fill = violet] (v\i) at ({\i * (360 / 5) + 18}:1) {};
          \node at ({\i * (360 / 5) + 18}:1.25) {$v_{\i}$};
        }
      \foreach \i in {1,..., 5} {
          \node[filled vertex, fill = olive] (u\i) at ({\i * (360 / 5) - 162}:1) {};
          \node at ({\i * (360 / 5) - 162}:1.25) {$u_{\i}$};
        }
      \draw (u2) -- (u4) -- (u3) -- (u5) -- (u1) -- (u2);
      \draw (u2) -- (v2) (u5) -- (v5);
    \end{tikzpicture}
    \caption{$(1\mathring{2}43\mathring{5}1)$}
  \end{subfigure}

  \begin{subfigure}[b]{3cm}
    \centering
    \begin{tikzpicture}[object/.style={thin,double,<->}]
      \foreach \i in {1,..., 10} {
          \draw[thin] ({\i * (360 / 10)}:1) -- ({36 + \i * (360 / 10)}:1);
        }
      \foreach \i in {1,..., 5} {
          \draw[normal edge, olive] ({\i * (360 / 5)}:1) -- ({72 + \i * (360 / 5)}:1);
        }
      \foreach \i in {1,..., 5} {
          \draw[normal edge, violet] ({36 + \i * (360 / 5)}:1) -- ({-36 + \i * (360 / 5)}:1);
        }
      \foreach \i in {1,..., 5} {
          \node[filled vertex, fill = violet] (v\i) at ({36 + \i * (360 / 5)}:1) {};
          \node at ({36 + \i * (360 / 5)}:1.25) {$v_{\i}$};
        }
      \foreach \i in {1,..., 5} {
          \node[filled vertex, fill = olive] (u\i) at ({216 + \i * (360 / 5)}:1) {};
          \node at ({216 + \i * (360 / 5)}:1.25) {$u_{\i}$};
        }
        \draw (u1) -- (v1) (u2) -- (v2) (u3) -- (v3) (u4) -- (v4) (u5) -- (v5);
    \end{tikzpicture}
    \caption{$(\mathring{1}\mathring{2}\mathring{3}\mathring{4}\mathring{5}\mathring{1})$}
  \end{subfigure}
  \begin{subfigure}[b]{3cm}
    \centering
    \begin{tikzpicture}[object/.style={thin,double,<->}]
      \foreach \i in {1,..., 10} {
          \draw[thin] ({\i * (360 / 10)}:1) -- ({36 + \i * (360 / 10)}:1);
        }
      \foreach \i in {1,..., 5} {
          \draw[normal edge, olive] ({\i * (360 / 5)}:1) -- ({72 + \i * (360 / 5)}:1);
        }
      \foreach \i in {1,..., 5} {
          \draw[normal edge, violet] ({36 + \i * (360 / 5)}:1) -- ({-36 + \i * (360 / 5)}:1);
        }
      \foreach \i in {1,..., 5} {
          \node[filled vertex, fill = violet] (v\i) at ({36 + \i * (360 / 5)}:1) {};
          \node at ({36 + \i * (360 / 5)}:1.25) {$v_{\i}$};
        }
      \foreach \i in {1,..., 5} {
          \node[filled vertex, fill = olive] (u\i) at ({216 + \i * (360 / 5)}:1) {};
          \node at ({216 + \i * (360 / 5)}:1.25) {$u_{\i}$};
        }
        \draw (u1) -- (v1) (u3) -- (v3) (u4) -- (v4) (u5) -- (v5);
    \end{tikzpicture}
    \caption{$(\mathring{1}2\mathring{3}\mathring{4}\mathring{5}\mathring{1})$}
  \end{subfigure}
  \begin{subfigure}[b]{3cm}
    \centering
    \begin{tikzpicture}[object/.style={thin,double,<->}]
      \foreach \i in {1,..., 10} {
          \draw[thin] ({\i * (360 / 10)}:1) -- ({36 + \i * (360 / 10)}:1);
        }
      \foreach \i in {1,..., 5} {
          \draw[normal edge, olive] ({\i * (360 / 5)}:1) -- ({72 + \i * (360 / 5)}:1);
        }
      \foreach \i in {1,..., 5} {
          \draw[normal edge, violet] ({36 + \i * (360 / 5)}:1) -- ({-36 + \i * (360 / 5)}:1);
        }
      \foreach \i in {1,..., 5} {
          \node[filled vertex, fill = violet] (v\i) at ({36 + \i * (360 / 5)}:1) {};
          \node at ({36 + \i * (360 / 5)}:1.25) {$v_{\i}$};
        }
      \foreach \i in {1,..., 5} {
          \node[filled vertex, fill = olive] (u\i) at ({216 + \i * (360 / 5)}:1) {};
          \node at ({216 + \i * (360 / 5)}:1.25) {$u_{\i}$};
        }
        \draw (u1) -- (v1) (u2) -- (v2) (u3) -- (v3);
    \end{tikzpicture}
    \caption{$(\mathring{1}\mathring{2}\mathring{3}45\mathring{1})$}
  \end{subfigure}
  \begin{subfigure}[b]{3cm}
    \centering
    \begin{tikzpicture}[object/.style={thin,double,<->}]
      \foreach \i in {1,..., 10} {
          \draw[thin] ({\i * (360 / 10)}:1) -- ({36 + \i * (360 / 10)}:1);
        }
      \foreach \i in {1,..., 5} {
          \draw[normal edge, olive] ({\i * (360 / 5)}:1) -- ({72 + \i * (360 / 5)}:1);
        }
      \foreach \i in {1,..., 5} {
          \draw[normal edge, violet] ({36 + \i * (360 / 5)}:1) -- ({-36 + \i * (360 / 5)}:1);
        }
      \foreach \i in {1,..., 5} {
          \node[filled vertex, fill = violet] (v\i) at ({36 + \i * (360 / 5)}:1) {};
          \node at ({36 + \i * (360 / 5)}:1.25) {$v_{\i}$};
        }
      \foreach \i in {1,..., 5} {
          \node[filled vertex, fill = olive] (u\i) at ({216 + \i * (360 / 5)}:1) {};
          \node at ({216 + \i * (360 / 5)}:1.25) {$u_{\i}$};
        }
      \draw (u2) -- (v2) (u4) -- (v4) (u5) -- (v5);
    \end{tikzpicture}
    \caption{$(1\mathring{2}3\mathring{4}\mathring{5}1)$}
  \end{subfigure}
  \begin{subfigure}[b]{3cm}
    \centering
    \begin{tikzpicture}[object/.style={thin,double,<->}]
      \foreach \i in {1,..., 10} {
          \draw[thin] ({\i * (360 / 10) - 18}:1) -- ({\i * (360 / 10) + 18}:1);
        }
      \foreach \i in {1,..., 5} {
          \draw[normal edge, violet] ({\i * (360 / 5) - 54}:1) -- ({\i * (360 / 5) + 18}:1);
        }
      \foreach \i in {1,..., 5} {
          \node[filled vertex, fill = violet] (v\i) at ({\i * (360 / 5) + 18}:1) {};
          \node at ({\i * (360 / 5) + 18}:1.25) {$v_{\i}$};
        }
      \foreach \i in {1,..., 5} {
          \node[filled vertex, fill = olive] (u\i) at ({\i * (360 / 5) - 162}:1) {};
          \node at ({\i * (360 / 5) - 162}:1.25) {$u_{\i}$};
        }
      \draw (u2) -- (u4) -- (u3) -- (u5) -- (u1) -- (u2);
      \draw (u1) -- (v1) (u2) -- (v2) (u5) -- (v5);
    \end{tikzpicture}
    \caption{$(\mathring{1}\mathring{2}43\mathring{5}\mathring{1})$}
  \end{subfigure}
  \caption{The $(3,3)$-partitionable graphs (the notation will be introduced in Section~\ref{sec:minimal-graphs}).}
  \label{fig: D graphs}
\end{figure}

A particularly nice property of perfect graphs is that they are closed under complementation, known as the weak perfect graph theorem~\cite{lovasz-72-perfect-graphs}, also a trivial corollary of the strong perfect graph theorem.  The key step of proving the weak perfect graph theorem is the Replication Lemma: The class of perfect graphs is closed under (clique) substitution.  As evidenced by $K_4$, t-perfection is closed under neither substitution nor complementation, and this may partially explain the difficulty in characterizing t-perfect graphs.  Benchetrit~\cite{benchetrit-15-thesis} has fully characterized t-perfection with respect to substitution.
The purpose of this paper is to understand t-perfection with respect to complementation.  In particular, we want to know whether there exist minimally t-imperfect graphs whose complements are also minimally t-imperfect.  If we check Figure~\ref{fig:minimally t-imperfect} carefully, we may find that with few exceptions, (${W_3}$, ${W_5}$, ${W_7}$, $\overline{C_7}$, and $\overline{C_{10}^2}$), the complements of all the others contain a $K_4$, hence not minimally t-imperfect graphs.
On the other hand, it is quite obvious that the graphs in the second row of Figure~\ref{fig: D graphs} are precisely the complements of those in the first.
Our main result is that the ten $(3,3)$-partitionable graphs are all the minimally t-imperfect graphs whose complements are also minimally t-imperfect.

\begin{theorem}
  \label{thm:main}
  Let $G$ be a minimally t-imperfect graph.  The complement of ${G}$ is minimally t-imperfect if and only if $G$ is a $(3,3)$-partitionable graph.
\end{theorem}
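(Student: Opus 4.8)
The plan is to prove the two directions separately; the ``if'' direction is immediate from known facts, and essentially all the work is in the ``only if'' direction. For ``if'', recall that $(3,3)$-partitionable graphs are minimally t-imperfect~\cite{chvatal-79-combinatorial-designs,bruhn-10-t-perfection-claw-free}, and that the class of $(p,q)$-partitionable graphs is closed under complementation, with the parameters $(p,q)$ going to $(q,p)$: the defining property -- $|V|=pq+1$, and $G-v$ partitions into cliques and into stable sets for every $v$ -- is self-complementary. Hence the $(3,3)$-partitionable graphs form a self-complementary class, as is also visible in Figure~\ref{fig: D graphs} (its second row is the complements of the first). So if $G$ is $(3,3)$-partitionable then so is $\overline G$, and therefore $\overline G$ is minimally t-imperfect.

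For ``only if'', suppose both $G$ and $\overline G$ are minimally t-imperfect. First, $G\neq K_4$, since otherwise $\overline G$ would be edgeless, hence bipartite, hence t-perfect; symmetrically $\overline G\neq K_4$. Since $K_4$ is minimally t-imperfect and every proper induced subgraph of a minimally t-imperfect graph is t-perfect, $G$ (being different from $K_4$) has no induced $K_4$, i.e.\ $\omega(G)\le 3$; symmetrically $\alpha(G)=\omega(\overline G)\le 3$. If $\omega(G)\le 2$ then $G$ is triangle-free with $\alpha(G)\le 3$, so $|V(G)|\le R(3,4)-1=8$ by Ramsey's theorem; a finite inspection of such graphs -- and, symmetrically, of the case $\alpha(G)\le 2$ -- shows none has a minimally t-imperfect complement, the delicate instance being the $8$-vertex Möbius ladder, whose complement $C_8^2$ must be shown to be t-perfect. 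We may thus assume $\omega(G)=\alpha(G)=3$, whence $|V(G)|\le R(4,4)-1=17$.

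In this main case I would first shrink the bound by a neighbourhood argument. For every vertex $v$, the graph $G[N(v)]$ is triangle-free (a triangle in $N(v)$ together with $v$ is a $K_4$) and has no stable set of size $4$ (such a set is already a stable set of $G$), so $|N(v)|\le R(3,4)-1=8$; moreover if $|N(v)|\ge 7$ then $G[N(v)]$, being triangle-free and non-bipartite on at most $8$ vertices with $\alpha\le 3$, contains an induced $C_5$ or $C_7$, so $G$ has an induced $W_5$ or $W_7$, which forces $G=W_7$ or gives a contradiction; and $\overline{W_5}=K_1\cup C_5$ and $\overline{W_7}=K_1\cup\overline{C_7}$ are easily seen not to be minimally t-imperfect. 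So $\deg_G(v)\le 6$ for all $v$, and symmetrically $\deg_{\overline G}(v)\le 6$; combined with $\deg_G(v)\ge 3$ and $\deg_{\overline G}(v)\ge 3$ (minimally t-imperfect graphs have minimum degree at least $3$ and are $3$-connected), this forces $|V(G)|\le 13$ and confines every degree of $G$ to a short interval. It then remains to handle $|V(G)|\in\{6,\dots,13\}$: one shows that for $|V(G)|=10$ the minimality of t-imperfection of both $G$ and $\overline G$ (together with $\omega=\alpha=3$) forces the partition property, i.e.\ $G$ is one of the ten graphs of Figure~\ref{fig: D graphs} -- equivalently one matches the surviving $10$-vertex candidates against the classification of Chv\'atal, Graham, Perold and Whitesides~\cite{chvatal-79-combinatorial-designs} -- and that $|V(G)|\in\{6,\dots,9,11,12,13\}$ is impossible, using in addition that $G$ contains no t-imperfect induced subgraph (in particular none of $K_4,W_5,W_7,\overline{C_7}$), which together with the connectivity and degree constraints leaves only a finite list to check.

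The main obstacle is this last step: pinning $|V(G)|$ to exactly $10$ (ruling out $11\le|V(G)|\le 13$ from above and $6\le|V(G)|\le 9$ from below) and extracting the partition structure both require a careful structural case analysis rather than a single clean argument, and several of the remaining small cases -- including the triangle-free one -- ultimately come down to deciding t-perfection of specific graphs, for which, absent a general combinatorial certificate, one may have to examine the relevant facets of the stable-set polytope directly.
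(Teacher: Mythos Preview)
Your ``if'' direction is fine and matches the paper. The ``only if'' direction, however, has a genuine gap: you are missing the structural handle that makes the problem tractable.

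The paper does not rely on Ramsey-type degree arguments to bound $|V(G)|$. Instead, it first observes that $G$ (being a core graph and t-imperfect, and not $C_7$ or $\overline{C_7}$) must contain an induced $C_5$, and then proves two structural facts about any fixed $5$-hole $C$: every vertex outside $C$ is adjacent to either two consecutive vertices of $C$ or three non-consecutive vertices of $C$ (Lemma~\ref{lem:connection of u to C}), and every edge of $C$ has at most one common outside neighbour (Proposition~\ref{lem:duplicate}). Together these give $|V(G)|\le 10$ immediately, and more importantly they yield a canonical labelling $u_1,\dots,u_5$ of the vertices outside $C$ with a short explicit list of ``potential'' edges. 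All of Section~\ref{sec:mti} is a case analysis \emph{in these coordinates}: one decides which of the edges $u_iu_{i+1}$ are present and repeatedly invokes the six observations Obs.\ref{ob:1}--Obs.\ref{ob:12} to force or forbid the remaining potential edges. Without this $C_5$-based parametrisation, your plan for $n=10$ (``one shows that \dots\ forces the partition property'') is not a proof sketch but a restatement of the goal, and your residual range $11\le n\le 13$ has no visible attack at all.

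There is also a factual slip in your triangle-free branch: you write that $\overline{M_4}=C_8^2$ ``must be shown to be t-perfect'', but $C_8^2$ is \emph{not} t-perfect --- the all-$\tfrac13$ vector lies in $P(C_8^2)$ (every odd-cycle constraint is met) yet has coordinate sum $\tfrac83>2=\alpha(C_8^2)$. What you actually need there is that $C_8^2$ is not \emph{minimally} t-imperfect, which is a different (and still nontrivial) statement. This is another place where the paper's framework helps: rather than testing individual polytopes, it rules out $n=8$ uniformly via Lemma~\ref{lem:t-perfect-order-8} combined with the clique-separator degree bound~\eqref{eq:1}.

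In short: your neighbourhood/Ramsey reductions are correct as far as they go, but they stop well short of a proof. The missing idea is the $C_5$ structure lemma and the resulting ten-vertex pattern; once you have that, the case analysis becomes finite and mechanical.
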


By the Ramsey theorem, a graph on 18 or more vertices contains a $K_4$ or its complement.  Thus, it suffices to consider graphs of no more than 17 vertices.\footnote{However, there are already 12005168 non-isomorphic graphs of 10 vertices (\url{http://oeis.org/A000088}).}    We start from an easy observation that such a graph contains a $5$-hole.  Fixing a $5$-hole $C$, we study the connection between $C$ and other vertices, and it turns out that the graph cannot have more than ten vertices.   Moreover, all such graphs follow a few simple patterns, and a careful inspection of these patterns leads to the main result.
As a byproduct, we characterize all self-complementary t-perfect graphs that are not perfect.

\begin{theorem}
  \label{thm:self-complement}
  Let $G$ be a self-complementary graph that is not perfect.  Then $G$ is t-perfect if and only if
  $G$ is one of the five graphs in Figure~\ref{fig:self-complement}.
\end{theorem}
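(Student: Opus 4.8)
The plan is as follows. For the ``if'' direction we simply verify that each of the five graphs in Figure~\ref{fig:self-complement} is self-complementary (by exhibiting an isomorphism to its complement), is not perfect (each contains an induced $C_5$ --- indeed $C_5$ is one of the five), and is t-perfect (each is either series-parallel or contains no odd-$K_4$, so the results cited in the introduction apply, or t-perfection is checked directly). For the ``only if'' direction, let $G$ be self-complementary, not perfect, and t-perfect; we must show $G$ is one of these five graphs. The overall strategy is to first confine $G$ to a bounded number of vertices and force it to contain an induced $C_5$, then reuse the neighbourhood analysis around a fixed $5$-hole developed in the proof of Theorem~\ref{thm:main}, and finally use self-complementarity to eliminate all but five of the resulting candidates.

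First, the reductions. Since $W_3 = K_4$ is minimally t-imperfect and t-perfection is closed under induced subgraphs, $G$ contains no $K_4$; as $G \cong \overline G$, the graph $G$ also has no independent set of size $4$, so by Ramsey's theorem $|V(G)| \le 17$. Moreover a self-complementary graph on $n$ vertices has $\binom{n}{2}/2$ edges, so $n \equiv 0$ or $1 \pmod 4$, leaving $|V(G)| \in \{5, 8, 9, 12, 13, 16, 17\}$. Because $G$ is not perfect, the strong perfect graph theorem gives an induced odd hole or odd antihole of length at least $5$; since $G \cong \overline G$, the presence of an odd hole of length $\ell$ is equivalent to the presence of an odd antihole of length $\ell$. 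If $\ell = 7$ then $G$ contains the minimally t-imperfect graph $\overline{C_7}$, a contradiction; if $\ell \ge 9$ then $\overline{C_\ell}$, which $G$ contains, has an induced $K_4$ because $\alpha(C_\ell) = \lfloor \ell/2 \rfloor \ge 4$, again a contradiction. Hence every odd hole and odd antihole of $G$ has length $5$, and $G$ contains an induced $5$-hole $C = c_1 c_2 c_3 c_4 c_5 c_1$.

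Next, fix such a $C$ and partition $V(G) \setminus V(C)$ according to the trace $N(v) \cap V(C)$. Since $G$ is t-perfect it contains no minimally t-imperfect induced subgraph, so every local configuration that produces a $K_4$, a $W_5$, a $\overline{C_7}$, an odd wheel, or an even M\"obius ladder in the vicinity of $C$ is forbidden; these are precisely the obstructions treated in the proof of Theorem~\ref{thm:main}, so we may import the relevant part of that case analysis. Together with $|V(G)| \le 17$ this yields a finite, explicit list of graphs that $G$ could be. Finally we impose self-complementarity: any self-complementary graph admits a vertex permutation $\sigma$ realising an isomorphism onto its complement, and every orbit of $\sigma$ has length divisible by $4$ except for at most one fixed point; checking this against each candidate (equivalently, testing each candidate for isomorphism with its own complement) leaves exactly the five graphs of Figure~\ref{fig:self-complement}.

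The main obstacle is the concluding enumeration: even after the neighbourhood restrictions around $C$, the number of graphs to sift is appreciable (there are already dozens of self-complementary graphs on $8$ and on $9$ vertices), so the last step is in essence a finite --- and in places computer-assisted --- check. What makes it manageable is precisely the structural work done for Theorem~\ref{thm:main}: it should show that, apart from $C_5$ itself, every surviving graph has only a handful of vertices beyond $C$, reducing the final verification to a short case analysis.
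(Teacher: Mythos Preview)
Your broad strategy --- no $K_4$ or $\overline{K_4}$, odd holes forced to length $5$, fix a $5$-hole $C$ and classify the attachments of the remaining vertices --- is exactly the paper's. What you are missing are two sharpening steps that turn your anticipated ``computer-assisted check'' into a few lines by hand.

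First, the neighbourhood analysis around $C$ yields more than local obstructions: Lemma~\ref{lem:connection of u to C} and Proposition~\ref{lem:duplicate} together show that each vertex of $V(G)\setminus V(C)$ is the unique common neighbour of some edge of $C$, so there are at most five such vertices and $n\le 10$ (Corollary~\ref{cor:ten vertices}). Combined with $n\equiv 0,1\pmod 4$ this cuts your seven possible values of $n$ down to $\{5,8,9\}$ immediately.

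Second, for $n=9$ the paper does not enumerate candidates and then filter by self-complementarity. It first uses self-complementarity to show $G$ is degree-bounded: a vertex of degree $2$ in $U=V(G)\setminus V(C)$ would correspond under $G\cong\overline G$ to a vertex of degree $6$, which is also forced to lie in $U$; but the former is isolated in $G[U]$ while the latter is adjacent to all of $U$, impossible when $|U|=4$. Then Lemma~\ref{lem:minimal-graphs-of-pattern-g} --- the classification of degree-bounded order-$9$ core graphs developed in Section~\ref{sec:graph-with-pattern-g} --- already names the five self-complementary ones. A parallel degree argument together with Lemma~\ref{lem:t-perfect-order-8} eliminates $n=8$.

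One organisational caution: the lemmas you want are those of Sections~\ref{sec:minimal-graphs} and~\ref{sec:graph-with-pattern-g} on core graphs of order at most nine, not the order-ten case analysis in the proof of Theorem~\ref{thm:main}. In the paper Theorem~\ref{thm:self-complement} is established first and feeds into Theorem~\ref{thm:main}, not the reverse; moreover the arguments of Section~\ref{sec:mti} use the degree bound~\eqref{eq:1}, which comes from the absence of clique separators in \emph{minimally t-imperfect} graphs and is not available under your hypothesis that $G$ is t-perfect.
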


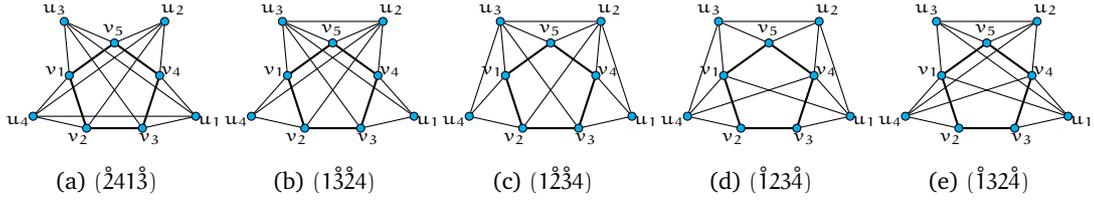
\begin{figure}[h]
  \centering
  \scriptsize
  \begin{subfigure}[b]{2.8cm}
    \centering
    \begin{tikzpicture}[scale=0.5]
      \foreach \i in {1,..., 5} {
          \draw[normal edge] ({\i * (360 / 5) - 54}:1.25) -- ({\i * (360 / 5) + 18}:1.25);
        }
      \foreach \i in {1,..., 4} {
          \draw ({\i * (360 / 5) - 126}:1.25) -- ({\i * (360 / 5) - 90}:2.25) -- ({\i * (360 / 5) - 54}:1.25);
          \node[filled vertex] (u\i) at ({\i * (360 / 5) - 90}:2.25) {};
          \node at ({\i * (360 / 5) - 90}:2.65) {$u_{\i}$};
        }
      \foreach \i in {1,..., 5} {
          \node[filled vertex] (v\i) at ({\i * (360 / 5) - 54}:1.25) {};
          \node at ({\i * (360 / 5) + 90}:1.6) {$v_{\i}$};
        }
      \draw (u2) -- (u4) -- (u1) -- (u3);
      \draw (u2) -- (v4) (u3) -- (v5);
    \end{tikzpicture}
    \caption{$(\mathring{2}41\mathring{3})$}
  \end{subfigure}
  \begin{subfigure}[b]{2.8cm}
    \centering
    \begin{tikzpicture}[scale=0.5]
      \foreach \i in {1,..., 5} {
          \draw[normal edge] ({\i * (360 / 5) - 54}:1.25) -- ({\i * (360 / 5) + 18}:1.25);
        }
      \foreach \i in {1,..., 4} {
          \draw ({\i * (360 / 5) - 126}:1.25) -- ({\i * (360 / 5) - 90}:2.25) -- ({\i * (360 / 5) - 54}:1.25);
          \node[filled vertex] (u\i) at ({\i * (360 / 5) - 90}:2.25) {};
          \node at ({\i * (360 / 5) - 90}:2.65) {$u_{\i}$};
        }
      \foreach \i in {1,..., 5} {
          \node[filled vertex] (v\i) at ({\i * (360 / 5) - 54}:1.25) {};
          \node at ({\i * (360 / 5) + 90}:1.6) {$v_{\i}$};
        }
      \draw (u1) -- (u3) -- (u2) -- (u4);
      \draw (u2) -- (v4) (u3) -- (v5);
    \end{tikzpicture}
    \caption{$(1\mathring{3}\mathring{2}4)$}
  \end{subfigure}
  \begin{subfigure}[b]{2.8cm}
    \centering
    \begin{tikzpicture}[scale=0.5]
      \foreach \i in {1,..., 5} {
          \draw[normal edge] ({\i * (360 / 5) - 54}:1.25) -- ({\i * (360 / 5) + 18}:1.25);
        }
      \foreach \i in {1,..., 4} {
          \draw ({\i * (360 / 5) - 126}:1.25) -- ({\i * (360 / 5) - 90}:2.25) -- ({\i * (360 / 5) - 54}:1.25);
          \node[filled vertex] (u\i) at ({\i * (360 / 5) - 90}:2.25) {};
          \node at ({\i * (360 / 5) - 90}:2.65) {$u_{\i}$};
        }
      \foreach \i in {1,..., 5} {
          \node[filled vertex] (v\i) at ({\i * (360 / 5) - 54}:1.25) {};
          \node at ({\i * (360 / 5) + 90}:1.6) {$v_{\i}$};
        }
      \draw (u1) -- (u2) -- (u3) -- (u4);
      \draw (u2) -- (v4) (u3) -- (v5);
    \end{tikzpicture}
    \caption{$(1\mathring{2}\mathring{3}4)$}
  \end{subfigure}
  \begin{subfigure}[b]{2.8cm}
    \centering
    \begin{tikzpicture}[scale=0.5]
      \foreach \i in {1,..., 5} {
          \draw[normal edge] ({\i * (360 / 5) - 54}:1.25) -- ({\i * (360 / 5) + 18}:1.25);
        }
      \foreach \i in {1,..., 4} {
          \draw ({\i * (360 / 5) - 126}:1.25) -- ({\i * (360 / 5) - 90}:2.25) -- ({\i * (360 / 5) - 54}:1.25);
          \node[filled vertex] (u\i) at ({\i * (360 / 5) - 90}:2.25) {};
          \node at ({\i * (360 / 5) - 90}:2.65) {$u_{\i}$};
        }
      \foreach \i in {1,..., 5} {
          \node[filled vertex] (v\i) at ({\i * (360 / 5) - 54}:1.25) {};
          \node at ({\i * (360 / 5) + 90}:1.6) {$v_{\i}$};
        }
      \draw (u1) -- (u2) -- (u3) -- (u4);
      \draw (u1) -- (v3) (u4) -- (v1);
    \end{tikzpicture}
    \caption{$(\mathring{1}23\mathring{4})$}
  \end{subfigure}
  \begin{subfigure}[b]{2.8cm}
    \centering
    \begin{tikzpicture}[scale=0.5]
      \foreach \i in {1,..., 5} {
          \draw[normal edge] ({\i * (360 / 5) - 54}:1.25) -- ({\i * (360 / 5) + 18}:1.25);
        }
      \foreach \i in {1,..., 4} {
          \draw ({\i * (360 / 5) - 126}:1.25) -- ({\i * (360 / 5) - 90}:2.25) -- ({\i * (360 / 5) - 54}:1.25);
          \node[filled vertex] (u\i) at ({\i * (360 / 5) - 90}:2.25) {};
          \node at ({\i * (360 / 5) - 90}:2.65) {$u_{\i}$};
        }
      \foreach \i in {1,..., 5} {
          \node[filled vertex] (v\i) at ({\i * (360 / 5) - 54}:1.25) {};
          \node at ({\i * (360 / 5) + 90}:1.6) {$v_{\i}$};
        }
      \draw (u1) -- (u3) -- (u2) -- (u4);
      \draw (u1) -- (v3) (u4) -- (v1);
    \end{tikzpicture}
    \caption{$(\mathring{1}32\mathring{4})$}
  \end{subfigure}
  \caption{Self-complementary t-perfect graphs that contain a $C_{5}$, shown by thick lines (the notation will be introduced in Section~\ref{sec:minimal-graphs}).}
  \label{fig:self-complement}
\end{figure}

All the other self-complementary t-perfect graphs are perfect.
Although there is an infinite number of self-complementary graphs that are perfect, e.g., obtained by the 4-path addition~\cite{kawarabayashi-02-separable-self-complementary-graphs}, almost all of them contain a $K_4$, hence not t-perfect.  Indeed, with the same argument by Ramsey theorem, a self-complementary t-perfect graph has at most 17 vertices.  We believe the number is very small.

\section{Core graphs}
\label{sec:minimal-graphs}

All graphs discussed in this paper are undirected and simple.  The vertex set and edge set of a graph $G$ are denoted by, respectively, $V(G)$ and $E(G)$.
Throughout the paper, we use $n$ to denote $|V(G)|$, the \emph{order} of $G$.
The two ends of an edge are \emph{neighbors} of each other, and the number of neighbors of $v\in V(G)$, denoted by $d(v)$, is its degree.  For a subset $U\subseteq V(G)$, let $G[U]$ denote the subgraph of $G$ induced by $U$, whose vertex set is $U$ and whose edge set comprises all the edges whose both ends are in $U$, and let $G - U = G[V(G)\setminus U]$, which is simplified as $G-v$ if $U$ comprises of a single vertex $v$.
A \emph{clique} is a set of pairwise adjacent vertices, and an \emph{independent set} is a set of vertices that are pairwise nonadjacent.
The complement $\overline{G}$ of a graph $G$ is defined on the same vertex set as $G$ and two distinct vertices of $\overline{G}$ are adjacent if and only if they are not adjacent in $G$.  Note that a clique of $G$ is an independent set of $\overline G$.
A graph is \emph{almost bipartite} if there is a vertex whose deletion leaves the graph bipartite.

For $\ell \geq 1$, we use $P_{\ell}$ and $K_{\ell}$ to denote the path graph and complete graph, respectively, on $\ell$ vertices.
For $\ell \geq 3$, we use $C_{\ell}$ and $W_{\ell}$ to denote, respectively, the $\ell$-cycle and the \textit{$\ell$-wheel}, which is obtained from a $C_\ell$ by adding a new vertex and making it adjacent to all vertices on the cycle; note that $W_3$ is precisely $K_4$.  For $\ell \ge 4$, an induced $\ell$-cycle is also called an \emph{$\ell$-hole}.
An $\ell$-cycle, $\ell$-hole, or $\ell$-wheel is \emph{odd} if $\ell$ is odd.
For $k \geq 1$, the \emph{$k$th power} of $C_{\ell}$, denoted by $C_{\ell}^{k}$, is obtained from $C_{\ell}$ by adding an edge between any two vertices of distance at most $k$.  For $k\ge 2$, the $(2k)$th \emph{even M\"{o}bius ladder} is $\overline{C_{4k}^{2k-2}}$.
For integers $p, q \geq 2$, a graph $G$ is \textit{$(p,q)$-partitionable} if $n=pq+1$ and for every vertex $v \in V(G)$, the set $V(G)\setminus \{v\}$ can be partitioned into $q$ independent sets of order $p$ and can be partitioned into $p$ cliques of order $q$.

The \emph{independent set polytope} of a graph $G$ is the convex hull of the characteristic vectors of all independent sets in $G$.
For a graph $G$, let $P(G)$ denote the polytope defined by
\begin{align*}
  0 \leq x_v & \leq 1                  &  & \text{for every vertex } v \in V(G),
  \\
  x_u + x_v  & \leq 1                  &  & \text{for every edge } u v\in E(G),
  \\
  x(V(C))    & \leq \frac{|V(C)|-1}{2} &  & \text{for every induced odd cycle $C$ in } G.
\end{align*}
It is worth pointing out that the last constraints can be imposed on all odd cycles instead of only induced ones, and there would be more constraints, but they determine the same polytope.
Since the characteristic vector of every independent set of $G$ is in $P(G)$, the {independent set polytope} of a graph $G$ is contained in $P(G)$, while the other direction is not true in general.
A graph $G$ is \emph{t-perfect} if $P(G)$ is precisely the independent set polytope of $G$.
It is not difficult to see that every vector in the independent set polytope of $G$ also satisfies the clique constraints
\[
  \sum_{v\in K} x_v \leq 1\qquad \text{for every clique } K \text{ of } G.
\]
Since the vector $(\frac{1}{3},\frac{1}{3},\frac{1}{3},\frac{1}{3})^{\mathsf{T}}$ is in $P(K_{4})$ but does not satisfy the clique constraint, $K_4$ is not t-perfect.

A graph $G$ is \emph{perfect} if neither $G$ nor $\overline G$ contains an odd hole~\cite{berge-60-perfect-conjecture, chudnovsky-06-strong-perfect-graphs-theorem}.
The independent set polytope of a perfect graph is determined by non-negativity and clique inequalities~\cite{chvatal-75-graph-polytopes, padberg-74-perfect-matrices}.
If a perfect graph $G$ contains no $K_{4}$, then every clique of $G$ has order at most three, and hence any clique constraint in $G$ is one of the three in the definition of $P(G)$.

\begin{proposition}\label{prop:perfect and t-perfect}
  Every $K_{4}$-free perfect graph is t-perfect.
\end{proposition}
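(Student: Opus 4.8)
The plan is simply to compare the two polytope descriptions constraint by constraint. By the result of Chv\'atal and Padberg quoted above, the independent set polytope of a perfect graph $G$ is exactly the set of points satisfying the non-negativity constraints $x_v \ge 0$ together with the clique constraints $\sum_{v\in K} x_v \le 1$. Since $G$ is $K_4$-free, every clique $K$ has $|K|\le 3$, so these clique constraints are of only three kinds: $x_v \le 1$ for single vertices, $x_u+x_v\le 1$ for edges $uv$, and $x_u+x_v+x_w\le 1$ for triangles $\{u,v,w\}$.

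First I would recall that the inclusion ``independent set polytope $\subseteq P(G)$'' holds for every graph, as already noted after the definition of $P(G)$ (each independent set's characteristic vector lies in $P(G)$). Hence it remains only to prove the reverse inclusion $P(G)\subseteq$ independent set polytope, i.e. that every point of $P(G)$ satisfies all of the non-negativity and clique constraints listed in the previous paragraph.

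The key step is to observe that each of those constraints is literally among the inequalities defining $P(G)$: the constraints $x_v\ge 0$, $x_v\le 1$, and $x_u+x_v\le 1$ (for $uv\in E(G)$) appear verbatim, while a triangle is precisely an induced odd cycle $C$ with $|V(C)|=3$, for which the odd-cycle inequality $x(V(C))\le \frac{|V(C)|-1}{2}$ reads exactly $x_u+x_v+x_w\le 1$. Consequently the feasible region of $P(G)$ is contained in the intersection of all these halfspaces, which is exactly the independent set polytope of $G$. Combining the two inclusions yields $P(G) = $ (independent set polytope of $G$), so $G$ is t-perfect.

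There is essentially no real obstacle here: the only external input is the Chv\'atal--Padberg description of the independent set polytope of a perfect graph, and the rest is a one-line containment of constraint sets once one notices that triangles are odd cycles of length three. The only point requiring a moment's care is that $P(G)$ may carry many additional inequalities (edge and odd-cycle constraints coming from non-cliques), but extra constraints can only shrink the polytope, which is exactly the direction needed for $P(G)\subseteq$ independent set polytope.
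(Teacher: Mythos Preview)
Your proposal is correct and follows essentially the same approach as the paper: invoke the Chv\'atal--Padberg description of the independent set polytope of a perfect graph, note that $K_4$-freeness forces every clique constraint to be a vertex, edge, or triangle (odd $3$-cycle) constraint already appearing in the definition of $P(G)$, and combine with the trivial inclusion. The paper in fact gives exactly this argument in the sentence immediately preceding the proposition.
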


It is easy to verify that t-perfection is preserved under vertex deletions: For every $v \in V(G)$, the polytope $P(G - v)$ is the intersection of $P(G)$ with the face $x_{v} = 0$.
Moreover, t-perfection is also preserved under \emph{t-contractions} at a vertex $v$ with $N(v)$ being an independent set---contracting $N(v)\cup \{v\}$ into a single vertex~\cite{gerards-98-all-subgraphs-t-perfect}.
Any graph $H$ that can be obtained from $G$ by a sequence of vertex deletions and t-contractions is a \emph{t-minor} of $G$, and $H$ is a \textit{proper t-minor} of $G$ if $H$ has fewer vertices than $G$.
Therefore, t-perfection is closed under taking t-minors.
A graph is \emph{minimally t-imperfect} if it is t-imperfect but all its proper t-minors are t-perfect, e.g., $K_4$.

We say that a graph $G$ is a \emph{core graph} if neither $G$ nor its complement contains a t-imperfect graph as a proper t-minor.  By definition, any t-minor of a core graph is also a core graph.  Moreover, if $G$ is a core graph, then $G$ is either t-perfect or minimally t-imperfect, and so is $\overline{G}$; it is possible that $G$ is t-perfect while $\overline{G}$ is minimally t-imperfect, e.g., $C_7$ and $\overline{C_7}$.  However, there are t-perfect graphs that are not core graphs, e.g., $C_9$ and $\overline{K_5}$.

\begin{proposition}\label{lem:K4 and I4 free}
  A core graph cannot contain a $K_4$ or its complement as a proper induced subgraph.
\end{proposition}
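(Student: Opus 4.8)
The statement follows almost immediately from the definitions, so the plan is short. I would first record the two facts on which everything rests: $K_4$ is t-imperfect, witnessed (as noted above) by $(\frac{1}{3},\frac{1}{3},\frac{1}{3},\frac{1}{3})^{\mathsf{T}}\in P(K_4)$; while $\overline{K_4}$ is edgeless, hence bipartite, hence t-perfect. I would also isolate one elementary observation: every proper induced subgraph of a graph is in particular a proper t-minor of it. Indeed, the induced subgraph $G[U]$ is obtained from $G$ by the sequence of vertex deletions that removes $V(G)\setminus U$, so it is a t-minor, and if $U\subsetneq V(G)$ then $G[U]$ has strictly fewer vertices than $G$, so the t-minor is proper.

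With this in hand, suppose for contradiction that a core graph $G$ contains $K_4$ as a proper induced subgraph. By the observation, $K_4$ is then a proper t-minor of $G$, and it is t-imperfect; this contradicts the definition of a core graph, which forbids $G$ from having any t-imperfect proper t-minor.

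For the complementary case, suppose $G$ contains $\overline{K_4}$ as a proper induced subgraph. Taking induced subgraphs commutes with complementation, so $\overline{G}$ contains $\overline{\overline{K_4}}=K_4$ as a proper induced subgraph, and therefore, again by the observation, as a proper t-minor. Since the definition of a core graph also forbids $\overline{G}$ from having a t-imperfect proper t-minor, we reach a contradiction once more.

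There is essentially no obstacle here; the only point to state carefully is that vertex deletions alone already realise all induced subgraphs, so that ``proper induced subgraph'' upgrades to ``proper t-minor.'' It may be worth remarking that the word \emph{proper} cannot be dropped: $K_4$ is itself a core graph---its complement is edgeless, and all t-minors of both $K_4$ and $\overline{K_4}$ are $K_4$-free perfect graphs, hence t-perfect by Proposition~\ref{prop:perfect and t-perfect}---yet $K_4$ does contain $K_4$ as an induced (though not proper induced) subgraph.
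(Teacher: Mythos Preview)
Your argument is correct and is essentially the approach the paper has in mind: the proposition is stated without proof (the short argument in the source is suppressed), and what you have written simply unpacks the definition of a core graph together with the fact that $K_4$ is t-imperfect and that proper induced subgraphs are proper t-minors. Your closing remark that $K_4$ itself is a core graph, so the word \emph{proper} is essential, is a nice addition.
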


By Proposition~\ref{prop:perfect and t-perfect}, any $\{K_4, \overline{K_4}\}$-free perfect graph is a core graph. Therefore, we focus on core graphs that are not perfect. Such a graph cannot contain an odd hole longer than seven or its complement as a proper induced subgraph.

\begin{proposition} \label{lem:odd hole constrain}
  Let $G$ be a core graph different from $C_{7}$ and $\overline{C_{7}}$.
  Every odd hole in $G$ is a $C_5$.
  Moreover, if $G$ is t-imperfect, then $G$ contains a $C_5$.
\end{proposition}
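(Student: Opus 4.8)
The plan is to reduce both assertions to the forbidden-configuration facts already on hand---Propositions~\ref{prop:perfect and t-perfect} and~\ref{lem:K4 and I4 free}---together with the facts that $\overline{C_7}$ is (minimally) t-imperfect and that $C_5$ is self-complementary. Throughout I will use that $\overline G$ is a core graph whenever $G$ is, and that $G \ne C_7, \overline{C_7}$ implies $\overline G \ne C_7, \overline{C_7}$.

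For the first assertion I would argue by contradiction. Suppose $G$ has an odd hole, that is, an induced cycle $C$ on $\ell \ge 7$ vertices; then $\overline G$ contains $\overline{C_\ell}$ as an induced subgraph. If $\ell \ge 9$, then $C_\ell$ has an independent set of size $\lfloor \ell/2 \rfloor \ge 4$, so four pairwise nonadjacent vertices of $C$ induce a $K_4$ in $\overline G$; since $|V(\overline G)| \ge \ell > 4$, this $K_4$ is a \emph{proper} induced subgraph, contradicting Proposition~\ref{lem:K4 and I4 free}. If $\ell = 7$, the preceding trick is unavailable because $\overline{C_7}$ contains no $K_4$, so instead I would use that $\overline{C_7}$ is t-imperfect and is an induced subgraph---hence a t-minor---of the core graph $\overline G$; being a core graph, $\overline G$ admits no t-imperfect proper t-minor, so $\overline{C_7}$ must be all of $\overline G$, giving $G = C_7$, which is excluded. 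Hence every odd hole of $G$ has length exactly $5$.

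For the second assertion, assume $G$ is t-imperfect. I first observe that $G$ is not perfect: by Proposition~\ref{prop:perfect and t-perfect} a $K_4$-free perfect graph is t-perfect, so a graph that is both perfect and t-imperfect contains a $K_4$, whence Proposition~\ref{lem:K4 and I4 free} forces $G = K_4$; this case lies outside our concern, since we consider core graphs that are not perfect. As $G$ is not perfect, $G$ or $\overline G$ contains an odd hole, and since both $G$ and $\overline G$ are core graphs distinct from $C_7$ and $\overline{C_7}$, the first assertion shows that any such hole is a $C_5$. Thus either $G$ contains a $C_5$ outright, or $\overline G$ does and then $G$ contains $\overline{C_5} = C_5$. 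In either case $G$ contains a $C_5$.

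I do not anticipate a real obstacle: the whole argument is bookkeeping once the two forbidden-configuration propositions are available. The one place needing genuine care is the case $\ell = 7$ of the first assertion, where the induced-$K_4$ argument breaks down and one must instead invoke the t-imperfection of $\overline{C_7}$ directly, using that induced subgraphs are t-minors; recognizing $\overline{C_7}$ (rather than some larger seven-vertex obstruction) as exactly the configuration to exploit is the only subtle point.
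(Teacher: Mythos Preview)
Your argument is correct and matches the paper's: both handle $\ell\ge 9$ via the independent set of size four in $C_\ell$ (equivalently, the $\overline{K_4}$ inside $C_\ell$) and $\ell=7$ via the t-imperfection of $\overline{C_7}$, and both obtain the second assertion from ``no $C_5$ $\Rightarrow$ perfect $\Rightarrow$ t-perfect'' (you simply run the contrapositive and make the use of $\overline{C_5}=C_5$ explicit). Your somewhat awkward dismissal of the $K_4$ corner case mirrors a point the paper's own proof glosses over silently, so it is not a defect relative to the original.
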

\begin{proof}
  For the first assertion, note that $\overline{C_{7}}$ is t-imperfect, so the only core graph that contains $C_7$ as an induced subgraph is $C_7$ itself; and for $k \geq 4$, the hole $C_{2k+1}$ contains a $\overline{K_{4}}$.
  For the second assertion, note that if $G$ does not contain a $C_5$, then $G$ is perfect, hence t-perfect by Propositions~\ref{prop:perfect and t-perfect} and \ref{lem:K4 and I4 free}.
\end{proof}

As we will see, $5$-holes are pivotal in core graphs.  First, every $C_5$ in a core graph different from $\overline{W_{5}}$ is dominating: Every other vertex is adjacent to at least two vertices on it.

\begin{lemma}\label{lem:connection of u to C}
  Let $G$ be a core graph different from $W_{5}$ and its complement. If $G$ contains a $5$-hole $C$, then for every $u \in V(G) \setminus V(C)$, either
  \begin{enumerate}[i)]
    \item $u$ has exactly two neighbors on $C$, and they are consecutive on $C$; or
    \item $u$ has exactly three neighbors on $C$, and they are not consecutive on $C$.
  \end{enumerate}
\end{lemma}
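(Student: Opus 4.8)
The plan is to analyze the possible adjacency patterns between a vertex $u\notin V(C)$ and the $5$-hole $C=v_1v_2v_3v_4v_5v_1$ by ruling out every undesirable pattern using the two forbidden configurations available: a core graph has no induced $K_4$ and no induced $\overline{K_4}$ (Proposition~\ref{lem:K4 and I4 free}), and, since $G$ is not $C_7$ or $\overline{C_7}$, every odd hole is a $C_5$ (Proposition~\ref{lem:odd hole constrain}). Let $S=N(u)\cap V(C)$; I would argue by cases on $|S|\in\{0,1,2,3,4,5\}$. The cases $|S|=5$ and $|S|=4$ are immediate: if $u$ is adjacent to all of $C$ then $G[\{u\}\cup V(C)]=W_5$, and if $u$ is adjacent to exactly four, say all but $v_3$, then $\{u,v_1,v_2\}$ together with... actually $\{v_1,u,v_4\}$ and $v_5$ — more cleanly, $\{u,v_5,v_1,v_2\}$ induces a $K_4$ minus the edge $v_5v_2$... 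I would instead note $\{u,v_1,v_2\}$ is a triangle and $\{u,v_4,v_5\}$ is a triangle sharing the edge through $u$, and $v_1v_5\in E$, $v_2v_4\notin E$; the cleanest kill is that $u,v_1,v_2,v_5$ with $v_1v_2,v_1v_5,uv_1,uv_2,uv_5$ present is $K_4$ minus $v_2v_5$, then adding that $u$ is adjacent to $v_4$ too gives $\{u,v_2,v_4,v_5\}$... I will just observe directly that four consecutive-or-not neighbors always yield a $K_4$ on $u$ plus three pairwise-adjacent cycle vertices when the missing vertex is chosen, since among any four vertices of $C_5$ some two pairs... here I would simply check: missing $v_3$ means $\{u,v_4,v_5,v_1\}$ has all edges except $v_1v_4$, so not yet; but $\{u,v_1,v_2\}\cup\{v_5\}$... the robust statement is that $C_5$ minus one vertex is $P_4$, whose endpoints are nonadjacent, so $u$ plus a $P_4$ with $u$ complete to it contains $K_4$ on $u$ and the middle two vertices of the path only if those two plus one endpoint... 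I will handle $|S|=4$ by taking the two ends $v_i,v_j$ of the $P_4$: they are nonadjacent, and with the inner vertex $v_k$ adjacent to $v_i$ we get triangle $\{u,v_i,v_k\}$; pushing on the other inner vertex similarly — in the writeup I would just exhibit the explicit $\overline{K_4}$ or $K_4$ after fixing a labelling, which is routine.

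The substantive cases are $|S|=0,1,2,3$. For $|S|=0$: $u$ together with $v_1,v_3$ and $v_1,v_4$ — actually $u,v_1,v_3$ is an independent set, and $u,v_1,v_4$ too, but these share too much; the right kill is $\{u,v_1,v_3\}$ is independent of size $3$ and adding $v_... $ — since $C$ has independence number $2$, I instead note that $\{u,v_1,v_2\}$... no, $v_1v_2$ is an edge. Here the clean argument: if $|S|\le 1$, pick two nonadjacent cycle vertices $v_i,v_j$ both non-neighbours of $u$ (possible since $|S|\le 1$ leaves at least four candidates among which two are nonadjacent), giving an independent set $\{u,v_i,v_j\}$; then extend — among the remaining cycle vertices one is nonadjacent to both $v_i,v_j$ in $C$ only if... this needs care, so the main obstacle (see below) lives here. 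For $|S|=2$ with the two neighbours nonconsecutive, say $v_1,v_3$: then $\{u,v_1,v_3\}$ is a triangle and $v_5$ is adjacent to neither (wait, $v_5v_1$ is an edge) — reconsider: $u$ adjacent to $v_1,v_3$, nonadjacent to $v_2,v_4,v_5$; then $\{u,v_2,v_4\}$? $v_2v_4\notin E$, $uv_2,uv_4\notin E$: independent triple, plus we need a fourth. In $C_5$, is there a vertex nonadjacent to both $v_2$ and $v_4$? That is $v$ with $v\ne v_1,v_3,v_5$ and $v\ne v_3,v_5,v_1$ — impossible on $C$, so I must instead use $v_2\,v_4\,u$ plus looking at a non-$C$ vertex, which we do not have a priori; the escape is that here $\{u,v_1,v_2\}$ with $uv_2\notin E$ — hmm. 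The correct reduction: when $S=\{v_1,v_3\}$, consider the $5$-cycle $u v_1 v_5 v_4 v_3 u$: its chords within $\{v_1,v_3,v_4,v_5\}$ are $v_4v_5$, $v_5v_1$, $v_3v_4$ (all present) and $v_1v_3,v_1v_4,v_3v_5$ (absent), so this is not induced; instead $u v_1 v_2 v_3 u$ is an induced $C_4$ plus ... I would package all of $|S|=2,3$ by: a neighbour set that is an independent set of size $\ge 2$ in $C$ forces $u$ with that pair plus a third cycle vertex into an induced $C_4$ or a $K_4$ or a $\overline{K_4}$; the enumeration of which cycle vertices lie in $S$ is small (there are, up to the dihedral symmetry of $C_5$, only a handful of subsets of each size), so I would fix representatives and for each write down the forbidden induced subgraph on four vertices ($K_4$, $\overline{K_4}$) or note it creates a $7$-hole or a second $5$-hole incompatible with core-ness.

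The key organizing principle, which I would state up front, is: \emph{if $u$ has two nonadjacent neighbours $v_i,v_j$ on $C$ at distance $2$, say with common $C$-neighbour $v_k$ between them and $v_a,v_b$ the other two cycle vertices, then either $uv_a,uv_b$ are both present (the nonconsecutive‑triple case (ii)) or we reach a contradiction}; and dually \emph{if all of $N(u)\cap V(C)$ forms a clique it has size $\le 2$ (else $K_4$), and size exactly $2$ means two consecutive vertices (case (i)), while size $\le 1$ is impossible}. Concretely I would (1) rule out $|S|\ge 4$ and $|S|=3$-consecutive by an explicit $K_4$; (2) rule out $|S|\le 1$ and $|S|=2$-nonconsecutive by producing an induced $\overline{K_4}$ or, where that fails, an induced $7$-hole $u v_j v_k v_a \dots$ contradicting Proposition~\ref{lem:odd hole constrain} — this is the step I expect to be fiddly, since with few neighbours one must sometimes route through a second non‑$C$ vertex, and the claim as stated is purely about $u$ and $C$, so one has to be sure the four‑vertex obstruction can always be found \emph{inside} $\{u\}\cup V(C)$; (3) observe that $|S|=2$-consecutive is exactly (i) and $|S|=3$-nonconsecutive is exactly (ii), completing the dichotomy. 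The main obstacle is therefore the low‑$|S|$ analysis: verifying that a vertex with at most one neighbour on a dominating‑looking $C_5$ really does create a $\overline{K_4}$ purely among $\{u\}\cup V(C)$ — I suspect this needs the observation that $C_5$ has independence number $2$ forces $u$ plus any two non‑neighbouring non‑neighbours plus a carefully chosen fifth‑cycle‑vertex configuration, and one checks the three essentially different placements of $S$ by hand.
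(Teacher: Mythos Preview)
Your plan has a genuine gap: the toolkit you restrict yourself to (induced $K_4$, induced $\overline{K_4}$, and Proposition~\ref{lem:odd hole constrain}) is not enough to rule out $|S|=4$ or $|S|=3$ with the three neighbours consecutive. Take $|S|=3$ consecutive, say $S=\{v_1,v_2,v_3\}$. In the six-vertex graph $G'=G[\{u\}\cup V(C)]$ every clique has size at most $3$ (since $v_1v_3\notin E$), and a short check shows every independent set has size at most $2$; the only odd hole is $uv_1v_5v_4v_3u$, a $C_5$, which is permitted. The situation for $|S|=4$ is the same: $G'$ is the fan $u+P_4$, which is $K_4$-free, $\overline{K_4}$-free, and has no long odd hole. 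Your remark that ``in the writeup I would just exhibit the explicit $\overline{K_4}$ or $K_4$ after fixing a labelling, which is routine'' is exactly where the argument breaks.

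What the paper does instead is use the full strength of the core-graph definition, which forbids t-imperfect graphs as proper \emph{t-minors}, not merely as induced subgraphs. For $|S|=4$ or $|S|=3$ consecutive, pick a non-neighbour $v$ of $u$ on $C$; its two $C$-neighbours are nonadjacent, so one may t-contract at $v$ inside $G'$, and the resulting four-vertex graph is $K_4$. This is the missing move. For the low end, rather than hunting for $\overline{K_4}$'s directly, the paper exploits self-complementarity: $\overline{C_5}\cong C_5$, so passing to $\overline G$ turns $|S|\in\{0,1\}$ into $|S|\in\{5,4\}$ and $|S|=2$ nonconsecutive into $|S|=3$ consecutive on the complementary $5$-hole, reducing to the cases already handled. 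Your sketch never invokes t-contraction and gets visibly tangled precisely where it is needed; once you allow it (and the complement trick), the whole lemma is a few lines.
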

\begin{proof}
  We consider the subgraph $G'$ of $G$ induced by $u$ and the five vertices on $C$.
  If $u$ is adjacent to all vertices on $C$, then $G'$ is a $W_5$.  Since $W_5$ is t-imperfect, $G = G'$, a contradiction.
  If $u$ is adjacent to four vertices or three consecutive vertices on $C$, then $K_{4}$ is a proper t-minor of $G'$, with $t$-contraction at a non-neighbor of $u$ on $C$.
  Noting that the complement of $C$ is a $C_5$, we end with the same contradictions on $\overline G$ if $u$ has zero or one neighbor on $C$, or its two neighbors on $C$ are not consecutive.
\end{proof}

The next proposition further stipulates the relationship between a $5$-hole and other vertices in a core graph.
\begin{proposition} \label{lem:duplicate}
  In a core graph, every pair of consecutive vertices on a $5$-hole has at most one common neighbor.
\end{proposition}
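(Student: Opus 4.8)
The plan is a short proof by contradiction. Fix a $5$-hole $C=v_1v_2v_3v_4v_5v_1$ in a core graph $G$, consider two consecutive vertices of it, say $v_1$ and $v_2$, and suppose toward a contradiction that they have two distinct common neighbors $u$ and $w$. The first, routine, observation is that neither $u$ nor $w$ can lie on $C$: of the three other vertices of $C$, the vertex $v_4$ is adjacent to neither $v_1$ nor $v_2$, the vertex $v_3$ is non-adjacent to $v_1$, and $v_5$ is non-adjacent to $v_2$. Hence $u,w\in V(G)\setminus V(C)$, and $G$ has at least the seven distinct vertices $v_1,\dots,v_5,u,w$.

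Next I would determine how $u$ attaches to $C$ (and symmetrically for $w$) using Lemma~\ref{lem:connection of u to C}. That lemma applies once the two graphs it excludes have been disposed of: if $G=W_5$, its only $5$-hole is the rim, whose consecutive vertices have the hub as their unique common neighbor; if $G=\overline{W_5}$, which is a $C_5$ plus an isolated vertex, its only $5$-hole is the $C_5$, on which consecutive vertices have no common neighbor. So assume $G\notin\{W_5,\overline{W_5}\}$. Since $u$ is adjacent to the consecutive pair $v_1,v_2$, Lemma~\ref{lem:connection of u to C} allows exactly two possibilities for $N(u)\cap V(C)$, namely $\{v_1,v_2\}$ and $\{v_1,v_2,v_4\}$ (the triples $\{v_1,v_2,v_3\}$ and $\{v_1,v_2,v_5\}$ consist of three consecutive vertices, hence are excluded). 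In either case $u$ is adjacent to neither $v_3$ nor $v_5$, and the same holds for $w$.

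The contradiction now follows from a two-case split on the adjacency of $u$ and $w$. If $uw\in E(G)$, then $\{v_1,v_2,u,w\}$ induces a $K_4$, since all of its pairs are edges. If $uw\notin E(G)$, then $\{v_3,v_5,u,w\}$ induces a $\overline{K_4}$: the pair $v_3v_5$ is a non-edge because $C$ is an induced cycle, $uw$ is a non-edge by assumption, and $u,w$ are adjacent to neither $v_3$ nor $v_5$ by the previous paragraph. Either way we obtain a $K_4$ or a $\overline{K_4}$ on four of the at least seven vertices of $G$, that is, a proper induced subgraph of $G$; this contradicts Proposition~\ref{lem:K4 and I4 free}.

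I do not expect a genuine obstacle here; the only points that need care are checking the two exceptional graphs $W_5$ and $\overline{W_5}$ (precisely those outside the scope of Lemma~\ref{lem:connection of u to C}), and making sure the enumeration of attachment types for a vertex adjacent to a consecutive pair yields exactly $\{v_1,v_2\}$ and $\{v_1,v_2,v_4\}$, so that non-adjacency to both $v_3$ and $v_5$ is guaranteed.
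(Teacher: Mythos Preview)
Your proof is correct and follows essentially the same route as the paper's: apply Lemma~\ref{lem:connection of u to C} to the two common neighbors to see they miss both vertices at distance two on the hole, then split on their mutual adjacency to produce a $K_4$ or a $\overline{K_4}$ and invoke Proposition~\ref{lem:K4 and I4 free}. You are in fact more careful than the paper, which silently skips the verification that the common neighbors lie off $C$ and the disposal of the exceptional graphs $W_5$ and $\overline{W_5}$.
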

\begin{proof}
  Let $G$ be a core graph, and let $v_{1} v_{2} v_{3} v_{4} v_{5}$ be a $5$-hole in $G$.  Suppose for contradiction that there are two vertices $x, y\in N(v_2)\cap N(v_3)$.  By Lemma~\ref{lem:connection of u to C}, neither of $x$ and $y$ is adjacent to $v_{1}$ or $v_{4}$.  But then dependent on whether they are adjacent, $x$ and $y$ either form a $K_{4}$ with $\{v_{2}, v_{3}\}$, or a $\overline{K_{4}}$ with $\{v_{1}, v_{4}\}$, both contradicting Proposition~\ref{lem:K4 and I4 free}.  The same argument applies to other edges on the $5$-cycle.
\end{proof}

As a consequence of Proposition~\ref{lem:K4 and I4 free} and Ramsey theorem, a core graph has at most 17 vertices.  Propositions~\ref{lem:connection of u to C} and \ref{lem:duplicate} together imply a tighter upper bound on those that are not perfect.

\begin{corollary} \label{cor:ten vertices}
  If a core graph contains a $C_5$, then it has at most ten vertices.
\end{corollary}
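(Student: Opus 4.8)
\emph{Proof strategy.}
The plan is to fix a $5$-hole $C = v_1v_2v_3v_4v_5$ in the core graph $G$ and to show that at most five vertices lie outside $C$; since $|V(C)| = 5$, this yields $n \le 10$. The mechanism is to assign to every vertex outside $C$ an edge of $C$ in such a way that the assignment is injective, and then simply count the five edges of $C$.

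First I would check that, by Lemma~\ref{lem:connection of u to C}, every $u \in V(G)\setminus V(C)$ has among its neighbors on $C$ a pair of \emph{consecutive} vertices: in case~i) the two neighbors of $u$ on $C$ are themselves consecutive, while in case~ii) the three neighbors of $u$ lie on the $5$-cycle $C$ without forming a path $v_{i}v_{i+1}v_{i+2}$, and an elementary check shows that any such triple contains exactly one consecutive pair (e.g., the neighborhood $\{v_2,v_4,v_5\}$ has the consecutive pair $v_4v_5$). Hence there is a well-defined map $\phi$ sending each $u \notin V(C)$ to an edge $\phi(u) = v_iv_{i+1}$ of $C$ (indices modulo $5$) both of whose ends are adjacent to $u$; equivalently, $u$ is a common neighbor of the two ends of $\phi(u)$.

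Next I would prove that $\phi$ is injective. Fix an edge $v_iv_{i+1}$ of $C$. Since $C$ is an induced $5$-hole it has no chords, so the only neighbors of $v_i$ on $C$ are $v_{i-1}$ and $v_{i+1}$, and the only neighbors of $v_{i+1}$ on $C$ are $v_i$ and $v_{i+2}$; these two sets are disjoint, so no vertex of $C$ is a common neighbor of $v_i$ and $v_{i+1}$. Thus every common neighbor of $v_i$ and $v_{i+1}$ lies outside $C$, and by Proposition~\ref{lem:duplicate} there is at most one such vertex. Consequently $|\phi^{-1}(v_iv_{i+1})| \le 1$ for each of the five edges of $C$, so $\phi$ is injective and $|V(G)\setminus V(C)| \le 5$, whence $n \le 10$. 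The whole argument is a short pigeonhole step built on Lemma~\ref{lem:connection of u to C} and Proposition~\ref{lem:duplicate}; the only place that requires a little care---and the closest thing to an obstacle---is confirming that case~ii) of Lemma~\ref{lem:connection of u to C} always supplies a consecutive pair of neighbors on $C$, so that $\phi$ is total, but this is immediate from the combinatorics of three vertices on $C_5$.
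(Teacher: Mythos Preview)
Your argument is correct and matches the paper's (implicit) proof: each vertex outside $C$ is adjacent to a unique consecutive pair on $C$ by Lemma~\ref{lem:connection of u to C}, and by Proposition~\ref{lem:duplicate} no two outside vertices can share that pair, so $|V(G)\setminus V(C)|\le 5$. The only minor omission is that Lemma~\ref{lem:connection of u to C} is stated for core graphs \emph{different from} $W_5$ and $\overline{W_5}$; you should note that these two exceptional graphs have only six vertices each and hence satisfy the bound trivially.
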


Let $G$ be a core graph that contains a $5$-hole, and we use the following notations for its vertices and edges, where the indices are always understood as modulo $5$.  We fix a $5$-hole $C$ and number its vertices as $v_{1},\ldots, v_{5}$ in order, and let $U = V(G)\setminus V(C)$.
According to Lemma~\ref{lem:connection of u to C}, each vertex in $U$ is adjacent to two consecutive vertices on $C$.
If a vertex in $U$ is adjacent to $v_{i}$ and $v_{i+1}$, $i=1,\ldots,5$, then we denote it as $u_{i+3}$; by Lemma~\ref{lem:duplicate}, this is well defined.
The five edges on $C$ are all the edges among $v_{1},\ldots, v_{5}$.  For each $u_i$, the two edges $u_i v_{i+2}$ and $u_i v_{i+3}$ must exist in $G$.
Apart from these $2 |U| + 5$ edges, by Lemma~\ref{lem:connection of u to C}, the other possible edges are among $U$ or $u_i v_i$, $i=1,\ldots,5$; they are called \emph{potential edges}.
Shown in Figures~\ref{fig:possible configurations of G}(a, b) are two \emph{pattern graphs}, from which
we can obtain different particular graphs, with different materializations of potential edges.
We use $(1324)$ to denote the graph of pattern Figure~\ref{fig:possible configurations of G}(b) in which $U$ induces a path, with edges $u_1 u_3$, $u_2 u_3$, and $u_2 u_4$.
In case that $G[U]$ is not connected, we use $\|$ to separate its components, e.g., $(14\|23)$ in Figure~\ref{fig:possible configurations of G}(d).
Moreover, we cap an index $i$ with $\circ$ to denote the presence of the edge $u_{i} v_{i}$, e.g., $(1\mathring{3}\mathring{2}4)$ in Figure~\ref{fig:possible configurations of G}(c).

\begin{figure}[h]
  \centering
  \begin{subfigure}[b]{4cm}
    \centering
    \begin{tikzpicture} [scale=0.6]
      \foreach \i in {1,..., 5} {
          \draw[normal edge] ({\i * (360 / 5) - 54}:1.25) -- ({\i * (360 / 5) + 18}:1.25);
        }
      \foreach \i in {1,..., 3} {
          \draw[normal edge] ({\i * (360 / 5) - 126}:1.25) -- ({\i * (360 / 5) - 90}:2.25) -- ({\i * (360 / 5) - 54}:1.25);
          \node[filled vertex] (u\i) at ({\i * (360 / 5) - 90}:2.25) {};
          \node at ({\i * (360 / 5) - 90}:2.75) {$u_{\i}$};
        }
      \foreach \i in {1,..., 5} {
          \node[filled vertex] (v\i) at ({\i * (360 / 5) + 90}:1.25) {};
          \node at ({\i * (360 / 5) + 90}:1.75) {$v_{\i}$};
        }

      \draw [potential edge] (u1) edge [bend right] (u3)  (u1) -- (u2) (u2) -- (u3);
      \draw [potential edge] (u1) -- (v1) (u2) -- (v2) (u3) -- (v3);
    \end{tikzpicture}
    \caption{A pattern on 8 vertices}
  \end{subfigure}
  \begin{subfigure}[b]{4cm}
    \centering
    \begin{tikzpicture} [scale = 0.55]
      \foreach \i in {1,..., 5} {
          \draw[normal edge] ({\i * (360 / 5) - 54}:1.25) -- ({\i * (360 / 5) + 18}:1.25);
        }
      \foreach \i in {1,..., 4} {
          \draw[normal edge] ({\i * (360 / 5) - 126}:1.25) -- ({\i * (360 / 5) - 90}:2.25) -- ({\i * (360 / 5) - 54}:1.25);
          \node[filled vertex] (u\i) at ({\i * (360 / 5) - 90}:2.25) {};
          \node at ({\i * (360 / 5) - 90}:2.75) {$u_{\i}$};
        }
      \foreach \i in {1,..., 5} {
          \node[filled vertex] (v\i) at ({\i * (360 / 5) + 90}:1.25) {};
          \node at ({\i * (360 / 5) + 90}:1.75) {$v_{\i}$};
        }
      \draw [potential edge] (u1) -- (u2) -- (u3) -- (u4);
      \draw [potential edge] (u1) -- (u4) (u1) -- (u3) (u2) -- (u4);
      \draw [potential edge] (u1) -- (v1) (u2) -- (v2) (u3) -- (v3) (u4) -- (v4);
    \end{tikzpicture}
    \caption{A pattern on 9 vertices}
  \end{subfigure}
  \begin{subfigure}[b]{.24\textwidth}
    \centering
    \begin{tikzpicture}[scale=0.5]
      \foreach \i in {1,..., 5} {
          \draw ({\i * (360 / 5) - 54}:1.25) -- ({\i * (360 / 5) + 18}:1.25);
        }
      \foreach \i in {1,..., 4} {
          \draw ({\i * (360 / 5) - 126}:1.25) -- ({\i * (360 / 5) - 90}:2.25) -- ({\i * (360 / 5) - 54}:1.25);
          \node[filled vertex] (u\i) at ({\i * (360 / 5) - 90}:2.25) {};
          \node at ({\i * (360 / 5) - 90}:2.75) {$u_{\i}$};
        }
      \foreach \i in {1,..., 5} {
          \node[filled vertex] (v\i) at ({\i * (360 / 5) + 90}:1.25) {};
          \node at ({\i * (360 / 5) + 90}:1.75) {$v_{\i}$};
        }
      \draw (u1) -- (u3) (u2) -- (u4);
      \draw (u2) -- (u3);
      \draw (u2) -- (v2) (u3) -- (v3);
    \end{tikzpicture}
    \caption{$(1\mathring{3}\mathring{2}4)$}
  \end{subfigure}
  \begin{subfigure}[b]{.24\textwidth}
    \centering
    \begin{tikzpicture}[scale=0.5]
      \foreach \i in {1,..., 5} {
          \draw ({\i * (360 / 5) - 54}:1.25) -- ({\i * (360 / 5) + 18}:1.25);
        }
      \foreach \i in {1,..., 4} {
          \draw ({\i * (360 / 5) - 126}:1.25) -- ({\i * (360 / 5) - 90}:2.25) -- ({\i * (360 / 5) - 54}:1.25);
          \node[filled vertex] (u\i) at ({\i * (360 / 5) - 90}:2.25) {};
          \node at ({\i * (360 / 5) - 90}:2.75) {$u_{\i}$};
        }
      \foreach \i in {1,..., 5} {
          \node[filled vertex] (v\i) at ({\i * (360 / 5) + 90}:1.25) {};
          \node at ({\i * (360 / 5) + 90}:1.75) {$v_{\i}$};
        }
      \draw (u1) -- (u4);
      \draw (u2) -- (u3);
    \end{tikzpicture}
    \caption{$(14\|23)$}
  \end{subfigure}
  \caption{Two patterns (a, b) and two particular graphs (c, d) of the second pattern.  In the patterns, potential edges are depicted as thin green lines, while normal ones as thick black lines; no other edges can exist.}
  \label{fig:possible configurations of G}
\end{figure}
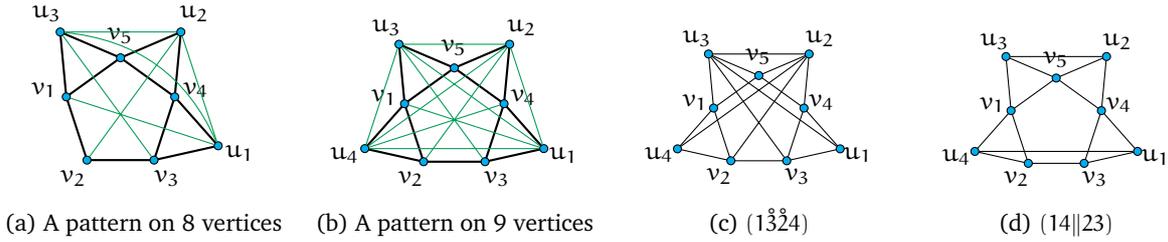

The proof of the following proposition is given in the Appendix.

\begin{proposition}\label{lem:t-perfect-graphs}
  The following graphs are t-perfect: $(12)$, $(1\|\mathring{2})$, $(1\mathring{2})$, $(\mathring{1}\|\mathring{2})$, $(\mathring{1}\mathring{2})$, $(1\|23)$, $(\mathring{3}1\mathring{2})$, $(\mathring{1}\mathring{3}\|\mathring{2})$, $(1\|\mathring{2}4)$, $(14\mathring{2})$, $(1\|\mathring{2}\mathring{4})$, $(1\mathring{4}\mathring{2})$, $(\mathring{1}\|\mathring{2}\|\mathring{4})$, $(\mathring{1}\mathring{2}4)$, $(\mathring{1}\mathring{2}\mathring{4})$, $(1\mathring{3}\mathring{4}\mathring{2})$, $(1\mathring{3}\mathring{4}2)$, $(\mathring{1}\mathring{3}\mathring{4}2)$, $(\mathring{1}\mathring{2}\mathring{4}3)$, $(2\mathring{3}14)$, $(2\mathring{3}\mathring{1}4)$, $(23\mathring{1}4)$, $(1\mathring{4}32)$, $(\mathring{1}\mathring{2}\mathring{4}\mathring{3}\mathring{1})$, $(1\mathring{2}\mathring{4}3)$, $(1\mathring{3}\mathring{2}41)$, $(1\mathring{3}\mathring{2}4)$, $(14\|23)$, $(1\mathring{4}\mathring{3}2)$, $(1\mathring{3}\|\mathring{2}4)$, and $(\mathring{2}41\mathring{3})$.
\end{proposition}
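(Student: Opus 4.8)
The plan is to verify t-perfection for the thirty-one graphs on the list one at a time. Each of them has at most nine vertices; by the setup preceding Figure~\ref{fig:possible configurations of G} each is built from the fixed $5$-hole $v_{1}\cdots v_{5}$ by attaching the vertices $u_{i}$ to consecutive pairs, so in particular each is $\{K_{4},\overline{K_{4}}\}$-free, and by Proposition~\ref{lem:odd hole constrain} the only odd holes it can contain are $5$-holes. I would split the list into an ``easy'' group, each member of which carries a ready-made certificate of t-perfection, and a small ``hard'' group, each member of which is settled by a direct argument that $P(G)$ is integral.

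For the easy group I would exhibit, for each graph $G$, one of the following: (a) a vertex $w$ with $G-w$ bipartite, whence $G$ is t-perfect because almost-bipartite graphs are~\cite{fonlupt-82-h-perfectness}; (b) a short verification that $G$ has no $K_{4}$-minor, whence $G$ is series-parallel, hence t-perfect~\cite{boulala-79-series-parallel}; or (c) a short verification that $G$ contains no odd-$K_{4}$, whence $G$ is t-perfect by Gerards's theorem~\cite{gerards-89-stable-sets}. The bookkeeping is light: deleting a pentagon vertex $v_{i}$ already destroys the $5$-hole together with the two triangles through $v_{i}$, so for (a) one only checks that none of the materialized potential edges creates a further odd cycle; and when $G[U]$ is disconnected or almost edgeless (as for $(12)$, $(1\|23)$, $(14\|23)$) the graph is sparse enough that (b) or (c) is immediate. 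This disposes of the majority of the list.

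The hard group consists of the denser graphs in which $G[U]$ contains a cycle and several of the edges $u_{i}v_{i}$ are present, for instance $(\mathring{1}\mathring{2}\mathring{4}\mathring{3}\mathring{1})$ and $(\mathring{1}\mathring{3}\mathring{4}2)$. For these none of (a)--(c) applies: such a graph is not almost bipartite, and since it contains a diamond joined by an internally disjoint path of length three it actually contains an odd-$K_{4}$, so Gerards's theorem is silent. For each of these I would prove directly that every vertex $x$ of $P(G)$ is a $0/1$ vector. The key point is that, $G$ having no odd hole other than a $5$-hole, the inequalities of $P(G)$ that can be tight at $x$ are only the bounds $x_{v}\in\{0,1\}$, the edge inequalities, the triangle inequalities $x_{a}+x_{b}+x_{c}\le 1$ (the $3$-cycle case of the odd-cycle inequalities, which in a $K_{4}$-free graph captures every clique), and the $5$-hole inequalities, of which there are only a handful. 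One then argues, by a short case distinction on which $5$-hole constraints are tight at $x$, that the tight system already forces $x$ to be integral; equivalently, one produces for every weight vector an optimal stable set together with a dual cover of equal value using only edge-, triangle-, and $5$-hole-inequalities.

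The crux is this hard group. There is no shortcut through a smaller graph, since passing to a t-minor cannot certify t-perfection of the original; so each such $G$ must be handled on its own, and the overlap of triangle and $5$-hole inequalities through common short cycles makes the case analysis delicate. I would keep the number of cases down by exploiting the dihedral symmetry of the $5$-hole, treat the few genuinely irreducible graphs explicitly, and cross-check the conclusions computationally, which is entirely feasible at nine vertices.
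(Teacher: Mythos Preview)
Your plan for the first five graphs (the seven-vertex ones) matches the paper: they are almost bipartite, and you could push the same certificate through a few more of the eight-vertex graphs. But from that point on your approach diverges from the paper's, and one remark in your proposal is simply wrong.

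You write that for the dense graphs ``there is no shortcut through a smaller graph, since passing to a t-minor cannot certify t-perfection of the original.'' That is precisely the shortcut the paper takes. The relevant tool is a lemma of Benchetrit~\cite{benchetrit-15-thesis}: if $K$ is a clique of $G$ and $G-v$ is t-perfect for every $v\in K$, then $G$ itself is t-perfect. So one does reduce to smaller graphs---not to a single t-minor, but to a triple of vertex-deleted subgraphs indexed by a triangle. The paper's proof exploits this systematically: for each of the twenty-six graphs beyond the almost-bipartite base cases it names a triangle $K=\{a,b,c\}$ (typically $\{v_{i},v_{i+1},u_{i-2}\}$) and checks that each $G-a$, $G-b$, $G-c$ is either $K_4$-free perfect or isomorphic to a graph already handled earlier on the list. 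The entire argument is then a short table, and no direct polytope computation is ever needed.

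By contrast, your ``hard group'' plan---arguing integrality of $P(G)$ by casing on which $5$-hole inequalities are tight---is in principle doable on nine vertices, but it is exactly the kind of ad hoc vertex-by-vertex analysis the clique lemma is designed to avoid. You also understate the difficulty of your ``easy'' certificates: several of the eight- and nine-vertex graphs on the list (for instance $(\mathring{1}\mathring{2}\mathring{4})$ or $(1\mathring{4}\mathring{3}2)$) do contain an odd-$K_4$, so neither the series-parallel nor the Gerards criterion applies, and they would fall into your hard group too. The clique-lemma induction handles them uniformly in one line each.
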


As easy consequences of Lemma~\ref{lem:connection of u to C}, we have the following observations on core graphs.  Here $i = 1, \ldots, 5$.

\begin{enumerate}[Obs.1)]
  \item\label{ob:1} If both $u_{i} v_{i}$ and $u_{i+1} u_{i+2}$ are in $E(G)$, then at least one of $u_{i} u_{i+1}$ and $u_{i} u_{i+2}$ is in $E(G)$; otherwise, $v_{i+4}$ has four neighbors on the $5$-cycle $u_{i} v_{i} u_{i+2} u_{i+1} v_{i+3}$.
        By symmetry, if both $u_{i} v_{i}$ and $u_{i-1} u_{i-2}$ are in $E(G)$, then at least one of $u_{i} u_{i-2}$ and $u_{i} u_{i-1}$ is in $E(G)$.
  \item\label{ob:3} If both $u_{i} u_{i+1}$ and $u_{i} u_{i+3}$ are in $E(G)$, then at least one of $u_{i} v_{i}$ and $u_{i+1} u_{i+3}$ is in $E(G)$; otherwise, $v_{i+3}$ has three consecutive neighbors on the $5$-cycle $u_{i} u_{i+1} v_{i+4} v_{i} u_{i+3}$.
        By symmetry, if both $u_{i} u_{i-1}$ and $u_{i} u_{i-3}$ are in $E(G)$, then at least one of $u_{i} v_{i}$ and $u_{i-1} u_{i-3}$ is in $E(G)$.
  \item\label{ob:5} Suppose, all of $u_{i-2} u_{i-1}$, $u_{i-1} u_{i+1}$, and $u_{i+1} u_{i+2}$ are in $E(G)$.  If $u_{i-1} v_{i-1}$ or $u_{i+1} v_{i+1}$ is in $E(G)$, then at least one of $u_{i-1} u_{i+2}$, $u_{i-2} u_{i+1}$, and $u_{i-2} u_{i+2}$ is in $E(G)$; otherwise, $v_{i-1}$ or $v_{i+1}$ has four neighbors on the $5$-cycle $u_{i-2} u_{i-1} u_{i+1} u_{i+2} v_{i}$.
  \item\label{ob:7}
    If  $u_{i-1} u_{i+1}\in E(G)$ and $u_{i-1} v_{i-1}, u_{i+1} v_{i+1}\not\in E(G)$, then
  $u_{i+1} u_{i+2}, u_{i-1} u_{i-2}\not\in E(G)$, and
 $u_{i} u_{i-1}, u_{i} u_{i+1}\in E(G)$; otherwise, the neighborhood of $u_{i-2}$, $u_{i+2}$, or, respectively, $u_{i}$ on the $5$-cycle $u_{i-1} u_{i+1} v_{i-1} v_{i} v_{i+1}$ does not satisfy Lemma~\ref{lem:connection of u to C}.

  \item\label{ob:10} If $u_{i} u_{i+1}\not\in E(G)$ and at least one of $u_{i}$ and $u_{i+1}$ is adjacent to $u_{i+3}$, then at most one of $u_{i} v_{i}$ and $u_{i+1} v_{i+1}$ can be in $E(G)$; otherwise, $u_{i+3}$ has three consecutive neighbors on the $5$-cycle $u_{i} v_{i} v_{i+1} u_{i+1} v_{i+3}$.

  \item\label{ob:12} If $u_{i+1} v_{i+1}$ is in $E(G)$ and none of $u_{i+1} u_{i+2}$, $u_{i+2} u_{i-2}$, and $u_{i-1} u_{i-2}$ is in $E(G)$, then $u_{i+1} u_{i-2}$, $u_{i+2} u_{i-1}$, and $u_{i+1} u_{i-1}$ cannot be all present in $G$; otherwise, $v_{i+1}$ has four neighbors on the $5$-cycle $u_{i-1} u_{i+2} v_{i} u_{i-2} u_{i+1}$.
        By symmetry, if $u_{i-1} v_{i-1}$ is in $E(G)$ and none of $u_{i+1} u_{i+2}$, $u_{i+2} u_{i-2}$, and $u_{i-1} u_{i-2}$ is in $E(G)$, then $u_{i+1} u_{i-2}$, $u_{i+2} u_{i-1}$, and $u_{i+1} u_{i-1}$ cannot be all present in $G$.
\end{enumerate}

All graphs of pattern Figure~\ref{fig:possible configurations of G}(a) are summarized in Table \ref{table:1} and characterized in Lemma~\ref{lem:t-perfect-order-8}.

\begin{table}[h]
  \centering
  \caption{Graphs of pattern Figure~\ref{fig:possible configurations of G}(a).  The columns are for combinations of edges among $U$; the cases with only $u_2 u_3$ and only $\{u_1 u_3, u_2 u_3\}$ are omitted because they are symmetric to respectively, $u_1 u_2$ and $\{u_1 u_2, u_1 u_3\}$.  The rows are possible combinations of edges between $U$ and $C$.  The invocation of an observation means that this configuration violates this observation.}
  \label{table:1}
  \begin{tabular}{ l | c c c c c}
    \toprule
                           & all                                                                 & $\{u_1 u_2, u_1 u_3\}$        & $\{u_1 u_2, u_2 u_3\}$        & $\{u_1 u_2\}$         & $\{u_1 u_3\}$
    \\
    \midrule
    all                    & $d(u_1) = 5$                                                        & $d(u_1) = 5$                  & $d(u_2) = 5$                  & Obs.\ref{ob:1} ($i=3$) & $(\mathring{1}\mathring{3} \|\mathring{2})$
    \\ [1ex]
    $\{u_1 v_1, u_2 v_2\}$ & $d(u_1) = 5$                                                        & $d(u_1) = 5$                  & $d(u_2) = 5$                  & $d(u_3) = 2$          & $(\mathring{1}3\|\mathring{2})$
    \\ [1ex]
    $\{u_1 v_1, u_3 v_3\}$ & $d(u_1) = 5$                                                        & $d(u_1) = 5$                  & $(\mathring{1}2\mathring{3})$ & Obs.\ref{ob:1} ($i=3$) & $d(u_2) = 2$
    \\ [1ex]
    $\{u_2 v_2, u_3 v_3\}$ & $d(u_2) = 5$                                                        & $(\mathring{3}1\mathring{2})$ & $d(u_2) = 5$                  & Obs.\ref{ob:1} ($i=3$) & $\cong (\mathring{1}3\|\mathring{2})$
    \\ [1ex]
    $\{u_1 v_1\}$          & $d(u_1) = 5$                                                        & $d(u_1) = 5$                  & $(\mathring{1}23)$            & $d(u_3) = 2$          & $d(u_2) = 2$
    \\ [1ex]
    $\{u_2 v_2\}$          & $d(u_2) = 5$                                                        & Obs.\ref{ob:7} ($i = 2$)       & $d(u_2) = 5$                  & $d(u_3) = 2$          & Obs.\ref{ob:7} ($i = 2$)
    \\ [1ex]
    $\{u_3 v_3\}$          & $d(u_3) = 5$                                                        & $(\mathring{3}12)$            & $\cong (\mathring{1}23)$      & Obs.\ref{ob:1} ($i=3$) & $d(u_2) = 2$
    \\ [1ex]
    none                   & $\overline{G} \cong (\mathring{1}  \|\mathring{2}  \|\mathring{4})$ & Obs.\ref{ob:7} ($i = 2$)       & $(123)$                       & $d(u_3) = 2$          & $d(u_2) = 2$
    \\
    \bottomrule
  \end{tabular}
\end{table}

\begin{lemma}\label{lem:t-perfect-order-8}
  Let $G$ be a core graph of order eight.  At least one of $G$ and $\overline{G}$
  \begin{enumerate}[i)]
    \item is t-perfect; or
    \item has a degree-2 vertex in $U$.
  \end{enumerate}
\end{lemma}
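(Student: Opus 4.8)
The plan is to separate off the trivial case and then reduce everything to the single pattern of Figure~\ref{fig:possible configurations of G}(a), which is tabulated in Table~\ref{table:1}. First, if $G$ is t-perfect then conclusion~(i) holds and we are done, so I would assume $G$ is t-imperfect. Since $G$ is a core graph of order $8$ it is neither $C_7$ nor $\overline{C_7}$, so Proposition~\ref{lem:odd hole constrain} hands us a $5$-hole $C$, and then $U=V(G)\setminus V(C)$ has exactly three vertices. By Lemma~\ref{lem:connection of u to C} and Proposition~\ref{lem:duplicate} these three vertices sit in three distinct slots $u_a,u_b,u_c$ among $u_1,\dots,u_5$, and up to the dihedral symmetry of $C$ the three slots are either consecutive, $U=\{u_1,u_2,u_3\}$ (the pattern of Figure~\ref{fig:possible configurations of G}(a)), or of the form $U=\{u_1,u_2,u_4\}$; in either case the only non-forced edges are potential edges, i.e.\ a subset of the edges inside $U$ together with a subset of $\{u_iv_i:u_i\in U\}$.

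Next I would handle the configuration $U=\{u_1,u_2,u_4\}$ by passing to the complement. The graph $\overline G$ is again a core graph of order $8$, and because $\overline{C_5}\cong C_5$ the vertices of $C$ induce a $5$-hole $\overline C=v_1v_3v_5v_2v_4$ in $\overline G$. The key observation is that, regardless of which potential edges are present, each $u_j$ of $G$ relative to $C$ occupies a fixed slot of $\overline G$ relative to $\overline C$, with only the presence of the optional edge $u_jv_j$ toggled; tracing this correspondence, $\{u_1,u_2,u_4\}$ is carried to three consecutive slots, so $\overline G$ has the pattern of Figure~\ref{fig:possible configurations of G}(a). As the conclusion of the lemma is symmetric in $G$ and $\overline G$, it then suffices to prove the lemma for core graphs of that pattern.

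For the pattern $U=\{u_1,u_2,u_3\}$ I would run the case analysis recorded in Table~\ref{table:1}. If no potential edge lies inside $U$, then any $u_i$ with $u_iv_i\notin E(G)$ has degree $2$ and we get~(ii); the single leftover case, all of $u_1v_1,u_2v_2,u_3v_3$ present, can be settled directly. Otherwise, up to the reflection $u_1\leftrightarrow u_3$ the edges inside $U$ fall into the five families indexing the columns of Table~\ref{table:1} (``only $u_2u_3$'' and ``$\{u_1u_3,u_2u_3\}$'' being symmetric to ``$\{u_1u_2\}$'' and ``$\{u_1u_2,u_1u_3\}$''), and the rows run over the eight subsets of $\{u_1v_1,u_2v_2,u_3v_3\}$. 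For each cell one argues one of two things: either the configuration cannot occur in a core graph, because some vertex would acquire four neighbours, or three consecutive neighbours, on an induced $5$-cycle, contradicting Lemma~\ref{lem:connection of u to C} (the entries $d(u_i)=5$, where $u_i$ is adjacent to all of $C$ and forces an induced $W_5$, and the entries invoking Obs.\ref{ob:1} or Obs.\ref{ob:7}); or the configuration does occur, and then the resulting graph is t-perfect by Proposition~\ref{lem:t-perfect-graphs}, or its complement is among the graphs listed there, or it has a vertex $u_i$ with $d(u_i)=2$ (the entries $d(u_i)=2$), or it is isomorphic to a graph already treated. Going through the table cell by cell then yields the two alternatives of the lemma in all cases.

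I expect the bulk of the work --- and the only real obstacle --- to be filling in Table~\ref{table:1}: for each of the roughly forty configurations one has to either pin down the vertex that is forced to have too many neighbours on a suitable $5$-cycle (to invoke Lemma~\ref{lem:connection of u to C} or one of the observations), or correctly identify the resulting graph, up to isomorphism and complementation, with a member of the t-perfect list of Proposition~\ref{lem:t-perfect-graphs}. The delicate points are the symmetry bookkeeping (which rows and columns collapse) and, for the reduction above, the slot correspondence under complementation; the actual t-perfection certificates are deferred to the Appendix via Proposition~\ref{lem:t-perfect-graphs}.
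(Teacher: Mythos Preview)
Your overall plan matches the paper's: reduce to the consecutive pattern of Figure~\ref{fig:possible configurations of G}(a) and then run the case analysis of Table~\ref{table:1}. Your explicit handling of the non-consecutive slot configuration $\{u_1,u_2,u_4\}$ via complementation is correct (and the paper uses exactly this observation later, in the order-$8$ part of the proof of Theorem~\ref{thm:self-complement}), and you are right that the ``no edges inside $U$'' column is not in Table~\ref{table:1} and needs a separate remark.

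However, you misread the entries marked ``$d(u_i)=5$'' in Table~\ref{table:1}. You classify them as configurations that \emph{cannot occur} in a core graph, on the grounds that ``$u_i$ is adjacent to all of $C$ and forces an induced $W_5$''. That is false: in the pattern of Figure~\ref{fig:possible configurations of G}(a), each $u_i$ has at most three neighbours on $C$ (the two mandatory ones plus possibly $v_i$); the remaining adjacencies contributing to $d(u_i)=5$ lie inside $U$. These configurations \emph{do} occur as core graphs. The correct reading --- and this is precisely the paper's opening sentence in the proof --- is that a vertex of degree $5$ in an $8$-vertex graph has degree $2$ in the complement, so $\overline{G}$ has a degree-$2$ vertex in its $U$, giving conclusion~(ii) for $\overline{G}$. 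So the entries ``$d(u_i)=5$'' belong in your category~(B) (alongside the ``$d(u_i)=2$'' entries), not in your category~(A) of forbidden configurations. Once you reclassify them, your table walkthrough goes through and coincides with the paper's argument.
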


\begin{proof}
  Note that if the degree of a vertex is five in $G$, then its degree in $\overline G$ is two.
  According to Table~\ref{table:1}, it suffices to show that graphs $(\mathring{1}\mathring{3}\|\mathring{2})$, $(\mathring{1}3\|\mathring{2})$, $(\mathring{1}2\mathring{3})$, $(\mathring{3}1\mathring{2})$, $(\mathring{1}23)$, $(\mathring{3}12)$, $(\mathring{1}\|\mathring{2}\|\mathring{4})$, and $(123)$ are t-perfect.
  We have seen in Proposition~\ref{lem:t-perfect-graphs} that $(\mathring{1}\|\mathring{2}\|\mathring{4})$, $(\mathring{3}1\mathring{2})$, and $(\mathring{1}\mathring{3}\|\mathring{2})$ are t-perfect.  The graph $(\mathring{1}3\|\mathring{2})$ is t-perfect because $(\mathring{1}3\|\mathring{2})$ is isomorphic to $(\mathring{2}41\mathring{3}) - u_{1}$, and $(\mathring{2}41\mathring{3})$ is t-perfect. On the other hand, $(\mathring{3}12)$, $(\mathring{1}2\mathring{3})$, $(\mathring{1}23)$, and $(123)$ are isomorphic to, respectively, $(1\mathring{2}43\mathring{5}1)-\{u_{1}, u_{2}\}$, $(1\mathring{2}43\mathring{5}1)-\{u_{3}, u_{4}\}$, $(1\mathring{2}3451)-\{u_{1}, u_{5}\}$, and $(123451)-\{u_{1}, u_{5}\}$, all t-perfect.
\end{proof}

\section{Degree-bounded core graphs of order nine}
\label{sec:graph-with-pattern-g}

According to Propositions~\ref{lem:connection of u to C} and \ref{lem:duplicate}, every core graph of order nine is of the pattern in Figure~\ref{fig:possible configurations of G}(b).  Throughout this section, let $G$ denote a core graph of order nine where the degree of every vertex is between three and five.  (The reason of imposing degree constraints will become clear shortly.)  We consider whether edges $u_{i} u_{i+1}$, $i = 1,2,3$ are present in $G$.

\begin{proposition}\label{prop:contains-all-outer-edges}
  Let $G$ be a degree-bounded core graph on nine vertices.  If for all $i = 1,2,3$, the edge $u_i u_{i+1}$ is in $E(G)$, then $G$ is an induced subgraph of a $(3, 3)$-partitionable graph.
\end{proposition}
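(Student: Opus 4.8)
The plan is to pin down first the subgraph $G[U]$ and then the four potential caps $u_1v_1,\dots,u_4v_4$, and for each graph that survives to exhibit an induced embedding into one of the ten $(3,3)$-partitionable graphs of Figure~\ref{fig: D graphs}. Once the hypothesis fixes the path $u_1u_2u_3u_4$, the only edges of $G$ not yet decided are the three chords $u_1u_3$, $u_1u_4$, $u_2u_4$ of $G[U]$ together with the four caps; moreover $G$ admits the reflection $\sigma$ exchanging $u_1\leftrightarrow u_4$, $u_2\leftrightarrow u_3$, $v_1\leftrightarrow v_4$, $v_2\leftrightarrow v_3$ and fixing $v_5$, so it suffices to argue up to $\sigma$.

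First I would show that $G[U]$ is $P_4$ or $C_4$. Since $G$ is $K_4$-free (Proposition~\ref{lem:K4 and I4 free}), $G[U]\neq K_4$, so at least one chord is absent; up to $\sigma$ the possibilities for $G[U]$ are $P_4$ (no chord), the paw $P_4+u_1u_3$, the $4$-cycle $C_4=P_4+u_1u_4$, and $K_4$ with one chord deleted (two cases up to $\sigma$, namely $K_4-u_1u_4$ and $K_4-u_1u_3$). For each of the three types paw, $K_4-u_1u_4$, $K_4-u_1u_3$, Observation~\ref{ob:3}, applied to a chord of $G[U]$ in the role of $u_iu_{i+3}$ (or, in its symmetric form, of $u_iu_{i-3}$), forces a cap $u_jv_j$ into $E(G)$ at a vertex $u_j$ that already has five neighbours, so $d(u_j)=6$, contradicting the degree hypothesis; for instance in the paw, Observation~\ref{ob:3} with $i=3$ forces $u_3v_3$ and then $d(u_3)=6$, and the remaining cases are settled the same way (using $\sigma$ for the paw $P_4+u_2u_4$ and for $K_4-u_2u_4$). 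Observations~\ref{ob:5} and \ref{ob:7} also require a chord in their hypotheses, but they turn out to be vacuous or automatically satisfied throughout.

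Next come the embeddings. If $G[U]=P_4$ then no observation restricts the caps, so all $2^4$ cap-patterns — ten up to $\sigma$ — must be handled, and each is shown to be an induced subgraph of a $(3,3)$-partitionable graph by a single-vertex deletion from $(123451)$, $(1\mathring{2}3451)$, $(\mathring{1}2\mathring{3}45\mathring{1})$, $(123\mathring{4}\mathring{5}1)$, $(1\mathring{2}43\mathring{5}1)$ or one of their complements, followed by the relabelling that moves the missing index to $5$: deleting $u_5$ recovers the patterns whose caps already sit in canonical position (for example $(123451)-u_5$ is capless and $(\mathring{1}\mathring{2}\mathring{3}\mathring{4}\mathring{5}\mathring{1})-u_5$ carries all four caps), and deleting another $u_i$, or a $v_j$, yields the rest. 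If $G[U]=C_4$, then Observation~\ref{ob:3} applied twice to the chord $u_1u_4$ (once as $u_iu_{i+3}$ with $i=1$, once as $u_iu_{i-3}$ with $i=4$) forces both $u_1v_1$ and $u_4v_4$ into $E(G)$, while the degree hypothesis now permits either value of $u_2v_2$ and of $u_3v_3$; this leaves only three graphs up to $\sigma$, each again an induced subgraph of a $(3,3)$-partitionable graph, but the identification is less routine here because the $5$-hole of the host that plays the role of $C$ is not its canonical one.

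The main obstacle is precisely this bookkeeping: after the pruning one is left with thirteen graphs (ten from the $P_4$ case, three from the $C_4$ case), and for each one must name the correct host among the ten $(3,3)$-partitionable graphs, specify the vertex to delete together with (in the $C_4$ case) the $5$-hole of the host serving as $C$, and then verify adjacency. Verifying adjacency is mechanical because $G$ and every candidate host are written in the same $u_\bullet/v_\bullet$ notation, but keeping straight which $5$-hole of the host is used, and hence how $U$ is relabelled, takes care, most of all in the $C_4$ case and in the denser $P_4$ patterns, where the deleted vertex is $v$-type. The symmetry $\sigma$, the degree hypothesis, and Observation~\ref{ob:3} keep the residual list short, so this remaining verification, while tedious, is finite and elementary.
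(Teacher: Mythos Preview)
Your reduction of $G[U]$ to $P_4$ or $C_4$ is fine, and the $P_4$ case matches the paper's treatment. The gap is in the $C_4$ case: those three graphs are \emph{not} core graphs, so the embeddings you promise do not exist.

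Concretely, once $G[U]=C_4$ and you have forced $u_1v_1,u_4v_4\in E(G)$ via Observation~\ref{ob:3}, look at $G-\{v_2,v_3\}$. Its seven vertices are $v_1,v_4,v_5,u_1,u_2,u_3,u_4$, and the non-edges among them form the $7$-cycle $v_1\,v_4\,u_3\,u_1\,v_5\,u_4\,u_2\,v_1$ (this is independent of whether $u_2v_2$ or $u_3v_3$ is present, since those edges involve the deleted vertices). Hence $G-\{v_2,v_3\}\cong\overline{C_7}$, which is minimally t-imperfect; so $G$ has a t-imperfect proper t-minor and is not a core graph. Equivalently, since every proper induced subgraph of a $(3,3)$-partitionable graph is t-perfect while your three $C_4$-graphs contain the t-imperfect $\overline{C_7}$, no such host can receive them. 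The ``less routine'' identification you anticipate would therefore never terminate successfully.

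The paper's proof uses exactly this $\overline{C_7}$ observation to eliminate $u_1u_4$ at the outset, reducing directly to $G[U]=P_4$ and the ten cap-patterns you already handle.
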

\begin{proof}
  We argue first that none of $u_{1} u_{4}$, $u_{1} u_{3}$, and $u_{2} u_{4}$ can be present in $G$; i.e., $u_1 u_2 u_3 u_4$ is an induced path in $G$.  Suppose that $u_{1} u_{4} \in E(G)$, then by Obs.\ref{ob:7} (with $i = 5$), at least one of $u_{4} v_{4}$ and $u_{1} v_{1}$ is in $E(G)$.  We may assume that $u_{4} v_{4} \in E(G)$, and the other case is symmetric.  Since $\{u_{1}, u_{4}, u_{2}, v_{4}\}$ is not a clique, $u_{2} u_{4} \notin E(G)$.  By Obs.\ref{ob:3} (with $i=1$), $u_{1} v_{1} \in E(G)$, and then since $\{u_{1}, u_{3}, u_{4}, v_{1}\}$ is not a clique, $u_{1} u_{3}$ cannot be present.
  But then $G- \{v_{2}, v_{3}\}$ is isomorphic to $\overline{C_7}$, a contradiction.  Thus, $u_{1} u_{4} \notin E(G)$.
  By Obs.\ref{ob:3} (with $i=2$), (noting $u_{1} u_{2}\in E(G)$,) the presence of $u_{2} u_{4}$ would imply the presence of $u_{2} v_{2}$, but then $d(u_{2}) = 6$.
  Thus,
  $u_{2} u_{4} \notin E(G)$, and by a symmetric argument, $u_{1} u_{3} \notin E(G)$.

  Now that none of $u_{1} u_{4}$, $u_{1} u_{3}$, and $u_{2} u_{4}$ is present, we consider all possible combinations of edges $\{u_{i} v_{i}\mid i = 1,\dots,4\}\cap E(G)$.  If none of them is in $E(G)$, then $G$ is isomorphic to $(123451) - u_{1}$.  If all of them are in $E(G)$, then $G$ is isomorphic to $(\mathring{1}\mathring{2}\mathring{3}\mathring{4}\mathring{5}\mathring{1}) - u_{1}$.  If only one $u_i v_i$ is in $E(G)$, then $G$ is isomorphic to $(\mathring{1}2\mathring{3}45\mathring{1}) - u_{3}$ or $(123\mathring{4}\mathring{5}1) - u_{4}$.  If only one $u_i v_i$ is absent, then $G$ is isomorphic to $(\mathring{1}\mathring{2}\mathring{3}45\mathring{1}) - u_{4}$ or $(1\mathring{2}3\mathring{4}\mathring{5}1) - u_{1}$.  Otherwise, exact two of edges $u_i v_i$ are in $E(G)$, then $G$ is isomorphic to one of $(\mathring{1}2\mathring{3}45\mathring{1}) - u_{4}$, $(\mathring{1}\mathring{2}\mathring{3}45\mathring{1}) - u_{1}$, $(\mathring{1}2\mathring{3}45\mathring{1}) - u_{2}$, and $(123\mathring{4}\mathring{5}1) - u_{2}$.
\end{proof}

In the rest, for at least one of $i = 1, 2, 3$, the edge $u_{i}u_{i+1}$ is absent from $G$.
In the second case, we assume that both $u_{1}u_{2}$ and $u_{2}u_{3}$ are absent from $G$; see Figure~\ref{fig:two-missing-nine-vertices}(b).

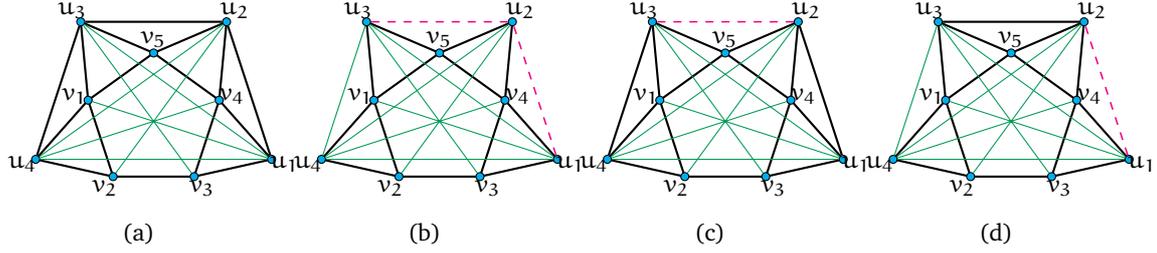
\begin{figure} [h]
  \centering
  \begin{subfigure}[b]{.23\linewidth}
    \centering
    \begin{tikzpicture} [scale = 0.725]
      \foreach \i in {1,..., 5} {
          \draw[normal edge] ({\i * (360 / 5) - 54}:1.25) -- ({\i * (360 / 5) + 18}:1.25);
        }
      \foreach \i in {1,..., 4} {
          \draw[normal edge] ({\i * (360 / 5) - 126}:1.25) -- ({\i * (360 / 5) - 90}:2.25) -- ({\i * (360 / 5) - 54}:1.25);
          \node[filled vertex] (u\i) at ({\i * (360 / 5) - 90}:2.25) {};
          \node at ({\i * (360 / 5) - 90}:2.5) {$u_{\i}$};
        }
      \foreach \i in {1,..., 5} {
          \node[filled vertex] (v\i) at ({\i * (360 / 5) + 90}:1.25) {};
          \node at ({\i * (360 / 5) + 90}:1.5) {$v_{\i}$};
        }
      \draw [normal edge] (u1) -- (u2) -- (u3) -- (u4);
      \draw [potential edge] (u1) -- (u4) (u1) -- (u3) (u2) -- (u4);
      \draw [potential edge] (u1) -- (v1) (u2) -- (v2) (u3) -- (v3) (u4) -- (v4);
    \end{tikzpicture}
    \caption{}
  \end{subfigure}
  \begin{subfigure}[b]{.23\linewidth}
    \centering
    \begin{tikzpicture} [scale = 0.725]
      \foreach \i in {1,..., 5} {
          \draw[normal edge] ({\i * (360 / 5) - 54}:1.25) -- ({\i * (360 / 5) + 18}:1.25);
        }
      \foreach \i in {1,..., 4} {
          \draw[normal edge] ({\i * (360 / 5) - 126}:1.25) -- ({\i * (360 / 5) - 90}:2.25) -- ({\i * (360 / 5) - 54}:1.25);
          \node[filled vertex] (u\i) at ({\i * (360 / 5) - 90}:2.25) {};
          \node at ({\i * (360 / 5) - 90}:2.5) {$u_{\i}$};
        }
      \foreach \i in {1,..., 5} {
          \node[filled vertex] (v\i) at ({\i * (360 / 5) + 90}:1.25) {};
          \node at ({\i * (360 / 5) + 90}:1.5) {$v_{\i}$};
        }
      \draw [forbidden edge] (u1) -- (u2) (u2)-- (u3);
      \draw [potential edge] (u1) -- (u3)-- (u4);
      \draw [potential edge] (u1) -- (u4) (u2) -- (u4);
      \draw [potential edge] (u1) -- (v1) (u2) -- (v2) (u3) -- (v3) (u4) -- (v4);
    \end{tikzpicture}
    \caption{}
  \end{subfigure}
  \begin{subfigure}[b]{.23\linewidth}
    \centering
    \begin{tikzpicture}[scale = 0.725]
      \foreach \i in {1,..., 5} {
          \draw[normal edge] ({\i * (360 / 5) - 54}:1.25) -- ({\i * (360 / 5) + 18}:1.25);
        }
      \foreach \i in {1,..., 4} {
          \draw[normal edge] ({\i * (360 / 5) - 126}:1.25) -- ({\i * (360 / 5) - 90}:2.25) -- ({\i * (360 / 5) - 54}:1.25);
          \node[filled vertex] (u\i) at ({\i * (360 / 5) - 90}:2.25) {};
          \node at ({\i * (360 / 5) - 90}:2.5) {$u_{\i}$};
        }
      \foreach \i in {1,..., 5} {
          \node[filled vertex] (v\i) at ({\i * (360 / 5) + 90}:1.25) {};
          \node at ({\i * (360 / 5) + 90}:1.5) {$v_{\i}$};
        }
      \draw [normal edge] (u1) -- (u2) (u3)-- (u4);
      \draw [forbidden edge] (u2)-- (u3);
      \draw [potential edge] (u1) -- (u3) (u1) -- (u4) (u2) -- (u4);
      \draw [potential edge] (u1) -- (v1) (u2) -- (v2) (u3) -- (v3) (u4) -- (v4);
    \end{tikzpicture}
    \caption{}
    \label{fig:not-contain-u2u3}
  \end{subfigure}
  \begin{subfigure}[b]{.23\linewidth}
    \centering
    \begin{tikzpicture}[scale = 0.725]
      \foreach \i in {1,..., 5} {
          \draw[normal edge] ({\i * (360 / 5) - 54}:1.25) -- ({\i * (360 / 5) + 18}:1.25);
        }
      \foreach \i in {1,..., 4} {
          \draw[normal edge] ({\i * (360 / 5) - 126}:1.25) -- ({\i * (360 / 5) - 90}:2.25) -- ({\i * (360 / 5) - 54}:1.25);
          \node[filled vertex] (u\i) at ({\i * (360 / 5) - 90}:2.25) {};
          \node at ({\i * (360 / 5) - 90}:2.5) {$u_{\i}$};
        }
      \foreach \i in {1,..., 5} {
          \node[filled vertex] (v\i) at ({\i * (360 / 5) + 90}:1.25) {};
          \node at ({\i * (360 / 5) + 90}:1.5) {$v_{\i}$};
        }
      \draw [forbidden edge] (u1) -- (u2);
      \draw [normal edge] (u2)-- (u3);
      \draw [potential edge] (u1) -- (u3) (u1) -- (u4) (u2) -- (u4) (u3)-- (u4);
      \draw [potential edge] (u1) -- (v1) (u2) -- (v2) (u3) -- (v3) (u4) -- (v4);
    \end{tikzpicture}
    \caption{}
  \end{subfigure}
  \caption{Refined patterns on nine vertices, the potential edges in Figure~\ref{fig:possible configurations of G}(b) but absent here are emphasized by red dashed lines.  (a) all the three edges $u_{1} u_{2}$, $u_{2} u_{3}$, and $u_{3} u_{4}$ are present; (b) both $u_{1}u_{2}$ and $u_{2}u_{3}$ are absent; (c) $u_{2} u_{3}$ is absent but both $u_{1} u_{2}$ and $u_{3} u_{4}$ are present; (d) $u_{1} u_{2}$ is absent but $u_{2} u_{3}$ is present.}
  \label{fig:two-missing-nine-vertices}
\end{figure}

\begin{proposition} \label{prop:missing-consecutive-two-nine-vertices-o}
  Let $G$ be a degree-bounded core graph on nine vertices.  If both $u_{1}u_{2}$ and $u_{2}u_{3}$ are absent from $G$, then $G$ is isomorphic to one of $(1\mathring{3}\mathring{4}\mathring{2})$, $(1\mathring{3}\mathring{4}2)$, $(\mathring{1}\mathring{3}\mathring{4}2)$, $(1\mathring{3}\|\mathring{2}4)$, and $(\mathring{2}41\mathring{3})$.
\end{proposition}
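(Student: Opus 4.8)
The plan is to determine $G$ by first fixing the edges inside $U$ and then the diagonal edges $u_iv_i$, using throughout the observations Obs.\ref{ob:1}--Obs.\ref{ob:12}, the degree bounds, Proposition~\ref{lem:K4 and I4 free}, and the fact that no t-imperfect graph on fewer than nine vertices can occur as an induced subgraph of $G$.

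First I would pin down $E(G[U])$. Since $u_1u_2,u_2u_3\notin E(G)$, the only potential edges left inside $U$ are $u_1u_3$, $u_1u_4$, $u_2u_4$, and $u_3u_4$. As $d(u_2)\ge 3$ and the only possible neighbours of $u_2$ besides $v_4,v_5$ are $u_4$ and $v_2$, at least one of $u_2u_4,u_2v_2$ is present; I would show $u_2u_4\in E(G)$, for otherwise $u_2v_2\in E(G)$, Obs.\ref{ob:1} (with $i=2$) then forbids $u_3u_4$, and the degree bounds on $u_1,u_3,u_4$ together with Proposition~\ref{lem:K4 and I4 free} applied to sets such as $\{u_1,u_2,u_3,v_2\}$ rule out the remaining configurations. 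Assuming $u_2u_4\in E(G)$, I would next show $u_1u_3\in E(G)$: otherwise $\{u_1,u_2,u_3\}$ is independent, so $u_2v_2\in E(G)$ by Proposition~\ref{lem:K4 and I4 free}, and the conditions $d(u_1),d(u_3)\ge 3$ force one of $u_1u_4,u_1v_1$ and one of $u_3u_4,u_3v_3$; running through these options one always reaches a $5$-hole on which some $u_j$ or $v_j$ has three or four consecutive neighbours, contradicting Lemma~\ref{lem:connection of u to C}, or else an induced t-imperfect subgraph. Finally, $u_1u_4$ and $u_3u_4$ cannot both be present: otherwise, since $u_1v_1\notin E(G)$ by Proposition~\ref{lem:K4 and I4 free}, the vertex $u_3$ would be adjacent to four vertices of the $5$-hole $u_4u_1v_4v_5v_1$. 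Hence $G[U]$ is the path $u_1u_3u_4u_2$, the path $u_3u_1u_4u_2$, or the disjoint union of the edges $u_1u_3$ and $u_2u_4$.

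For each of these three shapes of $G[U]$ I would then decide which of $u_1v_1,u_2v_2,u_3v_3,u_4v_4$ belong to $E(G)$. Each decision is forced by one of: a vertex whose degree would otherwise be six, contradicting degree-boundedness; a violation of Obs.\ref{ob:1}, Obs.\ref{ob:3}, Obs.\ref{ob:7}, Obs.\ref{ob:10}, or Obs.\ref{ob:12}; Proposition~\ref{lem:K4 and I4 free} applied to a four-set; or an induced copy of a known t-imperfect graph, typically $\overline{C_7}$, which appears on seven of the nine vertices precisely when too few diagonals are present. Collecting the surviving combinations yields $(1\mathring{3}\mathring{4}\mathring{2})$, $(1\mathring{3}\mathring{4}2)$, $(\mathring{1}\mathring{3}\mathring{4}2)$ from the path $u_1u_3u_4u_2$; $(\mathring{2}41\mathring{3})$ from the path $u_3u_1u_4u_2$; and $(1\mathring{3}\|\mathring{2}4)$ from the two-edge case.

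I expect the last step to be the main obstacle: the diagonals are comparatively unconstrained, and the dangerous configurations---those concealing an induced $\overline{C_7}$ or $W_5$, or forcing a degree to six---sit right beside the five graphs that survive, so the enumeration over the diagonal subsets for each of the three $G[U]$-shapes must be done exhaustively and carefully; one must in particular not miss an induced seven- or eight-vertex t-imperfect subgraph when only one or two diagonals are present. Establishing $u_1u_3\in E(G)$ in the first stage is similarly the most technical point there.
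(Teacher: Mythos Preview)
Your plan is sound and reaches the same five graphs, but it is organised differently from the paper's proof and this is worth noting.

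The paper does \emph{not} first pin down $G[U]$ as you do.  It begins by showing $u_1u_3\in E(G)$ directly (a single contradiction argument using $\overline{K_4}$-freeness, Lemma~\ref{lem:connection of u to C}, Obs.\ref{ob:1}, Obs.\ref{ob:10}, Obs.\ref{ob:7}, and the degree bound on $u_4$), and then immediately branches on which of $u_1v_1,u_3v_3$ is present (Obs.\ref{ob:7} with $i=2$).  In the branch $u_1v_1\in E(G)$ it derives the edges of $G[U]$ and the remaining diagonals together, arriving at $(\mathring{1}\mathring{3}\mathring{4}2)$; in the branch $u_1v_1\notin E(G),\,u_3v_3\in E(G)$ it proves $u_2u_4\in E(G)$ and then disposes of $u_4v_4,u_3u_4,u_1u_4$ one at a time to obtain the remaining four graphs.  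So the paper interleaves the determination of the $U$-edges and the diagonals, whereas you separate them into two phases.

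Your two-phase layout is conceptually cleaner, and your key structural claim---that $u_1u_4$ and $u_3u_4$ cannot both be present once $u_1u_3,u_2u_4\in E(G)$---is correct (the $K_4$ on $\{u_1,u_3,u_4,v_1\}$ forces $u_1v_1\notin E(G)$; then $d(u_4)\le 5$ forces $u_4v_4\notin E(G)$, so $u_4u_1v_4v_5v_1$ is a $5$-hole on which $u_3$ has four neighbours).  However, two of your sketched sub-arguments are thinner than you suggest.  First, to get $u_2u_4\in E(G)$ you need more than degree bounds and Proposition~\ref{lem:K4 and I4 free}: after $u_2v_2\in E(G)$, $u_3u_4\notin E(G)$, and $u_1u_3\in E(G)$, the decisive step is that the $5$-hole $u_3v_3v_2u_2v_5$ (when $u_3v_3\in E(G)$) leaves $u_4$ with a single neighbour, and otherwise $\{u_2,u_3,u_4,v_3\}$ is independent; this is Lemma~\ref{lem:connection of u to C} used directly, not via the listed observations.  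Second, the ``induced t-imperfect subgraph'' you anticipate in the diagonal phase is in fact an induced $C_7$ (forbidden by Proposition~\ref{lem:odd hole constrain}), not $\overline{C_7}$ or $W_5$; the paper uses exactly one such $7$-cycle.  With these tools in hand your enumeration goes through, and is no harder than the paper's.
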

\begin{proof}
  We first argue that the edge $u_{1}u_{3}$ must be present.   Suppose for contradiction that  $u_{1}u_{3}$ is absent.
  Note that $u_{2} v_{2} \in E(G)$, as otherwise $\{u_{1},u_{2},u_{3},v_{2}\}$ forms an independent set.  The edge $u_{3} v_{3}$ cannot be in $E(G)$, as otherwise $u_{1}$ has only one neighbor on the $5$-cycle $u_{3} v_{3} v_{2} u_{2} v_{5}$, contradicting Lemma~\ref{lem:connection of u to C}.  Then $d(u_{3}) > 2$ forces $u_{3} u_{4} \in E(G)$.  By Obs.\ref{ob:1} (with $i = 2$), $u_{2} u_{4} \in E(G)$, and by Obs.\ref{ob:10} (with $i = 1$), $u_{1} v_{1} \notin E(G)$.  
Since $d(u_{1}) > 2$, the edge $u_{1} u_{4}$ must be present.  Now that $u_{3} u_{4} \in E(G)$ and $u_{1} v_{1} \not\in E(G)$, Obs.\ref{ob:7} (with $i = 5$) implies $u_{4} v_{4} \in E(G)$. But then $d(u_{4}) = 6$, contradicting that $G$ is degree-bounded.

  Now that $u_{1}u_{3}\in E(G)$, by Obs.\ref{ob:7} (with $i = 2$), at least one of $u_{1} v_{1}$ and $u_{3} v_{3}$ is present.

  Assume first that $u_{1} v_{1}\in E(G)$.  By Proposition~\ref{lem:K4 and I4 free}, at least one of $u_{3} u_{4}$, $u_{2} u_{4}$, and $u_{3} v_{3}$ is in $E(G)$, as otherwise $\{u_{2},u_{3},u_{4},v_{3}\}$ forms an independent set.
  We argue that $u_{3} u_{4} \in E(G)$.
  Suppose for contradiction that $u_{3} u_{4} \notin E(G)$. If $u_{2} u_{4} \in E(G)$, then by Obs.\ref{ob:10} (with $i = 1$), $u_{2} v_{2} \notin E(G)$; and by Obs.\ref{ob:12} (with $i = 5$), Obs.\ref{ob:7} (with $i = 3$), and Obs.\ref{ob:10} (with $i = 3$), $u_{1} u_{4} \notin E(G)$, $u_{4} v_{4} \in E(G)$, and $u_{3} v_{3} \notin E(G)$.  Then $u_{1} v_{3} v_{2} u_{4} u_{2} v_{5} u_{3}$ is a $7$-cycle in $G$, contradicting Proposition~\ref{lem:odd hole constrain}.  Thus, $u_{2} u_{4} \notin E(G)$, and $u_{3} v_{3} \in E(G)$.  By Obs.\ref{ob:10} (with $i = 3$), $u_{4} v_{4} \notin E(G)$.  Since $d(u_{4}) > 2$, $u_{1} u_{4} \in E(G)$ and by Obs.\ref{ob:10} (with $i = 1$), $u_{2} v_{2} \notin E(G)$.  But then $d(u_{2}) = 2$, a contradiction.
  Now that $u_{3} u_{4} \in E(G)$, the edge $u_{1} u_{4}$ cannot exist, as otherwise $\{u_{1}, u_{3}, u_{4}, v_{1}\}$ forms a clique.  By Obs.\ref{ob:3} ($i=3$), $u_{3} v_{3} \in E(G)$.  If $u_{2} v_{2} \in E(G)$, then by Obs.\ref{ob:1} (with $i = 2$), $u_{2} u_{4} \in E(G)$, which violates Obs.\ref{ob:10} (with $i = 1$).  Since $d(u_{2}) > 2$, the edge $u_{2} u_{4}$ must be present.  Obs.\ref{ob:7} (with $i = 3$), together with the fact that  $u_{2} v_{2} \notin E(G)$, implies $u_{4} v_{4} \in E(G)$.  Thus, $G$ is $(\mathring{1}\mathring{3}\mathring{4}2)$.

  In the rest of the proof, $u_{1} v_{1}\not\in E(G)$ and $u_{3} v_{3} \in E(G)$.  Since $d(u_{2}) > 2$, at least one of $u_{2} v_{2}$ and $u_{2} u_{4}$ needs to be present.  Note that the presence of $u_{2} v_{2}$ implies the presence of $u_{2} u_{4}$, by Lemma~\ref{lem:connection of u to C} applied on vertex $u_4$ and the $5$-cycle $u_{3} v_{3} v_{2} u_{2} v_{5}$.
  If $u_{2} u_{4} \in E(G)$, but $u_{2} v_{2}$ is not, then by Obs.\ref{ob:7} (with $i = 3$), $u_{4} v_{4} \in E(G)$.  The edge $u_{3} u_{4}$ is in $E(G)$, as otherwise $u_{1}$ has three consecutive neighbors on the $5$-cycle $u_{3} v_{3} v_{4} u_{4} v_{1}$, contradicting Lemma~\ref{lem:connection of u to C}.  Since $d(u_{4}) < 6$, the edge $u_{1} u_{4}$ cannot be present.  Then $G$ is $(1\mathring{3}\mathring{4}2)$.
  Now that both $u_{2} u_{4}$ and $u_{2} v_{2}$ are present, the only potential edges that have not been excluded are $u_{4} v_{4}$, $u_{3} u_{4}$, and $u_{1} u_{4}$.
  Note that the presence of $u_{3} u_{4}$ implies the presence of $u_{4} v_{4}$; otherwise, by Obs.\ref{ob:7} (with $i = 5$), $u_{1} u_{4}\not\in E(G)$, but then $v_{3}$ has three consecutive neighbors on the $5$-cycle $u_{1} v_{4} u_{2} u_{4} u_{3}$, contradicting Lemma~\ref{lem:connection of u to C}.

  \begin{itemize}
    \item If none of $u_{4} v_{4}$, $u_{3} u_{4}$, and $u_{1} u_{4}$ is present, then $G$ is $(1\mathring{3}\|\mathring{2}4)$.
    \item If $u_{1} u_{4}$ is in $E(G)$ but $u_{4} v_{4}$ and $u_{3} u_{4}$ are not, then $G$ is $(\mathring{2}41\mathring{3})$.
    \item Otherwise, we must have $u_{4} v_{4}\in E(G)$.  Then $u_{3} u_{4}$ must be present as well, as otherwise $u_{3} v_{3} v_{4} u_{4} v_{1}$ is a $5$-cycle on which $u_1$ has three consecutive neighbors, contradicting Lemma~\ref{lem:connection of u to C}.
          Since $d(u_{4}) < 6$, the edge $u_{1} u_{4}$ cannot be present, and $G$ is $(1\mathring{3}\mathring{4}\mathring{2})$.    \qedhere
  \end{itemize}
\end{proof}

Note that it is symmetric to Proposition~\ref{prop:missing-consecutive-two-nine-vertices-o} if both $u_{2}u_{3}$ and $u_{3}u_{4}$ are absent.  Next we consider the situation that $u_{2} u_{3}$ is absent but both $u_{1} u_{2}$ and $u_{3} u_{4}$ are present; see Figure~\ref{fig:two-missing-nine-vertices}(c).

\begin{proposition} \label{prop:not-contain-u2u3}
  Let $G$ be a degree-bounded core graph on nine vertices.  If both $u_{1} u_{2}$ and $u_{3} u_{4}$ are in $E(G)$ but $u_{2} u_{3}$ is not, then $G$ is isomorphic to one of $(1\mathring{2}\mathring{4}3)$, $(\mathring{1}\mathring{2}\mathring{4}\mathring{3}\mathring{1})$, $(\mathring{1}\mathring{2}\mathring{4}3)$, $(1\mathring{2}43\mathring{5}1) - u_{2}$, and $(\mathring{1}\mathring{2}43\mathring{5}\mathring{1}) - u_{5}$.
\end{proposition}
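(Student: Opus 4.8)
The plan is to mimic the proofs of Propositions~\ref{prop:contains-all-outer-edges} and \ref{prop:missing-consecutive-two-nine-vertices-o}: starting from the refined pattern in Figure~\ref{fig:two-missing-nine-vertices}(c), where the only undetermined edges are the four pendants $u_{1}v_{1},u_{2}v_{2},u_{3}v_{3},u_{4}v_{4}$ and the three chords $u_{1}u_{3},u_{2}u_{4},u_{1}u_{4}$, I would use Observations Obs.\ref{ob:1}--Obs.\ref{ob:12}, Lemma~\ref{lem:connection of u to C}, Proposition~\ref{lem:K4 and I4 free}, Proposition~\ref{lem:odd hole constrain}, and the degree bound to prune every configuration except the five listed. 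Before branching it helps to record the reflection $\sigma$ of this configuration that fixes $v_{5}$ and the absent $u_{5}$, swaps $v_{1}\leftrightarrow v_{4}$ and $v_{2}\leftrightarrow v_{3}$, and swaps $u_{1}\leftrightarrow u_{4}$ and $u_{2}\leftrightarrow u_{3}$; since $\sigma$ preserves the hypothesis $u_{1}u_{2},u_{3}u_{4}\in E(G)$, $u_{2}u_{3}\notin E(G)$, it lets cases be treated in $\sigma$-pairs---it exchanges $u_{1}u_{3}$ with $u_{2}u_{4}$, exchanges $u_{1}v_{1}$ with $u_{4}v_{4}$ and $u_{2}v_{2}$ with $u_{3}v_{3}$, and fixes $u_{1}u_{4}$.

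The first and, I expect, most delicate step is to rule out the chord $u_{1}u_{4}$. Assume $u_{1}u_{4}\in E(G)$. Obs.\ref{ob:7} with $i=5$ (together with the nonexistence of $u_{5}$) forces at least one of $u_{1}v_{1},u_{4}v_{4}$ to be present; by $\sigma$ we may take $u_{4}v_{4}\in E(G)$. Then $\{u_{1},u_{2},u_{4},v_{4}\}$ would be a $K_{4}$ unless $u_{2}u_{4}\notin E(G)$, whereupon Obs.\ref{ob:3} (with $i=1$) gives $u_{1}v_{1}\in E(G)$, the degree bound on $u_{1}$ excludes $u_{1}u_{3}$, and Obs.\ref{ob:1} (with $i=2$ and with $i=3$) excludes $u_{2}v_{2}$ and $u_{3}v_{3}$; so $G$ is completely forced. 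However, this forced graph is not a core graph: the set $\{u_{1},u_{2},v_{5},u_{3},u_{4}\}$ induces a second $5$-hole $C'$ on which $v_{1}$ has four neighbours, contradicting Lemma~\ref{lem:connection of u to C} (equivalently, $G[\{v_{1}\}\cup C']$ has a $K_{4}$ as a proper $t$-minor). Hence $u_{1}u_{4}\notin E(G)$. This is exactly the point that is not detectable from the Observations applied to the fixed $5$-hole alone, so the argument must invoke the auxiliary $5$-hole $C'$; I expect verifying that this forced graph really fails the core condition to be the crux of the whole proof.

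With $u_{1}u_{4}\notin E(G)$ it remains to split on $\{u_{1}u_{3},u_{2}u_{4}\}\cap E(G)$: if both are absent, $U$ induces two disjoint edges; if both are present, $U$ induces the $4$-cycle $u_{1}u_{2}u_{4}u_{3}$; and if exactly one is present---a self-$\sigma$-paired situation, so we may assume $u_{2}u_{4}\in E(G)$, $u_{1}u_{3}\notin E(G)$---then $U$ induces the path $u_{1}u_{2}u_{4}u_{3}$. In each branch I would then determine the pendant edges: every $u_{i}v_{i}$ that is present is constrained by Obs.\ref{ob:1}, Obs.\ref{ob:3}, Obs.\ref{ob:10} and Obs.\ref{ob:12}, by the forbidden $K_{4}$ and $\overline{K_{4}}$ of Proposition~\ref{lem:K4 and I4 free}, by the absence of $7$-holes from Proposition~\ref{lem:odd hole constrain}, and by $d(u_{i})\le 5$. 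Running through the short list of configurations surviving all of these and identifying each with one of $(1\mathring{2}\mathring{4}3)$, $(\mathring{1}\mathring{2}\mathring{4}\mathring{3}\mathring{1})$, $(\mathring{1}\mathring{2}\mathring{4}3)$, $(1\mathring{2}43\mathring{5}1)-u_{2}$, and $(\mathring{1}\mathring{2}43\mathring{5}\mathring{1})-u_{5}$ finishes the proof; for the last two graphs, which arise here with respect to a $5$-hole of their own, the only remaining work is to exhibit the isomorphism to the stated one-vertex deletions of the $(3,3)$-partitionable graphs of Figure~\ref{fig: D graphs}.
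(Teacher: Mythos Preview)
Your proposal is correct and follows essentially the same route as the paper's proof: first exclude $u_{1}u_{4}$ via an auxiliary $5$-hole (the paper uses the very same vertex set $\{u_{1},u_{2},v_{5},u_{3},u_{4}\}$, only with $v_{4}$ as the offending vertex instead of your $v_{1}$), then use the $\sigma$-symmetry and the Observations to force one of $u_{2}u_{4},u_{1}u_{3}$ and determine the pendants, with the remaining ``both chords absent'' branch dying on the $7$-hole $u_{1}v_{3}v_{2}u_{4}u_{3}v_{5}u_{2}$. The only organisational difference is that the paper first shows one of $u_{2}v_{2},u_{3}v_{3}$ is present (whence Obs.\ref{ob:1} immediately gives a chord) before branching, whereas you branch on the chords first; the case analyses are otherwise identical, and your extra step of pinning down $u_{2}v_{2},u_{3}v_{3}$ during the $u_{1}u_{4}$ exclusion is harmless but unnecessary for the $5$-hole contradiction.
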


\begin{proof}
  We start by arguing that $u_{1} u_{4}$ is absent, and at least one of $u_{2} v_{2}$ and $u_{3} v_{3}$ is present.
  Suppose for contradiction that $u_{1} u_{4} \in E(G)$.  By Obs.\ref{ob:7} (with $i=5$), at least one of $u_{4} v_{4}$ and $u_{1} v_{1}$ is in $E(G)$. If $u_{4} v_{4}$ is in $E(G)$ but $u_{1} v_{1}$ is not, then by Obs.\ref{ob:3} (with $i=1$), $u_{2} u_{4} \in E(G)$; then $d(u_{4}) = 6$, a contradiction. A symmetric argument applies if $u_{1} v_{1}$ is in $E(G)$ but $u_{4} v_{4}$ is not.  Now that both $u_{1} v_{1}$ and $u_{4} v_{4}$ are in $E(G)$, neither of $u_{1} u_{3}$ and $u_{2} u_{4}$ can be in $E(G)$, as otherwise $\{u_{3},u_{1},u_{4},v_{1}\}$ or, respectively, $\{u_{1},u_{4},u_{2},v_{4}\}$ forms a clique.  But then $v_{4}$ has four neighbors on the $5$-cycle $u_{1} u_{4} u_{3} v_{5} u_{2}$, contradicting Lemma~\ref{lem:connection of u to C}.  In the rest, $u_{1} u_{4}\not\in E(G)$.
For $u_{2} v_{2}$ and $u_{3} v_{3}$, if both of them are absent, then by Obs.\ref{ob:3} (with $i = 3$), $u_{1} u_{3}$ has to be absent as well (note that $u_{3} u_{4}$ is in $E(G)$ while $u_{3} v_{3}$ and $u_{1} u_{4}$ are absent).  By a symmetric argument, the edge $u_{2} u_{4}$ is also absent.  But then $u_{1} v_{3} v_{2} u_{4} u_{3} v_{5} u_{2}$ is a $7$-cycle in $G$, contradicting Proposition \ref{lem:odd hole constrain}.

  Since $u_{2} v_{2}$ and  $u_{3} v_{3}$ are symmetric, it suffices to consider $u_{2} v_{2}\in E(G)$.  By Obs.\ref{ob:1} (with $i=2$), $u_{2} u_{4} \in E(G)$.
  If none of the remaining undecided potential edges, $u_{1} v_{1}$, $u_{3} v_{3}$, $u_{4} v_{4}$, and $u_{1} u_{3}$, is in $E(G)$, then $G$ is isomorphic to $(1\mathring{2}43\mathring{5}1) - u_{2}$.
  If $u_{1} u_{3} \in E(G)$, then by Obs.\ref{ob:3} (with $i = 3$), $u_{3} v_{3} \in E(G)$.  The edge $u_{1} v_{1}$ is in $E(G)$, as otherwise $u_{4}$ has four neighbors on the $5$-cycle $u_{1} u_{3} v_{1} v_{2} u_{2}$, contradicting Lemma~\ref{lem:connection of u to C}.  A symmetric argument enables us conclude that $u_{4} v_{4} \in E(G)$.  Then $G$ is $(\mathring{1}\mathring{2}\mathring{4}\mathring{3}\mathring{1})$.
  Now that $u_{1} u_{3}\not\in E(G)$, which implies $u_{3} v_{3}$ is not in $E(G)$ either, as otherwise, $u_{1} u_{2}$ is in $E(G)$ but neither of $u_{2} u_{3}$ and $u_{1} u_{3}$ is, contracting Obs.\ref{ob:1} (with $i = 3$).  If $u_{1} v_{1}$ is in $E(G)$ but $u_{4} v_{4}$ is not, then $G$ is isomorphic to $(\mathring{1}\mathring{2}43\mathring{5}\mathring{1}) - u_{5}$; if $u_{4} v_{4}$ is in $E(G)$ but $u_{1} v_{1}$ is not, then $G$ is $(1\mathring{2}\mathring{4}3)$; otherwise, both $u_{1} v_{1}$ and $u_{4} v_{4}$ are in $E(G)$, and $G$ is $(\mathring{1}\mathring{2}\mathring{4}3)$.
\end{proof}

In the last case, $u_{2} u_{3}$ is in $E(G)$, but at least one of $u_{1} u_{2}$ and $u_{3} u_{4}$ is not.  We may assume without loss of generality that $u_{1} u_{2}$ is absent; see Figure~\ref{fig:two-missing-nine-vertices}(d).

\begin{proposition} \label{prop:contain-u2u3}
  Let $G$ be a degree-bounded core graph on nine vertices.
  If $u_{2} u_{3}$ is in $E(G)$ but $u_{1} u_{2}$ is not, then $G$ is isomorphic to one of $(2\mathring{3}14)$, $(2\mathring{3}\mathring{1}4)$, $(23\mathring{1}4)$, $(1\mathring{4}32)$, $(1\mathring{3}\mathring{2}41)$, $(1\mathring{3}\mathring{2}4)$, $(14\|23)$, $(1\mathring{4}\mathring{3}2)$, $(\mathring{1}\mathring{2}43\mathring{5}\mathring{1}) - u_{3}$, $(1\mathring{2}43\mathring{5}1) - u_{3}$, and $(1\mathring{2}43\mathring{5}1) - u_{1}$.
\end{proposition}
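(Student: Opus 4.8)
The plan is to run the same style of case analysis as in Propositions~\ref{prop:contains-all-outer-edges}--\ref{prop:not-contain-u2u3}. Fix the $5$-hole $C=v_1 v_2 v_3 v_4 v_5$ and name $U=\{u_1,u_2,u_3,u_4\}$ as prescribed; by hypothesis $u_2 u_3\in E(G)$ and $u_1 u_2\notin E(G)$, so the potential edges still to be decided are $u_1 u_3$, $u_1 u_4$, $u_2 u_4$, $u_3 u_4$, and the four chords $u_i v_i$ for $i=1,\dots,4$, while every vertex of $G$ must have degree in $\{3,4,5\}$. I would first pin down $G[U]$ and then, inside each resulting sub-pattern, resolve the chords, reading off one of the eleven listed graphs or deriving a contradiction in every leaf.

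For the first level, note that since $u_2 u_3\in E(G)$ while $u_1 u_2\notin E(G)$, Obs.\ref{ob:1} with $i=1$ shows that $u_1 v_1\in E(G)$ would force $u_1 u_3\in E(G)$; since the only possible neighbours of $u_1$ outside $\{v_3,v_4\}$ are $v_1$, $u_3$, $u_4$, and $d(u_1)\ge 3$, it follows that $u_1$ is adjacent to $u_3$ or $u_4$, so $E(G)\cap\{u_1 u_3,u_1 u_4\}$ is one of $\{u_1 u_3\}$, $\{u_1 u_4\}$, $\{u_1 u_3,u_1 u_4\}$. I would branch on this, and then on whether $u_3 u_4$ and $u_2 u_4$ are present. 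The tools closing the sub-cases are the familiar ones: Obs.\ref{ob:1} and Obs.\ref{ob:3} to propagate a chord once two $U$-edges meet at a vertex; Obs.\ref{ob:7}, most often with $i=5$, where the would-be vertex ``$u_5$'' does not exist and its conclusion becomes an immediate contradiction, to force a chord next to the edge $u_1 u_4$; Obs.\ref{ob:10} and Obs.\ref{ob:12} to forbid a pair of chords once a $U$-path or a near-$4$-cycle appears; Proposition~\ref{lem:K4 and I4 free} to rule out a $K_4$ such as $\{u_1,u_3,u_4,v_1\}$ or $\{u_1,u_4,u_2,v_4\}$, or a $\overline{K_4}$ such as $\{u_2,u_3,u_4,v_3\}$; Proposition~\ref{lem:odd hole constrain} to rule out an induced $C_7$ obtained by deleting two vertices $v_i$; and the degree bound itself to kill any vertex that would acquire degree $2$ or $6$. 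Each surviving configuration should then be isomorphic, via a relabelling consistent with the hypothesis, to one of $(2\mathring{3}14)$, $(2\mathring{3}\mathring{1}4)$, $(23\mathring{1}4)$, $(1\mathring{4}32)$, $(1\mathring{3}\mathring{2}41)$, $(1\mathring{3}\mathring{2}4)$, $(14\|23)$, $(1\mathring{4}\mathring{3}2)$, $(\mathring{1}\mathring{2}43\mathring{5}\mathring{1}) - u_3$, $(1\mathring{2}43\mathring{5}1) - u_3$, and $(1\mathring{2}43\mathring{5}1) - u_1$.

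The hard part is the bookkeeping rather than any single idea. The seven undecided potential edges give, a priori, on the order of $2^7$ configurations, and the write-up only stays readable if the sharpest available observation is used at each node as early as possible; concretely I would present it as a table in the style of Table~\ref{table:1}, with rows indexed by $E(G)\cap\{u_1 u_3,u_1 u_4,u_2 u_4,u_3 u_4\}$ and columns by $E(G)\cap\{u_i v_i\mid i=1,\dots,4\}$, each cell carrying either the name of the graph obtained or the observation (or degree count) that excludes the configuration. Two points need care: first, the pattern of Figure~\ref{fig:possible configurations of G}(b) admits the reflection $u_1\leftrightarrow u_4$, $u_2\leftrightarrow u_3$, but the hypothesis ``$u_2 u_3\in E(G)$, $u_1 u_2\notin E(G)$'' breaks it, so a branch may be dropped as symmetric only when the symmetry used fixes the hypothesis; second, three of the outputs are most naturally viewed as induced subgraphs of the $10$-vertex $(3,3)$-partitionable graphs $(1\mathring{2}43\mathring{5}1)$ and $(\mathring{1}\mathring{2}43\mathring{5}\mathring{1})$, obtained by deleting one $u$-vertex, and the corresponding leaves of the case tree must be recognised as these rather than recorded as fresh nine-vertex patterns.
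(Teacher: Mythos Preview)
Your plan is the paper's plan: the paper also runs a finite case analysis on the undecided potential edges, using exactly the toolbox you list (Obs.\ref{ob:1}, \ref{ob:3}, \ref{ob:7}, \ref{ob:10}, \ref{ob:12}, Proposition~\ref{lem:K4 and I4 free}, Proposition~\ref{lem:odd hole constrain}, and the degree bound). The paper's top-level split is on whether $u_3u_4\in E(G)$ rather than on $E(G)\cap\{u_1u_3,u_1u_4\}$, but this is only a different ordering of the same tree; in particular, in the sub-case $u_3u_4\notin E(G)$ the reflection $u_1\leftrightarrow u_4$, $u_2\leftrightarrow u_3$ is restored (both $u_1u_2$ and $u_3u_4$ are absent, $u_2u_3$ present), and the paper exploits it to halve the work, which your caution about symmetry would forgo.

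One point is genuinely wrong and would bite if you relied on it. Obs.\ref{ob:7} with $i=5$ does \emph{not} yield a contradiction from the non-existence of $u_5$: the clause ``$u_iu_{i-1},\,u_iu_{i+1}\in E(G)$'' is justified by the neighbourhood of $u_i$ on the auxiliary $5$-cycle, so when $u_i$ is absent that clause is vacuous, not violated. What actually works (and what the paper uses) is the \emph{first} clause of Obs.\ref{ob:7}: from $u_1u_4\in E(G)$ and $u_1v_1,u_4v_4\notin E(G)$ one gets $u_3u_4\notin E(G)$ (and $u_1u_2\notin E(G)$), and it is the collision with an already-known $u_3u_4\in E(G)$, or with $u_2u_3\in E(G)$ via the $i=2$ instance, that forces the chord. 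If you rewrite your ``force a chord next to $u_1u_4$'' step along these lines, your table-style write-up goes through and matches the paper's proof.
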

\begin{proof}
  Consider first that $u_{3} u_{4}$ is in $E(G)$.  We argue that neither of $u_{1} v_{1}$ and $u_{1} u_{3}$ cannot be present.
  If $u_{1} v_{1}$ is in $E(G)$, then by Obs.\ref{ob:1} (with $i = 1$), $u_{1} u_{3} \in E(G)$.  As a result, $u_{1} u_{4} \notin E(G)$, as otherwise $\{u_{1}, u_{3}, u_{4}, v_{1}\}$ is a clique.  But then $u_{3} v_{3} \in E(G)$ by Obs.\ref{ob:3} (with $i = 3$), and $d(u_{3}) = 6$, a contradiction.
  Likewise, the existence of $u_{1} u_{3}$ would force $u_{3} v_{3} \in E(G)$ by Obs.\ref{ob:7} (with $i = 2$), then $d(u_{3}) = 6$.  
  Now $u_{1}$ is adjacent to neither of $u_{3}$ and $v_1$, the edge $u_{1} u_{4}$ must be present to avoid $d(u_{1}) > 2$.   Moreover, $u_{4} v_{4} \in E(G)$ by Obs.\ref{ob:7} (with $i = 5$), and then from $d(u_{4}) < 6$ it can be inferred  $u_{2} u_{4} \notin E(G)$.
  If neither of the undecided potential edges, $u_{2} v_{2}$ and $u_{3} v_{3}$, is present, then $G$ is $(1\mathring{4}32)$; if only $u_{2} v_{2}$ is present, then $G$ is isomorphic to $(1\mathring{2}43\mathring{5}1) - u_{3}$; if only $u_{3} v_{3}$ is present, then $G$ is $(1\mathring{4}\mathring{3}2)$; otherwise, both are present, and $G$ is isomorphic to $(\mathring{1}\mathring{2}43\mathring{5}\mathring{1}) - u_{3}$.

  In the rest, $u_{3} u_{4}$ is not in $E(G)$.  We consider the potential edges incident to $u_1$ and $u_4$; note that their degrees are at least three.
  By Obs.\ref{ob:1} (with $i = 1$),  the presence of $u_{1} v_{1}$ implies the existence of $u_{1} u_{3}$; likewise, $u_{4} v_{4}$ implies $u_{2} u_{4} \in E(G)$.
  \begin{itemize}
  \item
    Case 1, $u_{1} v_{1}$ is in $E(G)$.
    Note that if $u_{2} u_{4}$ is in $E(G)$, then $u_{4} v_{4}$ must be in $E(G)$ as well; otherwise $u_{2} v_{2} \in E(G)$ by Obs.\ref{ob:7} (with $i = 3$), contradicting Obs.\ref{ob:10} (with $i = 1$).
    First, if $u_{4} v_{4}\in E(G)$, then by Obs.\ref{ob:10} (with $i = 1$), $u_{2} v_{2} \notin E(G)$.  A symmetric argument implies $u_{3} v_{3} \notin E(G)$.  Note that $u_{1} u_{4} \notin E(G)$, as otherwise $G- \{v_{2}, v_{3}\}$ is isomorphic to $\overline{C_7}$.  Then $G$ is isomorphic to $(1\mathring{2}43\mathring{5}1) - u_{1}$.
    Second, if $u_{1} u_{4}$ is in $E(G)$ but $u_{4} v_{4}$ and $u_{2} u_{4}$ are not, then by Obs.\ref{ob:10} (with $i = 1$), $u_{2} v_{2} \notin E(G)$.  Dependent on whether $u_{3} v_{3}$ is present or not,  $G$ is either $(2\mathring{3}\mathring{1}4)$ or $(23\mathring{1}4)$.
  \item
    Case 2, $u_{4} v_{4}$ is in $E(G)$.  It is symmetric to case 1.
  \item
    Case 3, $u_{1} u_{3}$ is in $E(G)$ but $u_{1} v_{1}$ and $u_{4} v_{4}$ are not.  By Obs.\ref{ob:7} (with $i = 2$), $u_{3} v_{3} \in E(G)$.
  If $u_{2} u_{4}$ is in $E(G)$, then by Obs.\ref{ob:7} (with $i = 3$), $u_{2} v_{2} \in E(G)$.  Dependent on whether $u_{1} u_{4}$ is in $E(G)$ or not, $G$ is either $(1\mathring{3}\mathring{2}4)$ or $(1\mathring{3}\mathring{2}41)$.
  If $u_{1} u_{4}$ is in $E(G)$ but $u_{2} u_{4}$ is not, then $u_{2} v_{2} \notin E(G)$, as otherwise $u_{2} u_{3} u_{1} u_{4} v_{2}$ is a $5$-cycle, on which $v_{4}$ has two non-consecutive neighbors, contradicting Lemma~\ref{lem:connection of u to C}.  Then $G$ is $(2\mathring{3}14)$.
  \item
    Case 4, $u_{2} u_{4}$ is in $E(G)$ but $u_{1} v_{1}$ and $u_{4} v_{4}$ are not.  It is symmetric to case 3.
  \end{itemize}
  Now that all of $u_{1} v_{1}$, $u_{4} v_{4}$, $u_{1} u_{3}$, and $u_{2} u_{4}$ are absent, the edge $u_{1} u_{4}$ must be present to ensure $d(u_{1}) > 2$.
  Then $u_{3} v_{3} \notin E(G)$, as otherwise $u_{2}$ has only one neighbor on the $5$-cycle $u_{1} u_{4} v_{1} u_{3} v_{3}$, contradicting Lemma~\ref{lem:connection of u to C}.  A symmetric argument implies $u_{2} v_{2} \notin E(G)$.  Thus, $G$ is $(14\|23)$.
\end{proof}

By Propositions~\ref{prop:contains-all-outer-edges}--\ref{prop:contain-u2u3}, a degree-bounded core graph of order nine is one of     $(1\mathring{3}\mathring{4}\mathring{2})$, $(1\mathring{3}\mathring{4}2)$, $(\mathring{1}\mathring{3}\mathring{4}2)$, $(\mathring{1}\mathring{2}\mathring{4}3)$, $(\mathring{1}\mathring{2}\mathring{4}\mathring{3}\mathring{1})$, $(1\mathring{2}\mathring{4}3)$, $(1\mathring{3}\mathring{2}4)$, $(1\mathring{3}\mathring{2}41)$, $(2\mathring{3}14)$, $(2\mathring{3}\mathring{1}4)$, $(23\mathring{1}4)$, $(\mathring{2}41\mathring{3})$, $(1\mathring{4}32)$, $(14\|23)$, $(1\mathring{4}\mathring{3}2)$, $(1\mathring{3}\|\mathring{2}4)$, or a proper induced subgraph of a $(3, 3)$-partitionable graph.

\begin{lemma} \label{lem:minimal-graphs-of-pattern-g}
  All degree-bounded core graphs of order nine are t-perfect.  Only $(\mathring{1}2\mathring{3}45\mathring{1}) - u_{2}$, $(123\mathring{4}\mathring{5}1) - u_{2}$, $(1\mathring{2}43\mathring{5}1) - u_{1}$, $(1\mathring{3}\mathring{2}4)$, and $(\mathring{2}41\mathring{3})$ of them are self-complementary graphs.
\end{lemma}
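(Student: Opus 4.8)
The plan is to read the conclusion off from the enumeration that Propositions~\ref{prop:contains-all-outer-edges}--\ref{prop:contain-u2u3} have just completed, combined with the t-perfection catalogue of Proposition~\ref{lem:t-perfect-graphs}. By those propositions a degree-bounded core graph $G$ of order nine is either a proper induced subgraph of a $(3,3)$-partitionable graph, or one of the sixteen explicitly named graphs $(1\mathring{3}\mathring{4}\mathring{2}),\dots,(1\mathring{3}\|\mathring{2}4)$ listed just before the lemma. In the first case $G$ is t-perfect because $(3,3)$-partitionable graphs are minimally t-imperfect, so all of their proper t-minors---in particular all of their nine-vertex induced subgraphs---are t-perfect. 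In the second case each of the sixteen graphs already appears in the list of Proposition~\ref{lem:t-perfect-graphs}, hence is t-perfect. This settles the first assertion.

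For the second assertion, observe first that complementation maps the family of degree-bounded core graphs of order nine to itself: the core-graph property is preserved under complementation (it is symmetric in $G$ and $\overline G$ by definition), and on nine vertices the two bounds $d(v)\ge 3$ and $d(v)\le 5$ are exchanged, since $d_{\overline G}(v)=8-d_G(v)$. Hence it suffices to decide which of the finitely many graphs in the enumeration are isomorphic to their own complement. A self-complementary graph on nine vertices has exactly $\binom{9}{2}/2=18$ edges, and this is cheap to check: every graph of the pattern of Figure~\ref{fig:possible configurations of G}(b) has the thirteen forced edges (the five edges of $C$ together with the eight edges $u_iv_{i+2}$ and $u_iv_{i+3}$, $i=1,\dots,4$) plus exactly the potential edges recorded by its name, so its size is read off directly; and $|E(D-u_i)|=|E(D)|-d_D(u_i)$ when $D$ is a $(3,3)$-partitionable graph whose size is known. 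Deleting from the enumeration every graph whose size is not $18$ leaves only a handful of candidates.

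The remaining step is to test these candidates one at a time. For the five graphs named in the statement we exhibit an isomorphism to the complement---for $(1\mathring{3}\mathring{2}4)$ and $(\mathring{2}41\mathring{3})$ directly on the labelled vertices, and for $(\mathring{1}2\mathring{3}45\mathring{1})-u_2$, $(123\mathring{4}\mathring{5}1)-u_2$, and $(1\mathring{2}43\mathring{5}1)-u_1$ by using that each parent $(3,3)$-partitionable graph has a $(3,3)$-partitionable complement among the ten graphs of Figure~\ref{fig: D graphs} and tracking where the deleted vertex is sent. For every other $18$-edge candidate we rule out self-complementarity by forming the complement explicitly and exhibiting an isomorphism invariant that separates it from $G$: the degree sequence handles some candidates, and for those whose degree sequence happens to be symmetric under $d\mapsto 8-d$ a finer invariant, such as the number of triangles, does the job. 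The only genuine difficulty is the bookkeeping in this last step: there is no single decisive idea, only a finite but somewhat tedious case analysis over the dozen-or-so surviving graphs, in each of which one must correctly decode the compact name, form the complement, and confirm or refute an isomorphism---organising the candidates by size, and then by degree sequence, is what keeps it manageable. The self-complementary graphs are then exactly the five fixed points of the complementation involution, namely the graphs drawn in Figure~\ref{fig:self-complement}.
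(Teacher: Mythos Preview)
Your proposal is correct and mirrors exactly what the paper does: the lemma is stated in the paper without its own proof environment, being an immediate corollary of the sentence preceding it (the enumeration from Propositions~\ref{prop:contains-all-outer-edges}--\ref{prop:contain-u2u3}) together with Proposition~\ref{lem:t-perfect-graphs} and the minimal t-imperfection of $(3,3)$-partitionable graphs, while the self-complementarity claim is left as a routine finite check. Your write-up simply spells out that implicit argument, including the edge-count filter and the invariant checks for the second assertion, which the paper omits entirely.
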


We are now ready to prove Theorem~\ref{thm:self-complement}.
\begin{proof}[Proof of Theorem~\ref{thm:self-complement}]
  The sufficiency is quite obvious.  One may easily verify that $C_{5}$, $(\mathring{2}41\mathring{3})$, $(1\mathring{3}\mathring{2}4)$, $(1\mathring{2}\mathring{3}4)$, $(\mathring{1}23\mathring{4})$, and $(\mathring{1}32\mathring{4})$ are all self-complementary.  Since all of them contain a $C_5$, they are not perfect.
We have seen that  $(\mathring{2}41\mathring{3})$ and $(1\mathring{3}\mathring{2}4)$ are t-perfect; on the other hand, $(1\mathring{2}\mathring{3}4)$, $(\mathring{1}23\mathring{4})$, and $(\mathring{1}32\mathring{4})$ are isomorphic to  $(123\mathring{4}\mathring{5}1) - u_{2}$, $(\mathring{1}2\mathring{3}45\mathring{1}) - u_{2}$, and $(1\mathring{2}43\mathring{5}1) - u_{1}$ respectively, hence t-perfect as well.

  For the necessity, suppose that $G$ is a self-complementary t-perfect graph and not perfect.  Since both $G$ and $\overline{G}$ are t-perfect, $G$ is a core graph.  By Corollary \ref{cor:ten vertices}, $5 \le n\le 10$.  Since the order of a self-complementary graph is either $4k$ or $4k + 1$ for some $k \ge 0$, we can have $n \in \{5,8,9\}$.  Since $G$ is not perfect, it contains an odd hole, and by Proposition~\ref{lem:odd hole constrain}, every odd hole in $G$ is a $5$-cycle.
  If $n = 5$, then $G$ is $C_{5}$.

  If $n = 9$, then $G$ is of pattern Fig.~\ref{fig:possible configurations of G}(b).  We argue that $G$ is degree bounded.  Every vertex in $C$ has degree at least three and at most five.  Suppose that one vertex $u\in U$ has degree two, then it is not adjacent to any other vertex in $U$.  But then the degree of $u$ in $\overline G$ is six; thus there is a degree-6 vertex, which has to be in $U$.  But then we have a vertex in $U$ that is nonadjacent to others in $U$, and another vertex in $U$ that is adjacent to all of the others in $U$, a contradiction.  By Lem.~\ref{lem:minimal-graphs-of-pattern-g}, $G$ is one of $(\mathring{1}23\mathring{4})$, $(\mathring{2}41\mathring{3})$, $(\mathring{1}32\mathring{4})$, $(1\mathring{2}\mathring{3}4)$, and $(1\mathring{3}\mathring{2}4)$.

  It remains to show that there is no graph of order 8 satisfying the conditions.  Let $G$ be a core graph of order $8$.  We may assume that the indices for the three vertices in $U$ are not consecutive: If $G$ is of pattern Figure~\ref{fig:possible configurations of G}(a), then we can consider its complement.  (With different choices of $5$-cycles, a core graph may be of more than one patterns.)  If there is a vertex $x$ of degree $2$, then $x\in U$, and the two neighbors of $x$ are adjacent.  Then in $\overline{G}$, every vertex in $U$ has degree at least three, which means $x$ is mapped to a vertex $y$ in $C$.  However, if $y$ has degree two, then its two neighbors are not adjacent in $\overline{G}$, a contradiction.  Therefore, the minimum degree is at least three, and since $G$ is self-complementary, the maximum degree is at most four.
  By Lemma~\ref{lem:t-perfect-order-8}, $G$ can only be one of $(\mathring{1}\mathring{3}\|\mathring{2})$, $(\mathring{1}3\|\mathring{2})$, $(\mathring{1}2\mathring{3})$, $(\mathring{3}1\mathring{2})$, $(\mathring{1}23)$, $(\mathring{3}12)$, $\overline{(\mathring{1}\|\mathring{2}\|\mathring{4})}$, and $(123)$, but none of them is self-complementary.
\end{proof}

\section{Proof of Theorem~\ref{thm:main}}
\label{sec:mti}

Bruhn and Stein~\cite{bruhn-10-t-perfection-claw-free} showed that the $(3,3)$-partitionable graphs are minimally t-imperfect.\footnote{They showed that those containing $C_{10}^{2}$ are minimally t-imperfect, while $(1\mathring{2}43\mathring{5}1)$ and $(\mathring{1}\mathring{2}43\mathring{5}\mathring{1})$, which are complements to each other, are referred to an unpublished manuscript or Bruhn.  For the sake of completeness, we provide a proof at the appendix.}  Therefore, we only need to show the sufficiency in Theorem~\ref{thm:main}.
We say that a clique $K$ of a connected graph $G$ is a \emph{clique separator} of $G$ if $G-K$ is not connected.
\begin{lemma}[Chv{\'{a}}tal~\cite{chvatal-75-graph-polytopes}, Gerards~\cite{gerards-98-all-subgraphs-t-perfect}]
  \label{clique separator}
  No minimally t-imperfect graph contains a clique separator.
\end{lemma}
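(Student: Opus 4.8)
The plan is to derive the lemma from the stronger fact that gluing two t-perfect graphs along a common clique produces a t-perfect graph. First I would observe that a minimally t-imperfect graph $G$ is connected (otherwise each component is a proper induced subgraph, hence a proper t-minor, hence t-perfect, and since both $P(\cdot)$ and the independent set polytope factor as a product over components, $G$ would be t-perfect). So I would assume $G$ is minimally t-imperfect and has a clique separator $K$, write $V(G)\setminus K=A\cup B$ with $A,B$ nonempty, disjoint, and joined by no edge, and set $G_1=G[A\cup K]$ and $G_2=G[B\cup K]$. Each $G_i$ is $G$ minus a nonempty vertex set, so it is a proper t-minor of $G$ and therefore t-perfect; the remaining work is to show that $G$ itself is then t-perfect, contradicting t-imperfection.

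To that end I would take an arbitrary $\bar x\in P(G)$ and aim to write it as a convex combination of characteristic vectors of independent sets of $G$. Since $G_1$ is an \emph{induced} subgraph, every edge and every induced odd cycle of $G_1$ is one of $G$, so every inequality defining $P(G_1)$ is among those defining $P(G)$; hence $\bar x|_{V(G_1)}\in P(G_1)$, which by t-perfection equals the independent set polytope of $G_1$. Thus $\bar x|_{V(G_1)}=\sum_i\lambda_i\,\chi^{S_i}$ for independent sets $S_i$ of $G_1$ with $\lambda_i>0$ and $\sum_i\lambda_i=1$, and symmetrically $\bar x|_{V(G_2)}=\sum_j\mu_j\,\chi^{T_j}$. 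The key point I would exploit is that $K$ being a clique forces each $S_i$ and each $T_j$ to meet $K$ in at most one vertex. Sorting the $S_i$ into classes $\mathcal S_w=\{S_i:S_i\cap K=\{w\}\}$ for $w\in K$ and $\mathcal S_\varnothing=\{S_i:S_i\cap K=\varnothing\}$, the $\lambda$-weight of $\mathcal S_w$ equals $\bar x_w$ and that of $\mathcal S_\varnothing$ equals $1-\bar x(K)$; sorting the $T_j$ into classes $\mathcal T_w,\mathcal T_\varnothing$ the same way yields identical weights. So the two decompositions have the same marginals on $K$.

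Next I would couple them within each class. In the class indexed by $w$, of common weight $c=\bar x_w>0$, pick $\nu_{ij}\ge 0$ (over $S_i\in\mathcal S_w$, $T_j\in\mathcal T_w$) with $\sum_j\nu_{ij}=\lambda_i$ and $\sum_i\nu_{ij}=\mu_j$, e.g.\ $\nu_{ij}=\lambda_i\mu_j/c$, and likewise for $\mathcal S_\varnothing,\mathcal T_\varnothing$ (classes of weight zero are empty). For each surviving pair put $R_{ij}=S_i\cup T_j$; because $S_i\cap K=T_j\cap K$ and no edge joins $A$ to $B$, the set $R_{ij}$ is independent in $G$. Then $\sum\nu_{ij}\,\chi^{R_{ij}}$ (over all classes and surviving pairs) has total weight $1$, agrees with $\bar x$ on each $v\in A$ (the contribution is $\sum_{i:\,v\in S_i}\sum_j\nu_{ij}=\sum_{i:\,v\in S_i}\lambda_i=\bar x_v$, the inner sum over $j$ in the class of $S_i$), agrees with $\bar x$ on each $v\in B$ by symmetry, and agrees with $\bar x$ on each $w\in K$ (only pairs in class $w$ contain $w$, and their $\nu$-weights total $\bar x_w$). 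Hence $\bar x$ lies in the independent set polytope of $G$, so $G$ is t-perfect --- the contradiction. I expect the one genuinely delicate step to be this coupling: one must isolate ``which vertex of $K$ is chosen, if any'' as the single shared coordinate, check that the glued sets remain independent in $G$, and verify that the combined weights reproduce $\bar x$ exactly on $K$; once that is set up, the product coupling $\nu_{ij}=\lambda_i\mu_j/c$ does the job and everything else is bookkeeping.
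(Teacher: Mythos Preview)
Your argument is correct. The paper itself does not give a proof of this lemma at all: it is stated with attribution to Chv{\'a}tal and to Gerards--Shepherd and used as a black box. So there is no ``paper's own proof'' to compare against.

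That said, what you wrote is essentially the classical argument behind the cited result: split at the clique separator, use minimality to get t-perfection of the two sides, and glue the two convex decompositions by matching on the (at most one) vertex of $K$ picked by each independent set. Two small remarks worth making explicit in a polished write-up. First, the non-negativity of the weight $1-\bar x(K)$ of the class $\mathcal S_\varnothing$ is not assumed but falls out automatically, since it equals a sum of nonnegative $\lambda_i$'s; you implicitly use this, and it is what guarantees the two sides have the same ``empty on $K$'' mass. Second, the step ``$\bar x|_{V(G_1)}\in P(G_1)$'' relies on $G_1$ being an \emph{induced} subgraph so that every induced odd cycle of $G_1$ is also one of $G$; you state this, but it is exactly the place where the paper's choice to phrase the odd-cycle constraints via induced cycles matters. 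With those two points noted, the coupling $\nu_{ij}=\lambda_i\mu_j/c$ within each class and the verification that $\sum\nu_{ij}\chi^{R_{ij}}=\bar x$ are clean and complete.
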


Throughout this section, we assume that both $G$ and its complement $\overline{G}$ are minimally t-imperfect graphs.  By Lemma~\ref{clique separator}, neither $G$ nor $\overline{G}$ can have a clique separator.   Thus, for each vertex $u \in U$, we have
\begin{equation}
  \label{eq:1}
  2 < d(u) < n - 3.
\end{equation}
Note that if $d(u) = n - 3$, then $u$ has two neighbors in $\overline{G}$, which is a clique separator.

Note that $G$ is a core graph.   By Proposition~\ref{lem:odd hole constrain} and Corollary \ref{cor:ten vertices}, the order of $G$ is between five and ten.
Recall that every almost bipartite graph, which contains a vertex whose deletion leaves the graph bipartite, is t-perfect~\cite{fonlupt-82-h-perfectness}.
The only core graph of order five is $C_5$.  Both core graphs of order six, $(1)$ and $(\mathring{1})$, are almost bipartite, e.g., removing $v_3$.  There are 16 core graphs of order seven, $(1 2)$, $(\mathring{1} 2)$, $(1 \mathring{2})$, $(\mathring{1} \mathring{2})$, $(1\| 2)$, $(\mathring{1}\| 2)$, $(1\| \mathring{2})$, $(\mathring{1} \|\mathring{2})$, and their complements.  All the listed eight graphs become bipartite after removing $v_4$, hence almost bipartite.
By Lemma~\ref{lem:t-perfect-order-8} and the degree requirements \eqref{eq:1}, $G$ cannot have order eight either.
Likewise, by Lemma~\ref{lem:minimal-graphs-of-pattern-g}, all core graphs of order nine satisfying \eqref{eq:1} are t-perfect.  Therefore, we are only left with $n = 10$.

In the rest of this section, the order of $G$ is ten.  Our analysis is based on whether $(123451)$ is a (not necessarily induced) subgraph of $G$.  The arguments here are somewhat similar to that in Section~\ref{sec:graph-with-pattern-g}.  Let us start with an easy case, where all the five edges $u_iu_{i+1}$ for $i = 1, \ldots, 5$ are in $E(G)$; see Figure~\ref{fig:10 vertices}(a).  Recall that all the indices are understood as modulo 5.

\begin{figure} [h]
  \centering
  \begin{subfigure}[b]{.24\linewidth}
    \centering
    \begin{tikzpicture}[scale=.75]
      \foreach \i in {1,..., 5} {
          \draw[normal edge] ({54 + \i * (360 / 5)}:1.25) -- ({126 + \i * (360 / 5)}:1.25);
        }
      \foreach \i in {1,..., 5} {
          \node[filled vertex] (v\i) at ({54 + \i * (360 / 5)}:1.25) {};
          \node at ({54 + \i * (360 / 5)}:1.5) {$v_{\i}$};
        }
      \foreach \i in {1,..., 5} {
          \node[filled vertex] (u\i) at ({234 + \i * (360 / 5)}:2) {};
          \node at ({234 + \i * (360 / 5)}:2.25) {$u_{\i}$};
        }
      \draw [normal edge] (u1) -- (v3) (u1) -- (v4);
      \draw [normal edge] (u2) -- (v4) (u2) -- (v5);
      \draw [normal edge] (u3) -- (v5) (u3) -- (v1);
      \draw [normal edge] (u4) -- (v1) (u4) -- (v2);
      \draw [normal edge] (u5) -- (v2) (u5) -- (v3);
      \draw [normal edge] (u1) -- (u5) (u1) -- (u2) (u2) -- (u3) (u3) -- (u4) (u4) -- (u5);
      \draw [potential edge]  (u1) -- (v1)  (u2) -- (v2) (u3) -- (v3) (u4) -- (v4) (u5) -- (v5);
      \draw [potential edge]  (u1) edge [bend right] (u3) (u1) edge [bend left] (u4) (u2) edge [bend right] (u4) (u2) edge [bend left] (u5) (u3) edge [bend right] (u5);
    \end{tikzpicture}
    \caption{}
  \end{subfigure}
  \begin{subfigure}[b]{.24\linewidth}
    \centering
    \begin{tikzpicture}[scale=.75]
      \foreach \i in {1,..., 5} {
          \draw[normal edge] ({54 + \i * (360 / 5)}:1.25) -- ({126 + \i * (360 / 5)}:1.25);
        }
      \foreach \i in {1,..., 5} {
          \node[filled vertex] (v\i) at ({54 + \i * (360 / 5)}:1.25) {};
          \node at ({54 + \i * (360 / 5)}:1.5) {$v_{\i}$};
        }
      \foreach \i in {1,..., 5} {
          \node[filled vertex] (u\i) at ({234 + \i * (360 / 5)}:2) {};
          \node at ({234 + \i * (360 / 5)}:2.25) {$u_{\i}$};
        }
      \draw [normal edge] (u1) -- (v3) (u1) -- (v4);
      \draw [normal edge] (u2) -- (v4) (u2) -- (v5);
      \draw [normal edge] (u3) -- (v5) (u3) -- (v1);
      \draw [normal edge] (u4) -- (v1) (u4) -- (v2);
      \draw [normal edge] (u5) -- (v2) (u5) -- (v3);
      \draw [normal edge] (u2) edge [bend right] (u4);
      \draw [potential edge]  (u2) edge [bend left] (u5) (u1) edge [bend right] (u3) (u3) edge [bend right] (u5)  (u1) edge [bend left] (u4);
      \draw [potential edge]  (u1) -- (v1)  (u2) -- (v2) (u3) -- (v3) (u4) -- (v4) (u5) -- (v5);
      \draw [forbidden edge] (u2) -- (u3) (u3) -- (u4);
      \draw [potential edge]  (u4) -- (u5) (u1) -- (u5) (u1) -- (u2);
    \end{tikzpicture}
    \caption{}
    \label{}
  \end{subfigure}
  \begin{subfigure}[b]{.24\linewidth}
    \centering
    \begin{tikzpicture}[scale=.75]
      \foreach \i in {1,..., 5} {
          \draw[normal edge] ({54 + \i * (360 / 5)}:1.25) -- ({126 + \i * (360 / 5)}:1.25);
        }
      \foreach \i in {1,..., 5} {
          \node[filled vertex] (v\i) at ({54 + \i * (360 / 5)}:1.25) {};
          \node at ({54 + \i * (360 / 5)}:1.5) {$v_{\i}$};
        }
      \foreach \i in {1,..., 5} {
          \node[filled vertex] (u\i) at ({234 + \i * (360 / 5)}:2) {};
          \node at ({234 + \i * (360 / 5)}:2.25) {$u_{\i}$};
        }
      \draw [normal edge] (u1) -- (v3) (u1) -- (v4);
      \draw [normal edge] (u2) -- (v4) (u2) -- (v5);
      \draw [normal edge] (u3) -- (v5) (u3) -- (v1);
      \draw [normal edge] (u4) -- (v1) (u4) -- (v2);
      \draw [normal edge] (u5) -- (v2) (u5) -- (v3);
      \draw [potential edge]  (u2) edge [bend left] (u5) (u1) edge [bend right] (u3) (u3) edge [bend right] (u5)  (u1) edge [bend left] (u4);
      \draw [potential edge]  (u1) -- (v1)  (u2) -- (v2) (u3) -- (v3) (u4) -- (v4) (u5) -- (v5);
      \draw [forbidden edge] (u2) -- (u3) (u3) -- (u4) (u2) edge [bend right] (u4);
      \draw [potential edge]  (u4) -- (u5) (u1) -- (u5) (u1) -- (u2);
    \end{tikzpicture}
    \caption{}
  \end{subfigure}
  \begin{subfigure}[b]{.24\linewidth}
    \centering
    \begin{tikzpicture}[scale=.75]
      \foreach \i in {1,..., 5} {
          \draw[normal edge] ({54 + \i * (360 / 5)}:1.25) -- ({126 + \i * (360 / 5)}:1.25);
        }
      \foreach \i in {1,..., 5} {
          \node[filled vertex] (v\i) at ({54 + \i * (360 / 5)}:1.25) {};
          \node at ({54 + \i * (360 / 5)}:1.5) {$v_{\i}$};
        }
      \foreach \i in {1,..., 5} {
          \node[filled vertex] (u\i) at ({234 + \i * (360 / 5)}:2) {};
          \node at ({234 + \i * (360 / 5)}:2.25) {$u_{\i}$};
        }
      \draw [normal edge] (u1) -- (v3) (u1) -- (v4);
      \draw [normal edge] (u2) -- (v4) (u2) -- (v5);
      \draw [normal edge] (u3) -- (v5) (u3) -- (v1);
      \draw [normal edge] (u4) -- (v1) (u4) -- (v2);
      \draw [normal edge] (u5) -- (v2) (u5) -- (v3);
      \draw [potential edge] (u2) edge [bend left] (u5) (u1) edge [bend right] (u3) (u3) edge [bend right] (u5) (u1) edge [bend left] (u4);
      \draw [potential edge] (u1) -- (v1)  (u2) -- (v2) (u3) -- (v3) (u4) -- (v4) (u5) -- (v5) (u4) -- (u5) (u2) edge [bend right] (u4) (u3) -- (u4);
      \draw [forbidden edge] (u1) -- (u2);
      \draw [normal edge] (u2) -- (u3) (u1) -- (u5);
    \end{tikzpicture}
    \caption{}
  \end{subfigure}
  \caption{(a) All the five edges $u_iu_{i+1}$ for $i = 1, \ldots, 5$ are present; (b) both $u_{2}u_{3}$ and $u_{3}u_{4}$ are absent while $u_2 u_4$ is present; (c) all the edges among $u_{2}, u_{3}, u_{4}$ are absent; (d) $u_1 u_2$ is absent, while only $u_{2} u_{3}$ and $u_{1}u_{5}$ are present.}
  \label{fig:10 vertices}
\end{figure}
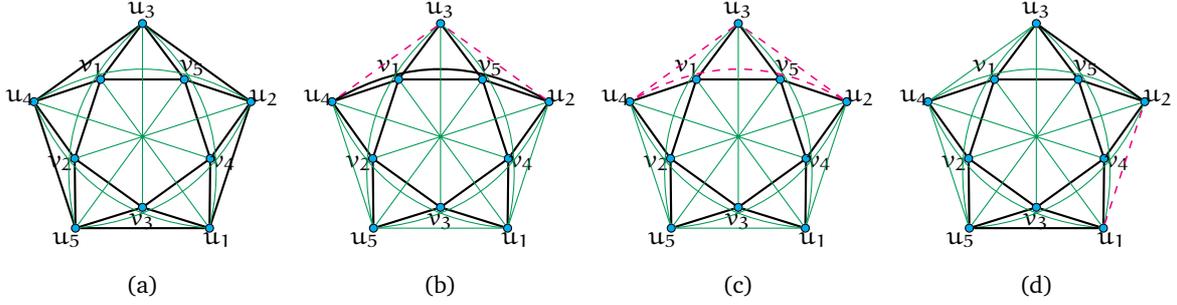

\begin{proposition} \label{prop:contains-all}
  If for all $i = 1, \ldots, 5$, the edge $u_i u_{i+1}$ is in $E(G)$, then $G$ is one of the $(3,3)$-partitionable graphs.
\end{proposition}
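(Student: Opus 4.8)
The plan is to pin down the remaining undetermined edges. Fixing the $5$-hole $C=v_1\cdots v_5$ and the labels $u_1,\dots,u_5$ as in Section~\ref{sec:minimal-graphs}, the edges still to be decided are the five \emph{spokes} $u_iv_i$ and the five \emph{long chords} $u_iu_{i+2}$, $i=1,\dots,5$. By~\eqref{eq:1} with $n=10$, every vertex in $U$ has degree between $3$ and $6$; since $u_i$ is already adjacent to $v_{i+2},v_{i+3},u_{i-1},u_{i+1}$, it can acquire at most two further neighbours, and these must lie among $\{v_i,u_{i-2},u_{i+2}\}$.

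First I would show that no long chord is present. Suppose for contradiction that $u_1u_3\in E(G)$. Since all the outer edges $u_iu_{i+1}$ are present, Obs.\ref{ob:7} (with $i=2$) forbids both $u_1v_1$ and $u_3v_3$ from being absent, so one of them, say $u_1v_1$ (the other case is symmetric), is in $E(G)$. Then $d(u_1)=6$, so $u_1u_4\notin E(G)$; and now Obs.\ref{ob:3} (with $i=3$), applied to the edges $u_3u_4$ and $u_3u_1$, forces $u_3v_3\in E(G)$, whence $d(u_3)=6$ and $u_3u_5\notin E(G)$. At this point only the chords $u_2u_4,u_2u_5$ and the spokes $u_2v_2,u_4v_4,u_5v_5$ remain undecided, and a finite case check --- of the same flavour as Propositions~\ref{prop:missing-consecutive-two-nine-vertices-o}--\ref{prop:contain-u2u3}, using Obs.\ref{ob:3}, Obs.\ref{ob:7}, the degree bound, and the facts that $G$ is $\{K_4,\overline{K_4}\}$-free and contains no $7$-hole (Propositions~\ref{lem:K4 and I4 free} and~\ref{lem:odd hole constrain}) --- shows that every surviving configuration puts a $K_4$, a $\overline{K_4}$, or a $7$-hole into $G$ or $\overline G$, a contradiction. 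Hence all long chords are absent.

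Now $G[U]$ and $G[C]$ are both induced $5$-cycles, $u_i$ is adjacent on $C$ to exactly $v_{i+2}$ and $v_{i+3}$ (and possibly also $v_i$), and the graph is completely determined by $S=\{i:u_iv_i\in E(G)\}$: in the notation of Section~\ref{sec:minimal-graphs}, $G$ is the graph $(123451)$ carrying a circle on precisely the indices of $S$. The dihedral symmetry of the configuration acts on $S$ by rotation and reflection, and there are exactly eight orbits of subsets of $\mathbb{Z}_5$: $\emptyset$, a singleton, a consecutive pair, a non-consecutive pair, a consecutive triple, a triple $\{i,i+1,i+3\}$, the complement of a singleton, and $\mathbb{Z}_5$. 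Up to such a relabelling these give exactly $(123451)$, $(1\mathring{2}3451)$, $(123\mathring{4}\mathring{5}1)$, $(\mathring{1}2\mathring{3}45\mathring{1})$, $(\mathring{1}\mathring{2}\mathring{3}45\mathring{1})$, $(1\mathring{2}3\mathring{4}\mathring{5}1)$, $(\mathring{1}2\mathring{3}\mathring{4}\mathring{5}\mathring{1})$, and $(\mathring{1}\mathring{2}\mathring{3}\mathring{4}\mathring{5}\mathring{1})$, all of which appear among the $(3,3)$-partitionable graphs of Figure~\ref{fig: D graphs}. (The two graphs of that figure not arising here, $(1\mathring{2}43\mathring{5}1)$ and $(\mathring{1}\mathring{2}43\mathring{5}\mathring{1})$, carry a different $5$-cycle on $U$ and will be met in a later case of the analysis.) Therefore $G$ is $(3,3)$-partitionable.

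The main obstacle is the case analysis in the second paragraph: once a long chord is admitted, the three still-undecided spokes and the two remaining chords have to be chased down, and in each of the resulting configurations one must exhibit a forbidden induced subgraph in $G$ or in $\overline G$. Everything else --- the degree bookkeeping and the final identification with the pictures in Figure~\ref{fig: D graphs} --- is immediate from the definitions.
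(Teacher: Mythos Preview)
Your approach matches the paper's, and your second half (the enumeration of spoke sets up to dihedral symmetry) is exactly right; the paper simply asserts it without spelling out the eight orbits.

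The only issue is that the ``main obstacle'' you flag is not an obstacle at all. At the point you have reached --- $u_1v_1,u_3v_3\in E(G)$ and $u_1u_4,u_3u_5\notin E(G)$ --- notice that every edge you list as still undecided ($u_2u_4$, $u_2u_5$, $u_2v_2$, $u_4v_4$, $u_5v_5$) is incident to one of $u_2,v_4,v_5$. Hence the induced subgraph $G-\{u_2,v_4,v_5\}$ is already completely determined, and a direct check shows it is $\overline{C_7}$: its non-edges form the $7$-cycle $u_1\,u_4\,v_3\,v_1\,u_5\,u_3\,v_2\,u_1$. Since $\overline{C_7}$ is t-imperfect, this contradicts the minimal t-imperfection of $G$ immediately, with no further case analysis. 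This is precisely how the paper finishes the argument.
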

\begin{proof}
  We first argue that $U$ induces a cycle.  Suppose for contradiction that $u_{1} u_{3}$ is present.  By Obs.\ref{ob:7} (with $i=2$), at least one of $u_{1} v_{1}$ and $u_{3} v_{3}$ is in $E(G)$.  Since they are symmetric, we consider $u_{1} v_{1} \in E(G)$.   Since $\{u_{1}, u_{3}, u_{4}, v_{1}\}$ is not a clique, $u_{1} u_{4} \notin E(G)$.
  Then by Obs.\ref{ob:3} (with $i=3$), $u_{3} v_{3} \in E(G)$, and since $\{u_{1}, u_{3}, u_{5}, v_{3}\}$ is not a clique, $u_{3} u_{5} \notin E(G)$.
  But then $G- \{v_{4}, v_{5}, u_{2}\}$ is isomorphic to $\overline{C_{7}}$, and $G$ is not minimally t-imperfect.

  Now that $G[U]$ is a $C_5$, dependent on the combination of edges $u_i v_i, i = 1, \ldots, 5$, we are in one of the $(3,3)$-partitionable graphs that contain $(123451)$.
\end{proof}

The following two propositions deal with the case where for some $i = 1, \ldots, 5$, both edges $u_{i}u_{i-1}$ and $u_{i}u_{i+1}$ are absent, Proposition~\ref{prop:missing-consecutive-two} for $u_{i-1} u_{i+1}$ being present, and Proposition~\ref{lem:missing-consecutive-two} for otherwise; see Figure~\ref{fig:10 vertices}(b, c).

\begin{proposition}\label{prop:missing-consecutive-two}
  Let $i = 1, \ldots, 5$.  If neither $u_{i} u_{i-1}$ nor $u_{i} u_{i+1}$ is in $E(G)$, then $u_{i-1} u_{i+1}$ cannot be in $E(G)$ either.
\end{proposition}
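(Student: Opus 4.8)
By the rotational symmetry of the $5$-hole $C$ we may take $i = 3$, and suppose for contradiction that $u_2 u_4 \in E(G)$ while neither $u_2 u_3$ nor $u_3 u_4$ is in $E(G)$. Throughout I would use that $G$ is a core graph, so that neither $G$ nor $\overline{G}$ has an induced $K_4$ or an induced $\overline{K_4}$ (Proposition~\ref{lem:K4 and I4 free}); that $G$ has no induced $\overline{C_7}$, since $\overline{C_7}$ is t-imperfect and $n = 10 > 7$; and that $3 \le d(u) \le 6$ for every $u \in U$ by \eqref{eq:1}.

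The first step is to pin down a few edges around $v_2$, $v_4$, and $u_3$. Applying Obs.\ref{ob:7} with $i = 3$: if neither $u_2 v_2$ nor $u_4 v_4$ were present, it would force $u_3 u_2, u_3 u_4 \in E(G)$, against the assumption; hence at least one of $u_2 v_2, u_4 v_4$ is in $E(G)$, and by the reflection of the framework that fixes $u_3$ and swaps $u_2 \leftrightarrow u_4$, $u_1 \leftrightarrow u_5$, $v_1 \leftrightarrow v_5$, $v_2 \leftrightarrow v_4$, we may assume $u_2 v_2 \in E(G)$. Since $u_4 v_2$ is always an edge, $\{u_2, u_4, v_2\}$ is a triangle; the only neighbors of $v_2$ besides $u_2, u_4$ are $v_1$, $v_3$, and $u_5$, and $v_1, v_3$ are adjacent to neither $u_2$ nor $u_4$, so the triangle's only possible common neighbor is $u_5$. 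Thus $u_5$ cannot be adjacent to both $u_2$ and $u_4$, lest $\{u_2, u_4, u_5, v_2\}$ be a $K_4$.

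The bulk of the proof is then a case analysis driven by $d(u_3) \ge 3$: the only potential edges at $u_3$ are $u_3 v_3$, $u_3 u_1$, and $u_3 u_5$, so at least one of them is present, and I would branch on which. In each branch, Obs.\ref{ob:1}, Obs.\ref{ob:3}, Obs.\ref{ob:7}, and Obs.\ref{ob:10}, together with the absence of induced $K_4$ and $\overline{K_4}$, cascade into forcing more and more edges; the degree bounds then eliminate the dense completions (some vertex of $U$ is driven to a seventh neighbour), while the sparse completions are handled either by exhibiting an induced $\overline{C_7}$ after deleting a suitable triple of vertices (some $v_j$'s, possibly together with one $u_j$), as in the proofs of Propositions~\ref{prop:missing-consecutive-two-nine-vertices-o} and \ref{prop:contains-all}, or by recognizing $G$ as one of the graphs already shown to be t-perfect in Proposition~\ref{lem:t-perfect-graphs} (or as a vertex-deletion of a $(3,3)$-partitionable graph). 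Each outcome contradicts $G$ and $\overline{G}$ being both minimally t-imperfect. The whole argument closely parallels that of Proposition~\ref{prop:missing-consecutive-two-nine-vertices-o}, the novelty here being only that the degree bound is now $6$ rather than $5$, so a forced degree-$6$ vertex is no longer a contradiction and one must push on to degree $7$.

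The main obstacle is the bookkeeping of the case split: since $u_1$, $u_5$, and all five edges $u_i v_i$ are essentially free, many subcases arise, and in each one has to check all three forbidden induced subgraphs ($K_4$, $\overline{K_4}$, $\overline{C_7}$) as well as the degree bounds \eqref{eq:1}; the reflection symmetry recorded above should be invoked consistently to halve the work.
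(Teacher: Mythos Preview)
Your opening matches the paper: take $i=3$, assume $u_2u_4\in E(G)$ for contradiction, apply Obs.\ref{ob:7} to get (up to the reflection you record) $u_2v_2\in E(G)$, and observe that $\{u_2,u_4,u_5,v_2\}$ cannot be a clique.  From there you diverge, and the divergence has a real gap.

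The paper does not branch on which extra neighbour $u_3$ has.  It instead fixes attention on the set $\{u_3,u_4,u_5,v_4\}$: since $u_3u_4\notin E(G)$ by hypothesis and neither $u_3$ nor $u_5$ can be adjacent to $v_4$, this set is independent unless one of the three potential edges $u_4u_5$, $u_3u_5$, $u_4v_4$ is present.  The proof rules each of these out in turn---every sub-branch ending in a $K_4$, a $\overline{K_4}$, a $7$-hole, or an induced $\overline{C_7}$ on seven of the ten vertices---and then $\{u_3,u_4,u_5,v_4\}$ is a $\overline{K_4}$, finishing the contradiction.  This keeps the case analysis to three edges and gives a clean coda.

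Your proposed split on $u_3v_3$, $u_3u_1$, $u_3u_5$ might be workable, but one of your two declared exit routes is not available.  The graphs listed in Proposition~\ref{lem:t-perfect-graphs}, and the vertex-deletions of $(3,3)$-partitionable graphs, all have at most nine vertices, whereas here $n=10$; so ``recognizing $G$ as one of the graphs already shown to be t-perfect'' can never happen.  Every leaf of your tree must end in a structural obstruction ($K_4$, $\overline{K_4}$, $C_7$, induced $\overline{C_7}$, or degree~$\ge 7$), and you have not argued that your split achieves this.  The analogy with Proposition~\ref{prop:missing-consecutive-two-nine-vertices-o} is also misleading: that result is a \emph{classification} (``then $G$ is isomorphic to one of the following''), not an exclusion, so its leaves are allowed to be named graphs rather than contradictions.
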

\begin{proof}
  Assume without loss of generality $i=3$; i.e., both $u_2 u_3$ and $u_3 u_4$ are absent, and  we show by contradiction that $u_{2} u_{4}$ cannot be in $E(G)$.   By Obs.\ref{ob:7} (with $i = 3$), at least one of $u_{2} v_{2}$ and $u_{4} v_{4}$ is in $E(G)$.  Since they are symmetric, we may consider $u_{2} v_{2}\in E(G)$.  %

  Suppose that $u_{4} u_{5} \in E(G)$.  Then $u_{2} u_{5} \notin E(G)$, as otherwise, $\{u_{2}, u_{4}, u_{5}, v_{2}\}$ is a $K_{4}$.  By Obs.\ref{ob:3} (with $i=4$), $u_{4} v_{4} \in E(G)$.  If $u_{1} u_{2} \in E(G)$, then $u_{1} u_{4} \notin E(G)$ because $\{u_{2}, u_{1}, u_{4}, v_{4}\}$ cannot be  a $K_{4}$; by Obs.\ref{ob:5} (with $i=3$), $u_{1} u_{5} \in E(G)$, but then $G- \{u_{3}, v_{1}, v_{5}\}$ is a $\overline{C_{7}}$, a contradiction to Proposition~\ref{lem:odd hole constrain}.  Now that $u_{1} u_{2} \notin E(G)$. The edge $u_{1} u_{5}$ is not in $E(G)$, as otherwise $u_{2} v_{2}$ is in $E(G)$ and both $u_{1} u_{2}$ and $u_{2} u_{5}$ are not, a contradiction to Obs.\ref{ob:1} (with $i = 2$). By Obs.\ref{ob:10} (with $i=1$), $u_{1} v_{1}$ cannot be in $E(G)$ either.  The set $\{u_{2},u_{1},u_{5},v_{1}\}$ is an independent set in $G$, a contradiction.  In the rest, $u_{4} u_{5} \notin E(G)$.

  Suppose $u_{3} u_{5} \in E(G)$.  By Obs.\ref{ob:10} (with $i = 2$), $u_{3} v_{3} \notin E(G)$.  Further, By Obs.\ref{ob:12}, ~\ref{ob:7}, and~\ref{ob:10} (with $i = 1$, $i = 4$, and $i = 4$ respectively), $u_{2} u_{5} \notin E(G)$, $u_{5} v_{5} \in E(G)$, and $u_{4} v_{4} \notin E(G)$.  Hence, $u_{2} v_{4} v_{3} u_{5} u_{3} v_{1} u_{4}$ is a $7$-cycle in $G$, contradicting Proposition~\ref{lem:odd hole constrain}.  Thus, $u_{3} u_{5} \notin E(G)$.

  Suppose $u_{4} v_{4} \in E(G)$.  By Obs.\ref{ob:10} (with $i = 4$), $u_{5} v_{5} \notin E(G)$.  Since $\{u_{1},u_{5},u_{4},v_{5}\}$  cannot be an independent set, at least one of $u_{1} u_{4}$ and $u_{1} u_{5}$ needs to be present.
  By Obs.\ref{ob:1} (with $i = 4$), if $u_{1} u_{5} \in E(G)$, then $u_{1} u_{4} \in E(G)$ as well.
  Therefore, we always have $u_{1} u_{4} \in E(G)$.  Since $\{u_{1}, u_{2}, u_{4}, v_{4}\}$ cannot induce a $K_{4}$ in $G$, $u_{1} u_{2} \notin E(G)$.  By Obs.\ref{ob:10} (with $i=1$), $u_{1} v_{1} \notin E(G)$.  Since $\{u_{2},u_{1},u_{5},v_{1}\}$ is not an independent set, at least one of $u_{1} u_{5}$ and $u_{2} u_{5}$ is in $E(G)$.  If $u_{1} u_{5}$ is in $E(G)$, then by Obs.\ref{ob:1} (with $i = 2$), $u_{2} u_{5} \in E(G)$ and $G- \{u_{3}, v_{1}, v_{5}\}$ is isomorphic to $\overline{C_{7}}$.  Otherwise, by Obs.\ref{ob:12} (with $i = 3$), $u_{2} u_{5}$ has to be absent as well, and then $\{u_{2},u_{1},u_{5},v_{1}\}$ is an independent set.

  Therefore, none of $u_{3} u_{5}$, $u_{4} u_{5}$, and $u_{4} v_{4}$ can be in $E(G)$, and then $\{u_{3},u_{4},u_{5},v_{4}\}$ forms an independent set, contradicting  Proposition~\ref{lem:K4 and I4 free}.
\end{proof}

\begin{proposition}\label{lem:missing-consecutive-two}
  For all $i = 1, \ldots, 5$, at least one of $u_{i} u_{i-1}$ and $u_{i} u_{i+1}$ is in $E(G)$.
\end{proposition}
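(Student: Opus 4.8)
The plan is to argue by contradiction, mirroring and extending the reasoning already used for Proposition~\ref{prop:missing-consecutive-two}. Suppose that for some index $i$, neither $u_i u_{i-1}$ nor $u_i u_{i+1}$ lies in $E(G)$; by the symmetry of the $5$-hole we may assume $i=3$, so that $u_2u_3,u_3u_4\notin E(G)$. Proposition~\ref{prop:missing-consecutive-two} then forces $u_2u_4\notin E(G)$ as well, so $\{u_2,u_3,u_4\}$ is an independent set in $G[U]$; this is precisely the configuration of Figure~\ref{fig:10 vertices}(c). The only forced edges at $u_3$ are $u_3v_5$ and $u_3v_1$, and its remaining potential edges are $u_3v_3$, $u_3u_1$, and $u_3u_5$; by \eqref{eq:1} we have $d(u_3)\ge 3$, so at least one of these three is present. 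Using the reflection that fixes $u_3$ (and swaps $u_1\leftrightarrow u_5$, $u_2\leftrightarrow u_4$, $v_1\leftrightarrow v_5$, $v_2\leftrightarrow v_4$, while preserving $u_3v_3$), I would organize the argument into three cases according to $N(u_3)\cap\{u_1,u_5\}$: (1) it is empty, whence $u_3v_3\in E(G)$ is forced; (2) it is $\{u_1\}$ (say); and (3) it is $\{u_1,u_5\}$.

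In each case I would track which of the undetermined potential edges ($u_jv_j$ for all $j$, and $u_1u_2,u_1u_3,u_1u_4,u_1u_5,u_2u_5,u_3u_5,u_4u_5$) are forced present and which forced absent, using the standard toolkit: the $\{K_4,\overline{K_4}\}$-freeness of core graphs (Proposition~\ref{lem:K4 and I4 free}), Lemma~\ref{lem:connection of u to C} applied not only to $C$ but to the auxiliary five-holes that appear (for example $u_3v_3v_4u_2v_5$ once $u_3v_3\in E(G)$), the degree bounds \eqref{eq:1} ($3\le d(u)\le 6$ for $u\in U$), the absence of an induced $\overline{C_7}$ and of a $7$-hole (Proposition~\ref{lem:odd hole constrain}), and the Observations Obs.\ref{ob:1}--Obs.\ref{ob:12}. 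The recurring endgame is to pin down enough edges that $G$ either contains a $K_4$ or $\overline{K_4}$ on four vertices (typically three of $v_1,\dots,v_5,u_1,u_5$ together with one more), or contains an induced $\overline{C_7}$ after deleting two or three vertices, or forces some $u_j$ to have degree $2$ or $6$; each of these contradicts our hypotheses. In case (3), for instance, I would first analyze the edges $u_1u_5$ and $u_3v_3$ (which cannot both be present, else $\{u_1,u_3,u_5,v_3\}$ is a $K_4$), then the edges $u_1u_2$, $u_4u_5$, $u_1v_1$, $u_5v_5$, repeatedly hitting a $K_4$ or an induced $\overline{C_7}$.

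The main obstacle will be the sheer number of subcases and the bookkeeping of forced edges; compared with the order-nine analysis, the extra hypothesis that \emph{every} $u_j$ has degree at least three both helps (ruling out sparse configurations quickly) and hurts (forcing additional edges that spawn new branches). I expect the $\overline{C_7}$-exclusion to be the principal workhorse for closing the deeper branches, exactly as in the proof of Proposition~\ref{prop:missing-consecutive-two}, and the one point demanding care is to verify, before each application of Lemma~\ref{lem:connection of u to C}, that the five-cycle being used is genuinely induced---several of the natural five-cycles through $u_3$ and two of $v_1,v_2,v_3,v_4,v_5$ are spoiled by the forced edges $u_1v_3$ and $u_5v_3$, so the choice of auxiliary five-holes must be made deliberately.
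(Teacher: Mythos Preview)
Your overall plan---assume $u_2u_3,u_3u_4\notin E(G)$, invoke Proposition~\ref{prop:missing-consecutive-two} to kill $u_2u_4$, then chase forced edges with $\{K_4,\overline{K_4}\}$-freeness, Lemma~\ref{lem:connection of u to C} on auxiliary five-holes, the Observations, and $\overline{C_7}$-exclusion---is exactly the paper's approach. Where you diverge is in organization: you branch on $N(u_3)\cap\{u_1,u_5\}$, whereas the paper makes two linear deductions first that render this split unnecessary.

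First, $u_3v_3\in E(G)$ is forced \emph{unconditionally}, not only in your case~(1): since $\{u_2,u_3,u_4\}$ is independent and $v_3$ is non-adjacent to both $u_2$ and $u_4$, the set $\{u_2,u_3,u_4,v_3\}$ is a $\overline{K_4}$ unless $u_3v_3\in E(G)$. Once that edge is in hand, Lemma~\ref{lem:connection of u to C} on the five-hole $u_3v_3v_2u_2v_5$ kills $u_2v_2$.

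Second---and this is the move you do not mention---the paper \emph{re-applies} Proposition~\ref{prop:missing-consecutive-two} at the shifted index $i=2$: if $u_1u_2$ were absent then (with $u_2u_3$ already absent) $u_1u_3$ would also be absent, making $\{u_1,u_2,u_3,v_2\}$ a $\overline{K_4}$. Hence $u_1u_2\in E(G)$, and Obs.\ref{ob:1} then gives $u_1u_3\in E(G)$ for free. After these forced moves the paper needs only a short four-way check on $\{u_1u_5,\,u_4u_5,\,u_5v_5,\,u_1u_4\}$ (the set $\{u_1,u_4,u_5,v_5\}$ must carry an edge). Your three-case scheme would reach the same contradictions, but you would be re-deriving $u_3v_3$ and $u_1u_2$ separately inside each branch; the paper's ordering collapses your cases~(1)--(3) into a single thread.
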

\begin{proof}
  Assume without loss of generality, let $i=3$.  Suppose for contradiction that neither $u_{2} u_{3}$ nor $u_{3} u_{4}$ is in $E(G)$.  By Proposition~\ref{prop:missing-consecutive-two}, $u_{2} u_{4} \notin E(G)$.  Thus, $u_{3} v_{3}\in E(G)$, as otherwise $\{u_{2},u_{3},u_{4},v_{3}\}$ forms an independent set.
  As a result, $u_{2} v_{2}\not\in E(G)$, as otherwise $u_{4}$ has only one neighbor on the $5$-cycle $u_{3} v_{3} v_{2} u_{2} v_{5}$.
  Moreover, $u_{1} u_{2}$ must be in $G$: Otherwise, by Proposition~\ref{prop:missing-consecutive-two}, (noting that $u_{2} u_{3}\not\in E(G)$,) $u_{1} u_{3}$ cannot be in $E(G)$ either, then $\{u_{1},u_{2},u_{3},v_{2}\}$ forms an independent set.
  By Obs.\ref{ob:1} (with $i = 3$), $u_{1} u_{3} \in E(G)$, and then by Obs.\ref{ob:10} (with $i = 3$), $u_{4} v_{4} \notin E(G)$.   Since $\{u_{1},u_{5},u_{4},v_{5}\}$ does not induce an independent set, at least one of $u_{1} u_{5}$, $u_{4} u_{5}$, $u_{5} v_{5}$, and $u_{1} u_{4}$ is in $E(G)$.

  First, suppose that $u_{1} u_{5}$ is in $E(G)$.   Then $u_{3} u_{5}$ is not in $E(G)$, as otherwise $\{u_{3}, u_{5}, u_{1}, v_{3}\}$ induces a $K_{4}$.   By Obs.\ref{ob:3} (with $i=1$), $u_{1} v_{1} \in E(G)$. The edge $u_{4} u_{5} \notin E(G)$, as otherwise contracting Obs.\ref{ob:1} (with $i = 3$). But then $\{u_{3},u_{4},u_{5},v_{4}\}$ forms an independent set.

  Second, suppose that $u_{4} u_{5}$ is in $E(G)$.   By Obs.\ref{ob:1} (with $i = 3$), $u_{3} u_{5} \in E(G)$.
  If $u_{1} v_{1}$ is in $E(G)$, then by Obs.\ref{ob:1} (with $i=1$), $u_{1} u_{4} \in E(G)$, which means that $G- \{u_{2}, v_{4}, v_{5}\}$ is isomorphic to $\overline{C_{7}}$.   Thus, $u_{1} v_{1} \notin E(G)$; a symmetric argument enables us to conclude that $u_{5} v_{5} \notin E(G)$.
  Since neither of $u_{2} u_{4}$ and $u_{5} v_{5}$ is in $E(G)$, from Obs.\ref{ob:3} (with $i = 5$) we can conclude that, $u_{2} u_{5} \notin E(G)$.   By a symmetric argument we have $u_{1} u_{4}$ is not in $E(G)$ either.
  Now that none of $u_{1} v_{1}$, $u_{5} v_{5}$, $u_{2} u_{5}$, and $u_{1} u_{4}$ is in $E(G)$, there is a $7$-cycle $u_{5} u_{4} v_{1} v_{5} u_{2} u_{1} v_{3}$.   Therefore, $u_{4} u_{5} \notin E(G)$.

  Third, suppose $u_{5} v_{5}$ is in $E(G)$.   By Obs.\ref{ob:1} (with $i = 5$), $u_{2} u_{5} \in E(G)$.  The edge $u_{3} u_{5} \in E(G)$, as otherwise $\{u_{3},u_{4},u_{5},v_{4}\}$ forms an independent set. But then $G- \{v_{1}, v_{2}, u_{4}\}$ is isomorphic to $\overline{C_{7}}$.   Therefore, $u_{5} v_{5} \notin E(G)$.

  Last, suppose $u_{1} u_{4}$ is in $E(G)$.   By Obs.\ref{ob:12} (with $i = 2$), $u_{3} u_{5} \notin E(G)$.   But then $\{u_{3},u_{4},u_{5},v_{4}\}$ forms an independent set.

  In summary, none of $u_{1} u_{5}$, $u_{4} u_{5}$, $u_{5} v_{5}$, and $u_{1} u_{4}$ can be in $E(G)$, and thus $\{u_{1},u_{5},u_{4},v_{5}\}$ forms an independent set.
\end{proof}

In the remaining case, $u_{i} u_{i+1}$ for some $i = 1, \ldots, 5$ is absent, but both $u_{i+1} u_{i+2}$ and $u_{i} u_{i-1}$ are present.  Moreover, by Proposition~\ref{lem:missing-consecutive-two}, at least one of $u_{i+2} u_{i+3}$ and $u_{i-1} u_{i-2}$ is in $E(G)$.     See Figure~\ref{fig:10 vertices}(d).

\begin{proposition}\label{lem:one-missing}
  If there is an $i = 1, \ldots, 5$ such that $u_iu_{i+1}$ is not in $E(G)$, then $G$ is one of the $(3, 3)$-graphs.
\end{proposition}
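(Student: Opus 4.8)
The plan is to run a case analysis in the spirit of Propositions~\ref{prop:missing-consecutive-two-nine-vertices-o}--\ref{prop:contain-u2u3}, driven by the Observations together with the short forbidden subgraphs. Throughout, I would use that $G$ has order ten, so by Proposition~\ref{lem:odd hole constrain} it has no induced $C_7$ or $\overline{C_7}$, by Proposition~\ref{lem:K4 and I4 free} no $K_4$ or $\overline{K_4}$, and by \eqref{eq:1} every vertex of $U$ (and, since $\overline G$ has the same structure, every vertex of $C$) has degree between $3$ and $6$; by Lemma~\ref{lem:connection of u to C} each $u_i$ has exactly the two $C$-neighbors $v_{i+2},v_{i+3}$, so the only potential edges are the five $u_iu_{i+1}$, the five $u_iu_{i+2}$, and the five $u_iv_i$.

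\textbf{Step 1 (normalizing the missing cycle edge).} Assume without loss of generality $u_1u_2\notin E(G)$. Proposition~\ref{lem:missing-consecutive-two}, applied at $u_1$, at $u_2$, and at $u_4$, gives $u_1u_5,u_2u_3\in E(G)$ and forces at least one of $u_3u_4,u_4u_5$ to be present; see Figure~\ref{fig:10 vertices}(d). The scaffold (the $5$-hole together with these forced edges) is invariant under the reflection $\sigma$ swapping $u_1\leftrightarrow u_2$, $u_3\leftrightarrow u_5$, $v_1\leftrightarrow v_2$, $v_3\leftrightarrow v_5$ and fixing $u_4,v_4$; since $\sigma$ interchanges $u_3u_4$ and $u_4u_5$, I would split into the case $u_3u_4,u_4u_5\in E(G)$ (only $u_1u_2$ missing among cycle edges) and the case $u_3u_4\in E(G)$, $u_4u_5\notin E(G)$ (with $u_1u_2$ and $u_4u_5$ both missing), the latter carrying a further symmetry $\tau$ swapping $u_2\leftrightarrow u_4$, $u_1\leftrightarrow u_5$, $v_1\leftrightarrow v_5$, $v_2\leftrightarrow v_4$ and fixing $u_3,v_3$. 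These two cases are exhaustive up to $\sigma$.

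\textbf{Step 2 (resolving diagonals and spokes).} In each case I would first decide the five diagonals $u_iu_{i+2}$. For a present diagonal $u_au_c$ whose skipped vertex is $u_b$, Observation~\ref{ob:7} pins down the two cycle edges at $u_b$ and the spokes $u_av_a,u_cv_c$; Observations~\ref{ob:1},~\ref{ob:3},~\ref{ob:5},~\ref{ob:10},~\ref{ob:12} similarly tie each spoke $u_iv_i$ to the adjacent cycle edges and diagonals. Combining these with the exclusions of $K_4$ and $\overline{K_4}$ (a diagonal plus a cycle edge plus a common $C$-neighbor, or the complementary independent quadruple), with the degree cap $d(u_i)\le 6$, and with the non-existence of an induced $\overline{C_7}$ (which typically appears as $G$ minus three well-chosen vertices), prunes the possibilities to a short list; every surviving completion is then read off and checked, after a suitable relabeling of the $5$-hole $C$, to be isomorphic to one of the ten $(3,3)$-partitionable graphs of Figure~\ref{fig: D graphs}, i.e.\ to one of $(123451)$, $(1\mathring{2}3451)$, $(\mathring{1}2\mathring{3}45\mathring{1})$, $(123\mathring{4}\mathring{5}1)$, $(1\mathring{2}43\mathring{5}1)$ or a complement thereof. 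Together with Proposition~\ref{prop:contains-all} this exhausts all core graphs of order ten satisfying \eqref{eq:1}.

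The main obstacle is bookkeeping rather than any single deduction: each elementary step is one application of an Observation or of a two/three-vertex forbidden configuration, but there are on the order of a few dozen sub-cases after Step 1, and the two genuine difficulties are (i) organizing them so that none is omitted or double-counted, which is exactly where the symmetries $\sigma$ and $\tau$ earn their keep, and (ii) correctly recognizing each terminal configuration inside the list of $(3,3)$-partitionable graphs — in particular, for a graph such as $(1\mathring{2}43\mathring{5}1)$ whose $U$-part is a $5$-cycle in a non-standard cyclic order, one must choose the right induced $5$-hole to play the role of $C$ before the isomorphism becomes visible.
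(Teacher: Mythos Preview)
Your plan is correct and matches the paper's own proof almost step for step: the paper also normalizes to $u_1u_2\notin E(G)$, invokes Proposition~\ref{lem:missing-consecutive-two} to force $u_1u_5,u_2u_3\in E(G)$ and at least one of $u_3u_4,u_4u_5$, then splits on whether both or only one of $u_3u_4,u_4u_5$ is present, using precisely your symmetries $\sigma$ and $\tau$ (phrased there as ``by a symmetric argument''). The only refinement worth noting is that the execution is tighter than you anticipate: the first sub-case (all four of $u_2u_3,u_3u_4,u_4u_5,u_5u_1$ present) is eliminated outright by showing $u_3u_5,u_1v_1,u_2v_2,u_1u_3,u_2u_5$ are all absent and exhibiting the $7$-cycle $u_1v_4u_2u_3v_1v_2u_5$, while the second sub-case pins down every potential edge except $u_3v_3$ and yields exactly $(1\mathring{2}43\mathring{5}1)$ or its complement $(\mathring{1}\mathring{2}43\mathring{5}\mathring{1})$ --- so only two of the ten $(3,3)$-partitionable graphs, not the full list, actually occur here, and the ``few dozen sub-cases'' collapse to a short linear chain of deductions.
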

\begin{proof}
  Without loss of generality, let $i=1$.  Then $u_{1} u_{2} \notin E(G)$, $u_{2} u_{3}$ and $u_{1} u_{5}$ are in $E(G)$, and at least one of $u_{3} u_{4}$ and $u_{4} u_{5}$ is in $E(G)$.  We show by contradiction that $u_{3} u_{4}$ and $u_{4} u_{5}$ cannot be both in $E(G)$.   In particular, we show that
  none of $u_{1} v_{1}$, $u_{2} v_{2}$, $u_{1} u_{3}$, $u_{2} u_{5}$, and $u_{3} u_{5}$ is in $E(G)$, and then $u_{1} v_{4} u_{2} u_{3} v_{1} v_{2} u_{5}$ is a ${7}$-cycle.
  \begin{itemize}
    \item If $u_{3} u_{5}$ is in $E(G)$, then by Obs.\ref{ob:7} (with $i=4$), at least one of $u_{3} v_{3}$ and $u_{5} v_{5}$ is in $E(G)$.
          If $u_{3} v_{3}$ is in $E(G)$ but $u_{5} v_{5}$ is not, then by Obs.\ref{ob:3} (with $i=5$), $u_{1} u_{3} \in E(G)$, which means $d(u_{3}) = 7$, a contradiction.
          A symmetric argument applies if $u_{5} v_{5}$ is in $E(G)$ but $u_{3} v_{3}$ is not.
          Hence, both $u_{3} v_{3}$ and $u_{5} v_{5}$ are in $E(G)$.   As a result, neither $u_{1} u_{3}$ nor $u_{2} u_{5}$ can be in $E(G)$, as otherwise $\{u_{3},u_{1},u_{5},v_{3}\}$ or, respectively, $\{u_{5},u_{2},u_{3},v_{5}\}$ forms a clique.   However, the vertex $v_{3}$ has four neighbors on a $5$-cycle $u_{3} u_{5} u_{1} v_{4} u_{2}$.   Therefore, $u_{3} u_{5} \notin E(G)$.

    \item If $u_{1} v_{1}$ is in $E(G)$, then by Obs.\ref{ob:1} (with $i=1$), $u_{1} u_{3} \in E(G)$.   Note that $u_{1} u_{4} \notin E(G)$, as otherwise $\{u_{1},u_{3},u_{4},v_{1}\}$ forms a cliqued.   By Obs.\ref{ob:3} (with $i=3$), $u_{3} v_{3} \in E(G)$.   But then $G- \{v_{4}, v_{5}, u_{2}\}$ is isomorphic to $\overline{C_{7}}$.   Therefore, $u_{1} v_{1} \notin E(G)$.     By a symmetric argument, $u_{2} v_{2} \notin E(G)$.

    \item Now that none of $u_{1} v_{1}$, $u_{2} v_{2}$, and $u_{3} u_{5}$ is in $E(G)$, from Obs.\ref{ob:3} (with $i=1$) it can be inferred $u_{1} u_{3} \notin E(G)$, and then by Obs.\ref{ob:3} (with $i=2$), $u_{2} u_{5} \notin E(G)$.
  \end{itemize}

  Thus, at most one of $u_{3} u_{4}$ and $u_{4} u_{5}$ is in $E(G)$.   We may assume without loss of generality that $u_{3} u_{4}$ is in $E(G)$ and $u_{4} u_{5}$ is not; the other case is symmetric.

  We argue that none of $u_{1} u_{3}$, $u_{3} u_{5}$, $u_{1} v_{1}$, and $u_{5} v_{5}$ can be in $E(G)$.
  Suppose that $u_{1} u_{3}$ is in $E(G)$.
  By Obs.\ref{ob:3} (with $i=1$), at least one of $u_{1} v_{1}$ and $u_{3} u_{5}$ is in $E(G)$.   If $u_{3} u_{5}\in E(G)$, then $u_{3} v_{3} \notin E(G)$, as otherwise $\{u_{3},u_{1},u_{5},v_{3}\}$ forms a clique.
  On the other hand, by Obs.\ref{ob:7} (with $i=2$), at least one of $u_{1} v_{1}$ and $u_{3} v_{3}$ is in $E(G)$.   Therefore, we always have $u_{1} v_{1} \in E(G)$.
  Then $u_{1} u_{4} \notin E(G)$, as otherwise $\{u_{1},u_{3},u_{4},v_{1}\}$ forms a clique.
By Obs.\ref{ob:3} (with $i = 3$), $u_{3} v_{3} \in E(G)$, which further implies $u_{3} u_{5}\not\in E(G)$ because $d(u_{3}) < 6$. But then all of $u_{1} u_{5}$, $u_{1} u_{3}$, $u_{3} u_{4}$, and $u_{3} v_{3}$ are in $E(G)$ and none of $u_{1} u_{4}$, $u_{3} u_{5}$, and $u_{4} u_{5}$ is in $E(G)$, contradicting Obs.\ref{ob:5} (with $i = 2$).
  Therefore, $u_{1} u_{3} \notin E(G)$.
  By a symmetric argument, we can conclude that $u_{3} u_{5}$ cannot be in $E(G)$ either.
  Now that none of $u_{1} u_{3}$, $u_{3} u_{5}$, $u_{1} u_{2}$, and $u_{4} u_{5}$ is in $E(G)$, together with the fact that both $u_{2} u_{3}$ and $u_{3} u_{4}$ are in $E(G)$, from Obs.\ref{ob:1} (with $i=1$ and $i=5$), it can be inferred that both $u_{1} v_{1}$ and $u_{5} v_{5}$ cannot be in $E(G)$.

  At least one of $u_{4} v_{4}$ and $u_{1} u_{4}$ is in $E(G)$, as otherwise $u_{1} v_{4} v_{5} u_{3} u_{4} v_{2} u_{5}$ is a $7$-cycle.
  If $u_{4} v_{4}$ is in $E(G)$, then Obs.\ref{ob:1} (with $i=4$) will force $u_{1} u_{4}$ in $E(G)$ as well.   On the other hand,
  $u_{1} u_{4}$ is in $E(G)$ and Obs.\ref{ob:7} (with $i=5$) will force $u_{4} v_{4}$ in $E(G)$ as well.  Therefore, both $u_{4} v_{4}$ and $u_{1} u_{4}$ are in $E(G)$.  Moreover, at least one of $u_{2} v_{2}$ and $u_{2} u_{5}$ is in $E(G)$, as otherwise $u_{5} v_{2} v_{1} u_{3} u_{2} v_{4} u_{1}$ is a $7$-cycle.  By a symmetric argument, both $u_{2} v_{2}$ and $u_{2} u_{5}$ are in $E(G)$.  Note that $u_{2} u_{4}$ cannot be in $E(G)$, as otherwise $G- \{v_{1}, v_{5}, u_{3}\}$ is isomorphic to $\overline{C_{7}}$.
  Dependent on whether $u_{3} v_{3}$ is in $E(G)$, the graph is isomorphic to either $(1\mathring{2}43\mathring{5}1)$ or its complement.
\end{proof}

The discussion on the order of $G$ and Propositions \ref{prop:contains-all}--\ref{lem:one-missing} imply Theorem~\ref{thm:main}.

\bibliographystyle{plainurl}

\newpage\appendix
\section*{Appendix: Omitted Proofs} 
\label{sec:t-perfect-graphs}

The following sufficient condition for t-perfection is due to Benchetrit~\cite{benchetrit-15-thesis}.

\begin{proposition}[\cite{benchetrit-15-thesis}]
  \label{prop:nice graph}
  Let $K$ be a clique of a graph $G$.  If $G-v$ is t-perfect for every $v \in K$, then $G$ is t-perfect.
\end{proposition}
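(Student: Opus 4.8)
The plan is to show directly that $P(G)$ equals the independent set polytope of $G$, which I will write $\mathrm{STAB}(G)$. Since $\mathrm{STAB}(G)\subseteq P(G)$ for every graph, and since any $0/1$ vector in $P(G)$ satisfies all the edge inequalities and is therefore the characteristic vector of an independent set, it suffices to prove that every vertex of $P(G)$ is integral. So I would fix a vertex $\bar x$ of $P(G)$ and argue that $\bar x\in\mathrm{STAB}(G)$; suppose for contradiction that $\bar x$ is fractional.

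The first move is to dispose of the case $\bar x_v=0$ for some $v\in K$. Then $\bar x$ lies on the face $\{x: x_v=0\}\cap P(G)$, which equals $P(G-v)$ by the fact recorded just after Proposition~\ref{prop:perfect and t-perfect}. Since $G-v$ is t-perfect, this face is the independent set polytope of $G-v$, and $\bar x$, viewed with its $v$-coordinate (equal to $0$) restored, lies in $\mathrm{STAB}(G)$, a contradiction. The same observation supplies a fact I would use repeatedly: for every $x\in P(G)$ and every $v\in K$, the point $x-x_v e_v$ obtained by zeroing the $v$-th coordinate of $x$ lies in $P(G-v)=\mathrm{STAB}(G-v)\subseteq\mathrm{STAB}(G)$.

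It remains to handle a fractional vertex $\bar x$ with $\bar x_v>0$ for every $v\in K$, and this is where the clique structure has to be exploited. Since $K$ is a clique, every independent set of $G$ meets $K$ in at most one vertex, so $\mathrm{STAB}(G)$ is the convex hull of the zero-extension of $\mathrm{STAB}(G-K)$ together with the faces $F_v=\{x\in\mathrm{STAB}(G): x_v=1\}$ for $v\in K$; note that on $F_v$ every vertex of $N(v)$, in particular every vertex of $K\setminus\{v\}$, takes the value $0$. The goal is to realize $\bar x$ as the convex combination that assigns weight $\bar x_v$ to a suitable point of $F_v$ for each $v\in K$ and weight $1-\sum_{v\in K}\bar x_v$ to a point of $\mathrm{STAB}(G-K)$, with the coordinates outside $K$ reconstructed from the representations $\bar x-\bar x_v e_v\in\mathrm{STAB}(G-v)$ furnished above; the clique inequality $\sum_{v\in K}\bar x_v\le 1$ is what keeps the last coefficient nonnegative.

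I expect the reconstruction just described to be the main obstacle: one must choose the pieces so that each genuinely lies in its prescribed face and the weighted sum returns $\bar x$ exactly on $V(G)\setminus K$, i.e.\ show that the mass $\bar x$ puts on the clique $K$ can be redistributed without violating any non-negativity, edge, or odd-cycle inequality. This is the only step that uses the hypothesis for \emph{every} $v\in K$ rather than for a single vertex, and it is where the substance of the proposition lies; everything before it is bookkeeping about faces of $P(G)$.
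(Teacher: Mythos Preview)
The paper does not supply a proof; the proposition is quoted from Benchetrit's thesis and used as a black box, always with $K$ a triangle, to certify the t-perfection of the graphs in Tables~\ref{table:2} and~\ref{table:3}. So there is nothing in the paper to compare your argument against.

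On the merits of your plan: you should be aware that the statement as printed is false without restricting $|K|$. Take $G=K_4$ and $K=V(G)$; then $G-v\cong K_3$ is t-perfect for every $v\in K$, yet $K_4$ is not t-perfect. The step where this bites is exactly the one you flag: the clique inequality $\sum_{v\in K}\bar x_v\le 1$ is a defining constraint of $P(G)$ only when $|K|\le 3$ (then it is a nonnegativity, edge, or odd-cycle constraint), and without it your ``last coefficient'' $1-\sum_{v\in K}\bar x_v$ can be negative. So the proposition should be read with $|K|\le 3$, which is all the paper ever needs.

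Even granting $|K|\le 3$, your Case~2 is, as you yourself say, only a plan. You have the pieces $\bar x-\bar x_v e_v\in\mathrm{STAB}(G-v)$ for each $v\in K$, but you do not show how to assemble them into a convex representation of $\bar x$ by independent-set vectors of $G$: the separate decompositions live on different ground sets and need not agree on $V(G)\setminus K$, so ``assigning weight $\bar x_v$ to a point of $F_v$'' is not yet a definition of that point. This reconstruction is precisely the substantive content of the proposition, and until it is carried out the proposal remains a sketch rather than a proof.
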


\paragraph{Proposition~\ref{lem:t-perfect-graphs} (restated).}
  \textit{The following graphs are t-perfect: $(12)$, $(1\|\mathring{2})$, $(1\mathring{2})$, $(\mathring{1}\|\mathring{2})$, $(\mathring{1}\mathring{2})$, $(1\|23)$, $(\mathring{3}1\mathring{2})$, $(\mathring{1}\mathring{3}\|\mathring{2})$, $(1\|\mathring{2}4)$, $(14\mathring{2})$, $(1\|\mathring{2}\mathring{4})$, $(1\mathring{4}\mathring{2})$, $(\mathring{1}\|\mathring{2}\|\mathring{4})$, $(\mathring{1}\mathring{2}4)$, $(\mathring{1}\mathring{2}\mathring{4})$, $(1\mathring{3}\mathring{4}\mathring{2})$, $(1\mathring{3}\mathring{4}2)$, $(\mathring{1}\mathring{3}\mathring{4}2)$, $(\mathring{1}\mathring{2}\mathring{4}3)$, $(2\mathring{3}14)$, $(2\mathring{3}\mathring{1}4)$, $(23\mathring{1}4)$, $(1\mathring{4}32)$, $(\mathring{1}\mathring{2}\mathring{4}\mathring{3}\mathring{1})$, $(1\mathring{2}\mathring{4}3)$, $(1\mathring{3}\mathring{2}41)$, $(1\mathring{3}\mathring{2}4)$, $(14\|23)$, $(1\mathring{4}\mathring{3}2)$, $(1\mathring{3}\|\mathring{2}4)$, and $(\mathring{2}41\mathring{3})$.}
\begin{proof}
  Graphs $(12)$, $(1\|\mathring{2})$, $(1\mathring{2})$, $(\mathring{1}\|\mathring{2})$, and $(\mathring{1}\mathring{2})$ are almost bipartite graph, hence t-perfect.
  For each of the other graphs, we find a $3$-clique $K$ and then use Proposition~\ref{prop:nice graph}.  To show $G-v$ is t-perfect for every $v \in K$, we either directly show that it is isomorphic to a t-perfect graph, or show that it is a $K_4$-free perfect graph (Proposition~\ref{prop:perfect and t-perfect}).  The details are listed in Table \ref{table:2}, where $\star$ means that the graph is a $K_4$-free perfect graph.
\end{proof}

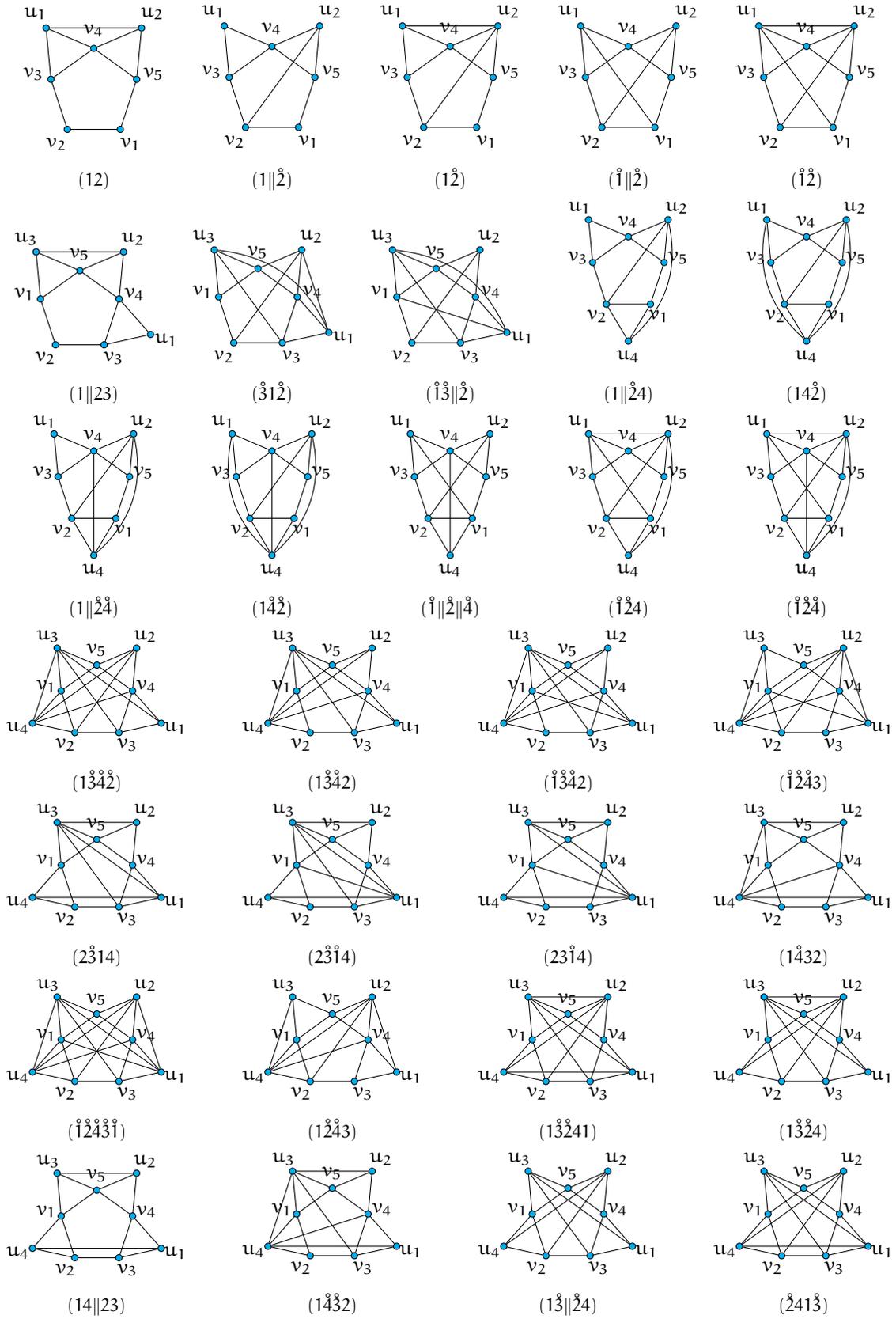
\begin{figure}[h]
  \centering
  \begin{subfigure}[b]{.18\textwidth}
    \centering
    \begin{tikzpicture} [scale=0.6]
      \foreach \i in {1,..., 5} {
          \draw ({18 - \i * (360 / 5)}:1.25) -- ({90 - \i * (360 / 5)}:1.25);
        }
      \foreach \i in {1,..., 5} {
          \node[filled vertex] (v\i) at ({18 - \i * (360 / 5)}:1.25) {};
          \node at ({18 - \i * (360 / 5)}:1.75) {$v_{\i}$};
        }
      \foreach \i in {1,..., 2} {
          \node[filled vertex] (u\i) at ({198 - \i * (360 / 5)}:2.25) {};
          \node at ({198 - \i * (360 / 5)}:2.75) {$u_{\i}$};
        }
      \draw (u1) -- (v3) (u1) -- (v4);
      \draw (u2) -- (v4) (u2) -- (v5);
      \draw (u1) -- (u2);
    \end{tikzpicture}
    \caption*{$(12)$}
  \end{subfigure}
  \begin{subfigure}[b]{.18\textwidth}
    \centering
    \begin{tikzpicture} [scale=0.6]
      \foreach \i in {1,..., 5} {
          \draw ({18 - \i * (360 / 5)}:1.25) -- ({90 - \i * (360 / 5)}:1.25);
        }
      \foreach \i in {1,..., 5} {
          \node[filled vertex] (v\i) at ({18 - \i * (360 / 5)}:1.25) {};
          \node at ({18 - \i * (360 / 5)}:1.75) {$v_{\i}$};
        }
      \foreach \i in {1,..., 2} {
          \node[filled vertex] (u\i) at ({198 - \i * (360 / 5)}:2.25) {};
          \node at ({198 - \i * (360 / 5)}:2.75) {$u_{\i}$};
        }
      \draw (u1) -- (v3) (u1) -- (v4);
      \draw (u2) -- (v4) (u2) -- (v5);
      \draw (u2) -- (v2);
    \end{tikzpicture}
    \caption*{$(1\|\mathring{2})$}
  \end{subfigure}
  \begin{subfigure}[b]{.18\textwidth}
    \centering
    \begin{tikzpicture} [scale=0.6]
      \foreach \i in {1,..., 5} {
          \draw ({18 - \i * (360 / 5)}:1.25) -- ({90 - \i * (360 / 5)}:1.25);
        }
      \foreach \i in {1,..., 5} {
          \node[filled vertex] (v\i) at ({18 - \i * (360 / 5)}:1.25) {};
          \node at ({18 - \i * (360 / 5)}:1.75) {$v_{\i}$};
        }
      \foreach \i in {1,..., 2} {
          \node[filled vertex] (u\i) at ({198 - \i * (360 / 5)}:2.25) {};
          \node at ({198 - \i * (360 / 5)}:2.75) {$u_{\i}$};
        }
      \draw (u1) -- (v3) (u1) -- (v4);
      \draw (u2) -- (v4) (u2) -- (v5);
      \draw (u2) -- (v2);
      \draw (u1) -- (u2);
    \end{tikzpicture}
    \caption*{$(1\mathring{2})$}
  \end{subfigure}
  \begin{subfigure}[b]{.18\textwidth}
    \centering
    \begin{tikzpicture} [scale=0.6]
      \foreach \i in {1,..., 5} {
          \draw ({18 - \i * (360 / 5)}:1.25) -- ({90 - \i * (360 / 5)}:1.25);
        }
      \foreach \i in {1,..., 5} {
          \node[filled vertex] (v\i) at ({18 - \i * (360 / 5)}:1.25) {};
          \node at ({18 - \i * (360 / 5)}:1.75) {$v_{\i}$};
        }
      \foreach \i in {1,..., 2} {
          \node[filled vertex] (u\i) at ({198 - \i * (360 / 5)}:2.25) {};
          \node at ({198 - \i * (360 / 5)}:2.75) {$u_{\i}$};
        }
      \draw (u1) -- (v3) (u1) -- (v4);
      \draw (u2) -- (v4) (u2) -- (v5);
      \draw (u1) -- (v1) (u2) -- (v2);
    \end{tikzpicture}
    \caption*{$(\mathring{1}\|\mathring{2})$}
  \end{subfigure}
  \begin{subfigure}[b]{.18\textwidth}
    \centering
    \begin{tikzpicture} [scale=0.6]
      \foreach \i in {1,..., 5} {
          \draw ({18 - \i * (360 / 5)}:1.25) -- ({90 - \i * (360 / 5)}:1.25);
        }
      \foreach \i in {1,..., 5} {
          \node[filled vertex] (v\i) at ({18 - \i * (360 / 5)}:1.25) {};
          \node at ({18 - \i * (360 / 5)}:1.75) {$v_{\i}$};
        }
      \foreach \i in {1,..., 2} {
          \node[filled vertex] (u\i) at ({198 - \i * (360 / 5)}:2.25) {};
          \node at ({198 - \i * (360 / 5)}:2.75) {$u_{\i}$};
        }
      \draw (u1) -- (v3) (u1) -- (v4);
      \draw (u2) -- (v4) (u2) -- (v5);
      \draw (u1) -- (v1) (u2) -- (v2);
      \draw (u1) -- (u2);
    \end{tikzpicture}
    \caption*{$(\mathring{1}\mathring{2})$}
  \end{subfigure}

  \begin{subfigure}[b]{.18\textwidth}
    \centering
    \begin{tikzpicture} [scale=0.55]
      \foreach \i in {1,..., 5} {
          \draw ({18 - \i * (360 / 5)}:1.25) -- ({90 - \i * (360 / 5)}:1.25);
        }
      \foreach \i in {1,..., 5} {
          \node[filled vertex] (v\i) at ({90 + \i * (360 / 5)}:1.25) {};
          \node at ({90 + \i * (360 / 5)}:1.75) {$v_{\i}$};
        }
      \foreach \i in {1,..., 3} {
          \node[filled vertex] (u\i) at ({-90 + \i * (360 / 5)}:2.25) {};
          \node at ({-90 + \i * (360 / 5)}:2.75) {$u_{\i}$};
        }
      \draw (u1) -- (v3) (u1) -- (v4);
      \draw (u2) -- (v4) (u2) -- (v5);
      \draw (u3) -- (v1) (u3) -- (v5);
      \draw (u2) -- (u3);
    \end{tikzpicture}
    \caption*{$(1\|23)$}
  \end{subfigure}
  \begin{subfigure}[b]{.18\textwidth}
    \centering
    \begin{tikzpicture} [scale=0.55]
      \foreach \i in {1,..., 5} {
          \draw ({18 - \i * (360 / 5)}:1.25) -- ({90 - \i * (360 / 5)}:1.25);
        }
      \foreach \i in {1,..., 5} {
          \node[filled vertex] (v\i) at ({90 + \i * (360 / 5)}:1.25) {};
          \node at ({90 + \i * (360 / 5)}:1.75) {$v_{\i}$};
        }
      \foreach \i in {1,..., 3} {
          \node[filled vertex] (u\i) at ({-90 + \i * (360 / 5)}:2.25) {};
          \node at ({-90 + \i * (360 / 5)}:2.75) {$u_{\i}$};
        }
      \draw (u1) -- (v3) (u1) -- (v4);
      \draw (u2) -- (v4) (u2) -- (v5);
      \draw (u3) -- (v1) (u3) -- (v5);
      \draw (u2) -- (v2) (u3) -- (v3);
      \draw (u1) edge [bend right] (u3);
      \draw (u1) -- (u2);
    \end{tikzpicture}
    \caption*{$(\mathring{3}1\mathring{2})$}
  \end{subfigure}
  \begin{subfigure}[b]{.18\textwidth}
    \centering
    \begin{tikzpicture} [scale=0.55]
      \foreach \i in {1,..., 5} {
          \draw ({18 - \i * (360 / 5)}:1.25) -- ({90 - \i * (360 / 5)}:1.25);
        }
      \foreach \i in {1,..., 5} {
          \node[filled vertex] (v\i) at ({90 + \i * (360 / 5)}:1.25) {};
          \node at ({90 + \i * (360 / 5)}:1.75) {$v_{\i}$};
        }
      \foreach \i in {1,..., 3} {
          \node[filled vertex] (u\i) at ({-90 + \i * (360 / 5)}:2.25) {};
          \node at ({-90 + \i * (360 / 5)}:2.75) {$u_{\i}$};
        }
      \draw (u1) -- (v3) (u1) -- (v4);
      \draw (u2) -- (v4) (u2) -- (v5);
      \draw (u3) -- (v1) (u3) -- (v5);
      \draw (u1) -- (v1) (u2) -- (v2) (u3) -- (v3);
      \draw (u1) edge [bend right] (u3);
    \end{tikzpicture}
    \caption*{$(\mathring{1}\mathring{3}\|\mathring{2})$}
  \end{subfigure}
  \begin{subfigure}[b]{.18\textwidth}
    \centering
    \begin{tikzpicture} [scale=0.5]
      \foreach \i in {1,..., 5} {
          \draw ({18 - \i * (360 / 5)}:1.25) -- ({90 - \i * (360 / 5)}:1.25);
        }
      \foreach \i in {1,..., 5} {
          \node[filled vertex] (v\i) at ({18 - \i * (360 / 5)}:1.25) {};
          \node at ({18 - \i * (360 / 5)}:1.75) {$v_{\i}$};
        }
      \node[filled vertex] (u1) at (126:2.25) {};
      \node[filled vertex] (u2) at (54:2.25) {};
      \node[filled vertex] (u4) at (270:2.25) {};
      \node at (126:2.75) {$u_{1}$};
      \node at (54:2.75) {$u_{2}$};
      \node at (270:2.75) {$u_{4}$};
      \draw (u1) -- (v3) (u1) -- (v4);
      \draw (u2) -- (v4) (u2) -- (v5);
      \draw (u4) -- (v1) (u4) -- (v2);
      \draw (u2) -- (v2);
      \draw (u2) edge [bend left] (u4);
    \end{tikzpicture}
    \caption*{$(1\|\mathring{2}4)$}
  \end{subfigure}
  \begin{subfigure}[b]{.18\textwidth}
    \centering
    \begin{tikzpicture} [scale=0.5]
      \foreach \i in {1,..., 5} {
          \draw ({18 - \i * (360 / 5)}:1.25) -- ({90 - \i * (360 / 5)}:1.25);
        }
      \foreach \i in {1,..., 5} {
          \node[filled vertex] (v\i) at ({18 - \i * (360 / 5)}:1.25) {};
          \node at ({18 - \i * (360 / 5)}:1.75) {$v_{\i}$};
        }
      \node[filled vertex] (u1) at (126:2.25) {};
      \node[filled vertex] (u2) at (54:2.25) {};
      \node[filled vertex] (u4) at (270:2.25) {};
      \node at (126:2.75) {$u_{1}$};
      \node at (54:2.75) {$u_{2}$};
      \node at (270:2.75) {$u_{4}$};
      \draw (u1) -- (v3) (u1) -- (v4);
      \draw (u2) -- (v4) (u2) -- (v5);
      \draw (u4) -- (v1) (u4) -- (v2);
      \draw (u2) -- (v2);
      \draw (u1) edge [bend right] (u4) (u2) edge [bend left] (u4);
    \end{tikzpicture}
    \caption*{$(14\mathring{2})$}
  \end{subfigure}

  \begin{subfigure}[b]{.18\textwidth}
    \centering
    \begin{tikzpicture} [scale=0.5]
      \foreach \i in {1,..., 5} {
          \draw ({18 - \i * (360 / 5)}:1.25) -- ({90 - \i * (360 / 5)}:1.25);
        }
      \foreach \i in {1,..., 5} {
          \node[filled vertex] (v\i) at ({18 - \i * (360 / 5)}:1.25) {};
          \node at ({18 - \i * (360 / 5)}:1.75) {$v_{\i}$};
        }
      \node[filled vertex] (u1) at (126:2.25) {};
      \node[filled vertex] (u2) at (54:2.25) {};
      \node[filled vertex] (u4) at (270:2.25) {};
      \node at (126:2.75) {$u_{1}$};
      \node at (54:2.75) {$u_{2}$};
      \node at (270:2.75) {$u_{4}$};
      \draw (u1) -- (v3) (u1) -- (v4);
      \draw (u2) -- (v4) (u2) -- (v5);
      \draw (u4) -- (v1) (u4) -- (v2);
      \draw (u2) -- (v2) (u4) -- (v4) (u2) edge [bend left] (u4);
    \end{tikzpicture}
    \caption*{$(1\|\mathring{2}\mathring{4})$}
  \end{subfigure}
  \begin{subfigure}[b]{.18\textwidth}
    \centering
    \begin{tikzpicture} [scale=0.5]
      \foreach \i in {1,..., 5} {
          \draw ({18 - \i * (360 / 5)}:1.25) -- ({90 - \i * (360 / 5)}:1.25);
        }
      \foreach \i in {1,..., 5} {
          \node[filled vertex] (v\i) at ({18 - \i * (360 / 5)}:1.25) {};
          \node at ({18 - \i * (360 / 5)}:1.75) {$v_{\i}$};
        }
      \node[filled vertex] (u1) at (126:2.25) {};
      \node[filled vertex] (u2) at (54:2.25) {};
      \node[filled vertex] (u4) at (270:2.25) {};
      \node at (126:2.75) {$u_{1}$};
      \node at (54:2.75) {$u_{2}$};
      \node at (270:2.75) {$u_{4}$};
      \draw (u1) -- (v3) (u1) -- (v4);
      \draw (u2) -- (v4) (u2) -- (v5);
      \draw (u4) -- (v1) (u4) -- (v2);
      \draw (u2) -- (v2) (u4) -- (v4)  (u1) edge [bend right] (u4) (u2) edge [bend left] (u4);
    \end{tikzpicture}
    \caption*{$(1\mathring{4}\mathring{2})$}
  \end{subfigure}
  \begin{subfigure}[b]{.18\textwidth}
    \centering
    \begin{tikzpicture} [scale=0.5]
      \foreach \i in {1,..., 5} {
          \draw ({18 - \i * (360 / 5)}:1.25) -- ({90 - \i * (360 / 5)}:1.25);
        }
      \foreach \i in {1,..., 5} {
          \node[filled vertex] (v\i) at ({18 - \i * (360 / 5)}:1.25) {};
          \node at ({18 - \i * (360 / 5)}:1.75) {$v_{\i}$};
        }
      \node[filled vertex] (u1) at (126:2.25) {};
      \node[filled vertex] (u2) at (54:2.25) {};
      \node[filled vertex] (u4) at (270:2.25) {};
      \node at (126:2.75) {$u_{1}$};
      \node at (54:2.75) {$u_{2}$};
      \node at (270:2.75) {$u_{4}$};
      \draw (u1) -- (v3) (u1) -- (v4);
      \draw (u2) -- (v4) (u2) -- (v5);
      \draw (u4) -- (v1) (u4) -- (v2);
      \draw (u1) -- (v1) (u2) -- (v2) (u4) -- (v4);
    \end{tikzpicture}
    \caption*{$(\mathring{1}\|\mathring{2}\|\mathring{4})$}
  \end{subfigure}
  \begin{subfigure}[b]{.18\textwidth}
    \centering
    \begin{tikzpicture} [scale=0.5]
      \foreach \i in {1,..., 5} {
          \draw ({18 - \i * (360 / 5)}:1.25) -- ({90 - \i * (360 / 5)}:1.25);
        }
      \foreach \i in {1,..., 5} {
          \node[filled vertex] (v\i) at ({18 - \i * (360 / 5)}:1.25) {};
          \node at ({18 - \i * (360 / 5)}:1.75) {$v_{\i}$};
        }
      \node[filled vertex] (u1) at (126:2.25) {};
      \node[filled vertex] (u2) at (54:2.25) {};
      \node[filled vertex] (u4) at (270:2.25) {};
      \node at (126:2.75) {$u_{1}$};
      \node at (54:2.75) {$u_{2}$};
      \node at (270:2.75) {$u_{4}$};
      \draw (u1) -- (v3) (u1) -- (v4);
      \draw (u2) -- (v4) (u2) -- (v5);
      \draw (u4) -- (v1) (u4) -- (v2);
      \draw (u1) -- (v1) (u2) -- (v2);
      \draw (u1) -- (u2) (u2) edge [bend left] (u4);
    \end{tikzpicture}
    \caption*{$(\mathring{1}\mathring{2}4)$}
  \end{subfigure}
  \begin{subfigure}[b]{.18\textwidth}
    \centering
    \begin{tikzpicture} [scale=0.5]
      \foreach \i in {1,..., 5} {
          \draw ({18 - \i * (360 / 5)}:1.25) -- ({90 - \i * (360 / 5)}:1.25);
        }
      \foreach \i in {1,..., 5} {
          \node[filled vertex] (v\i) at ({18 - \i * (360 / 5)}:1.25) {};
          \node at ({18 - \i * (360 / 5)}:1.75) {$v_{\i}$};
        }
      \node[filled vertex] (u1) at (126:2.25) {};
      \node[filled vertex] (u2) at (54:2.25) {};
      \node[filled vertex] (u4) at (270:2.25) {};
      \node at (126:2.75) {$u_{1}$};
      \node at (54:2.75) {$u_{2}$};
      \node at (270:2.75) {$u_{4}$};
      \draw (u1) -- (v3) (u1) -- (v4);
      \draw (u2) -- (v4) (u2) -- (v5);
      \draw (u4) -- (v1) (u4) -- (v2);
      \draw (u1) -- (v1) (u2) -- (v2) (u4) -- (v4);
      \draw (u1) -- (u2) (u2) edge [bend left] (u4);
    \end{tikzpicture}
    \caption*{$(\mathring{1}\mathring{2}\mathring{4})$}
  \end{subfigure}

  \begin{subfigure}[b]{.24\textwidth}
    \centering
    \begin{tikzpicture}[scale=0.5]
      \foreach \i in {1,..., 5} {
          \draw ({\i * (360 / 5) - 54}:1.25) -- ({\i * (360 / 5) + 18}:1.25);
        }
      \foreach \i in {1,..., 4} {
          \draw ({\i * (360 / 5) - 126}:1.25) -- ({\i * (360 / 5) - 90}:2.25) -- ({\i * (360 / 5) - 54}:1.25);
          \node[filled vertex] (u\i) at ({\i * (360 / 5) - 90}:2.25) {};
          \node at ({\i * (360 / 5) - 90}:2.75) {$u_{\i}$};
        }
      \foreach \i in {1,..., 5} {
          \node[filled vertex] (v\i) at ({\i * (360 / 5) + 90}:1.25) {};
          \node at ({\i * (360 / 5) + 90}:1.75) {$v_{\i}$};
        }
      \draw (u1) -- (u3) (u2) -- (u4);
      \draw (u3) -- (u4);
      \draw (u2) -- (v2) (u3) -- (v3) (u4) -- (v4);
    \end{tikzpicture}
    \caption*{$(1\mathring{3}\mathring{4}\mathring{2})$}
  \end{subfigure}
  \begin{subfigure}[b]{.24\textwidth}
    \centering
    \begin{tikzpicture}[scale=0.5]
      \foreach \i in {1,..., 5} {
          \draw ({\i * (360 / 5) - 54}:1.25) -- ({\i * (360 / 5) + 18}:1.25);
        }
      \foreach \i in {1,..., 4} {
          \draw ({\i * (360 / 5) - 126}:1.25) -- ({\i * (360 / 5) - 90}:2.25) -- ({\i * (360 / 5) - 54}:1.25);
          \node[filled vertex] (u\i) at ({\i * (360 / 5) - 90}:2.25) {};
          \node at ({\i * (360 / 5) - 90}:2.75) {$u_{\i}$};
        }
      \foreach \i in {1,..., 5} {
          \node[filled vertex] (v\i) at ({\i * (360 / 5) + 90}:1.25) {};
          \node at ({\i * (360 / 5) + 90}:1.75) {$v_{\i}$};
        }
      \draw (u1) -- (u3) (u2) -- (u4);
      \draw (u3) -- (u4);
      \draw (u3) -- (v3) (u4) -- (v4);
    \end{tikzpicture}
    \caption*{$(1\mathring{3}\mathring{4}2)$}
  \end{subfigure}
  \begin{subfigure}[b]{.24\textwidth}
    \centering
    \begin{tikzpicture}[scale=0.5]
      \foreach \i in {1,..., 5} {
          \draw ({\i * (360 / 5) - 54}:1.25) -- ({\i * (360 / 5) + 18}:1.25);
        }
      \foreach \i in {1,..., 4} {
          \draw ({\i * (360 / 5) - 126}:1.25) -- ({\i * (360 / 5) - 90}:2.25) -- ({\i * (360 / 5) - 54}:1.25);
          \node[filled vertex] (u\i) at ({\i * (360 / 5) - 90}:2.25) {};
          \node at ({\i * (360 / 5) - 90}:2.75) {$u_{\i}$};
        }
      \foreach \i in {1,..., 5} {
          \node[filled vertex] (v\i) at ({\i * (360 / 5) + 90}:1.25) {};
          \node at ({\i * (360 / 5) + 90}:1.75) {$v_{\i}$};
        }
      \draw (u1) -- (u3) (u2) -- (u4);
      \draw (u3) -- (u4);
      \draw (u1) -- (v1) (u3) -- (v3) (u4) -- (v4);
    \end{tikzpicture}
    \caption*{$(\mathring{1}\mathring{3}\mathring{4}2)$}
  \end{subfigure}
  \begin{subfigure}[b]{.24\textwidth}
    \centering
    \begin{tikzpicture}[scale=0.5]
      \foreach \i in {1,..., 5} {
          \draw ({\i * (360 / 5) - 54}:1.25) -- ({\i * (360 / 5) + 18}:1.25);
        }
      \foreach \i in {1,..., 4} {
          \draw ({\i * (360 / 5) - 126}:1.25) -- ({\i * (360 / 5) - 90}:2.25) -- ({\i * (360 / 5) - 54}:1.25);
          \node[filled vertex] (u\i) at ({\i * (360 / 5) - 90}:2.25) {};
          \node at ({\i * (360 / 5) - 90}:2.75) {$u_{\i}$};
        }
      \foreach \i in {1,..., 5} {
          \node[filled vertex] (v\i) at ({\i * (360 / 5) + 90}:1.25) {};
          \node at ({\i * (360 / 5) + 90}:1.75) {$v_{\i}$};
        }
      \draw (u2) -- (u4);
      \draw (u1) -- (u2) (u3) -- (u4);
      \draw (u1) -- (v1) (u2) -- (v2) (u4) -- (v4);
    \end{tikzpicture}
    \caption*{$(\mathring{1}\mathring{2}\mathring{4}3)$}
  \end{subfigure}

  \begin{subfigure}[b]{.24\textwidth}
    \centering
    \begin{tikzpicture}[scale=0.5]
      \foreach \i in {1,..., 5} {
          \draw ({\i * (360 / 5) - 54}:1.25) -- ({\i * (360 / 5) + 18}:1.25);
        }
      \foreach \i in {1,..., 4} {
          \draw ({\i * (360 / 5) - 126}:1.25) -- ({\i * (360 / 5) - 90}:2.25) -- ({\i * (360 / 5) - 54}:1.25);
          \node[filled vertex] (u\i) at ({\i * (360 / 5) - 90}:2.25) {};
          \node at ({\i * (360 / 5) - 90}:2.75) {$u_{\i}$};
        }
      \foreach \i in {1,..., 5} {
          \node[filled vertex] (v\i) at ({\i * (360 / 5) + 90}:1.25) {};
          \node at ({\i * (360 / 5) + 90}:1.75) {$v_{\i}$};
        }
      \draw (u1) -- (u3) (u1) -- (u4);
      \draw (u2) -- (u3);
      \draw (u3) -- (v3);
    \end{tikzpicture}
    \caption*{$(2\mathring{3}14)$}
  \end{subfigure}
  \begin{subfigure}[b]{.24\textwidth}
    \centering
    \begin{tikzpicture}[scale=0.5]
      \foreach \i in {1,..., 5} {
          \draw ({\i * (360 / 5) - 54}:1.25) -- ({\i * (360 / 5) + 18}:1.25);
        }
      \foreach \i in {1,..., 4} {
          \draw ({\i * (360 / 5) - 126}:1.25) -- ({\i * (360 / 5) - 90}:2.25) -- ({\i * (360 / 5) - 54}:1.25);
          \node[filled vertex] (u\i) at ({\i * (360 / 5) - 90}:2.25) {};
          \node at ({\i * (360 / 5) - 90}:2.75) {$u_{\i}$};
        }
      \foreach \i in {1,..., 5} {
          \node[filled vertex] (v\i) at ({\i * (360 / 5) + 90}:1.25) {};
          \node at ({\i * (360 / 5) + 90}:1.75) {$v_{\i}$};
        }
      \draw (u1) -- (u3) (u1) -- (u4);
      \draw (u2) -- (u3);
      \draw (u1) -- (v1) (u3) -- (v3);
    \end{tikzpicture}
    \caption*{$(2\mathring{3}\mathring{1}4)$}
  \end{subfigure}
  \begin{subfigure}[b]{.24\textwidth}
    \centering
    \begin{tikzpicture}[scale=0.5]
      \foreach \i in {1,..., 5} {
          \draw ({\i * (360 / 5) - 54}:1.25) -- ({\i * (360 / 5) + 18}:1.25);
        }
      \foreach \i in {1,..., 4} {
          \draw ({\i * (360 / 5) - 126}:1.25) -- ({\i * (360 / 5) - 90}:2.25) -- ({\i * (360 / 5) - 54}:1.25);
          \node[filled vertex] (u\i) at ({\i * (360 / 5) - 90}:2.25) {};
          \node at ({\i * (360 / 5) - 90}:2.75) {$u_{\i}$};
        }
      \foreach \i in {1,..., 5} {
          \node[filled vertex] (v\i) at ({\i * (360 / 5) + 90}:1.25) {};
          \node at ({\i * (360 / 5) + 90}:1.75) {$v_{\i}$};
        }
      \draw (u1) -- (u3) (u1) -- (u4);
      \draw (u2) -- (u3);
      \draw (u1) -- (v1);
    \end{tikzpicture}
    \caption*{$(23\mathring{1}4)$}
  \end{subfigure}
  \begin{subfigure}[b]{.24\textwidth}
    \centering
    \begin{tikzpicture}[scale=0.5]
      \foreach \i in {1,..., 5} {
          \draw ({\i * (360 / 5) - 54}:1.25) -- ({\i * (360 / 5) + 18}:1.25);
        }
      \foreach \i in {1,..., 4} {
          \draw ({\i * (360 / 5) - 126}:1.25) -- ({\i * (360 / 5) - 90}:2.25) -- ({\i * (360 / 5) - 54}:1.25);
          \node[filled vertex] (u\i) at ({\i * (360 / 5) - 90}:2.25) {};
          \node at ({\i * (360 / 5) - 90}:2.75) {$u_{\i}$};
        }
      \foreach \i in {1,..., 5} {
          \node[filled vertex] (v\i) at ({\i * (360 / 5) + 90}:1.25) {};
          \node at ({\i * (360 / 5) + 90}:1.75) {$v_{\i}$};
        }
      \draw (u1) -- (u4);
      \draw (u2) -- (u3) (u3) -- (u4) ;
      \draw (u4) -- (v4);
    \end{tikzpicture}
    \caption*{$(1\mathring{4}32)$}
  \end{subfigure}

  \begin{subfigure}[b]{.24\textwidth}
    \centering
    \begin{tikzpicture}[scale=0.5]
      \foreach \i in {1,..., 5} {
          \draw ({\i * (360 / 5) - 54}:1.25) -- ({\i * (360 / 5) + 18}:1.25);
        }
      \foreach \i in {1,..., 4} {
          \draw ({\i * (360 / 5) - 126}:1.25) -- ({\i * (360 / 5) - 90}:2.25) -- ({\i * (360 / 5) - 54}:1.25);
          \node[filled vertex] (u\i) at ({\i * (360 / 5) - 90}:2.25) {};
          \node at ({\i * (360 / 5) - 90}:2.75) {$u_{\i}$};
        }
      \foreach \i in {1,..., 5} {
          \node[filled vertex] (v\i) at ({\i * (360 / 5) + 90}:1.25) {};
          \node at ({\i * (360 / 5) + 90}:1.75) {$v_{\i}$};
        }
      \draw (u1) -- (u3) (u2) -- (u4);
      \draw (u1) -- (u2) (u3) -- (u4);
      \draw (u1) -- (v1) (u2) -- (v2) (u3) -- (v3) (u4) -- (v4);
    \end{tikzpicture}
    \caption*{$(\mathring{1}\mathring{2}\mathring{4}\mathring{3}\mathring{1})$}
  \end{subfigure}
  \begin{subfigure}[b]{.24\textwidth}
    \centering
    \begin{tikzpicture}[scale=0.5]
      \foreach \i in {1,..., 5} {
          \draw ({\i * (360 / 5) - 54}:1.25) -- ({\i * (360 / 5) + 18}:1.25);
        }
      \foreach \i in {1,..., 4} {
          \draw ({\i * (360 / 5) - 126}:1.25) -- ({\i * (360 / 5) - 90}:2.25) -- ({\i * (360 / 5) - 54}:1.25);
          \node[filled vertex] (u\i) at ({\i * (360 / 5) - 90}:2.25) {};
          \node at ({\i * (360 / 5) - 90}:2.75) {$u_{\i}$};
        }
      \foreach \i in {1,..., 5} {
          \node[filled vertex] (v\i) at ({\i * (360 / 5) + 90}:1.25) {};
          \node at ({\i * (360 / 5) + 90}:1.75) {$v_{\i}$};
        }
      \draw (u2) -- (u4);
      \draw (u1) -- (u2) (u3) -- (u4);
      \draw (u2) -- (v2) (u4) -- (v4);
    \end{tikzpicture}
    \caption*{$(1\mathring{2}\mathring{4}3)$}
  \end{subfigure}
  \begin{subfigure}[b]{.24\textwidth}
    \centering
    \begin{tikzpicture}[scale=0.5]
      \foreach \i in {1,..., 5} {
          \draw ({\i * (360 / 5) - 54}:1.25) -- ({\i * (360 / 5) + 18}:1.25);
        }
      \foreach \i in {1,..., 4} {
          \draw ({\i * (360 / 5) - 126}:1.25) -- ({\i * (360 / 5) - 90}:2.25) -- ({\i * (360 / 5) - 54}:1.25);
          \node[filled vertex] (u\i) at ({\i * (360 / 5) - 90}:2.25) {};
          \node at ({\i * (360 / 5) - 90}:2.75) {$u_{\i}$};
        }
      \foreach \i in {1,..., 5} {
          \node[filled vertex] (v\i) at ({\i * (360 / 5) + 90}:1.25) {};
          \node at ({\i * (360 / 5) + 90}:1.75) {$v_{\i}$};
        }
      \draw (u1) -- (u3) (u1) -- (u4) (u2) -- (u4);
      \draw (u2) -- (u3);
      \draw (u2) -- (v2) (u3) -- (v3);
    \end{tikzpicture}
    \caption*{$(1\mathring{3}\mathring{2}41)$}
  \end{subfigure}
  \begin{subfigure}[b]{.24\textwidth}
    \centering
    \begin{tikzpicture}[scale=0.5]
      \foreach \i in {1,..., 5} {
          \draw ({\i * (360 / 5) - 54}:1.25) -- ({\i * (360 / 5) + 18}:1.25);
        }
      \foreach \i in {1,..., 4} {
          \draw ({\i * (360 / 5) - 126}:1.25) -- ({\i * (360 / 5) - 90}:2.25) -- ({\i * (360 / 5) - 54}:1.25);
          \node[filled vertex] (u\i) at ({\i * (360 / 5) - 90}:2.25) {};
          \node at ({\i * (360 / 5) - 90}:2.75) {$u_{\i}$};
        }
      \foreach \i in {1,..., 5} {
          \node[filled vertex] (v\i) at ({\i * (360 / 5) + 90}:1.25) {};
          \node at ({\i * (360 / 5) + 90}:1.75) {$v_{\i}$};
        }
      \draw (u1) -- (u3) (u2) -- (u4);
      \draw (u2) -- (u3);
      \draw (u2) -- (v2) (u3) -- (v3);
    \end{tikzpicture}
    \caption*{$(1\mathring{3}\mathring{2}4)$}
  \end{subfigure}

  \begin{subfigure}[b]{.24\textwidth}
    \centering
    \begin{tikzpicture}[scale=0.5]
      \foreach \i in {1,..., 5} {
          \draw ({\i * (360 / 5) - 54}:1.25) -- ({\i * (360 / 5) + 18}:1.25);
        }
      \foreach \i in {1,..., 4} {
          \draw ({\i * (360 / 5) - 126}:1.25) -- ({\i * (360 / 5) - 90}:2.25) -- ({\i * (360 / 5) - 54}:1.25);
          \node[filled vertex] (u\i) at ({\i * (360 / 5) - 90}:2.25) {};
          \node at ({\i * (360 / 5) - 90}:2.75) {$u_{\i}$};
        }
      \foreach \i in {1,..., 5} {
          \node[filled vertex] (v\i) at ({\i * (360 / 5) + 90}:1.25) {};
          \node at ({\i * (360 / 5) + 90}:1.75) {$v_{\i}$};
        }
      \draw (u1) -- (u4);
      \draw (u2) -- (u3);
    \end{tikzpicture}
    \caption*{$(14\|23)$}
  \end{subfigure}
  \begin{subfigure}[b]{.24\textwidth}
    \centering
    \begin{tikzpicture}[scale=0.5]
      \foreach \i in {1,..., 5} {
          \draw ({\i * (360 / 5) - 54}:1.25) -- ({\i * (360 / 5) + 18}:1.25);
        }
      \foreach \i in {1,..., 4} {
          \draw ({\i * (360 / 5) - 126}:1.25) -- ({\i * (360 / 5) - 90}:2.25) -- ({\i * (360 / 5) - 54}:1.25);
          \node[filled vertex] (u\i) at ({\i * (360 / 5) - 90}:2.25) {};
          \node at ({\i * (360 / 5) - 90}:2.75) {$u_{\i}$};
        }
      \foreach \i in {1,..., 5} {
          \node[filled vertex] (v\i) at ({\i * (360 / 5) + 90}:1.25) {};
          \node at ({\i * (360 / 5) + 90}:1.75) {$v_{\i}$};
        }
      \draw (u1) -- (u4);
      \draw (u2) -- (u3) (u3) -- (u4);
      \draw (u3) -- (v3) (u4) -- (v4) ;
    \end{tikzpicture}
    \caption*{$(1\mathring{4}\mathring{3}2)$}
  \end{subfigure}
  \begin{subfigure}[b]{.24\textwidth}
    \centering
    \begin{tikzpicture}[scale=0.5]
      \foreach \i in {1,..., 5} {
          \draw ({\i * (360 / 5) - 54}:1.25) -- ({\i * (360 / 5) + 18}:1.25);
        }
      \foreach \i in {1,..., 4} {
          \draw ({\i * (360 / 5) - 126}:1.25) -- ({\i * (360 / 5) - 90}:2.25) -- ({\i * (360 / 5) - 54}:1.25);
          \node[filled vertex] (u\i) at ({\i * (360 / 5) - 90}:2.25) {};
          \node at ({\i * (360 / 5) - 90}:2.75) {$u_{\i}$};
        }
      \foreach \i in {1,..., 5} {
          \node[filled vertex] (v\i) at ({\i * (360 / 5) + 90}:1.25) {};
          \node at ({\i * (360 / 5) + 90}:1.75) {$v_{\i}$};
        }
      \draw (u1) -- (u3) (u2) -- (u4);
      \draw (u2) -- (v2) (u3) -- (v3);
    \end{tikzpicture}
    \caption*{$(1\mathring{3}\|\mathring{2}4)$}
  \end{subfigure}
  \begin{subfigure}[b]{.24\textwidth}
    \centering
    \begin{tikzpicture}[scale=0.5]
      \foreach \i in {1,..., 5} {
          \draw ({\i * (360 / 5) - 54}:1.25) -- ({\i * (360 / 5) + 18}:1.25);
        }
      \foreach \i in {1,..., 4} {
          \draw ({\i * (360 / 5) - 126}:1.25) -- ({\i * (360 / 5) - 90}:2.25) -- ({\i * (360 / 5) - 54}:1.25);
          \node[filled vertex] (u\i) at ({\i * (360 / 5) - 90}:2.25) {};
          \node at ({\i * (360 / 5) - 90}:2.75) {$u_{\i}$};
        }
      \foreach \i in {1,..., 5} {
          \node[filled vertex] (v\i) at ({\i * (360 / 5) + 90}:1.25) {};
          \node at ({\i * (360 / 5) + 90}:1.75) {$v_{\i}$};
        }
      \draw (u1) -- (u3) (u1) -- (u4) (u2) -- (u4);
      \draw (u2) -- (v2) (u3) -- (v3);
    \end{tikzpicture}
    \caption*{$(\mathring{2}41\mathring{3})$}
  \end{subfigure}
  \caption{Some t-perfect graphs}
  \label{fig:t-perfect graphs}
\end{figure}

\begin{table} [h]
  \centering
  \caption{For the proof of Proposition~\ref{lem:t-perfect-graphs}}
  \label{table:2}
  \begin{tabular}{ l | c c c c}
    \toprule
                                                                     & {$K = \{a, b, c\}$}       & $G - a$                                          & $G - b$             & $G - c$                                              \\
    \midrule
    $(1\|23)$                                                        & $\{v_{3}, v_{4}, u_{1}\}$ & $\star$                                          & $\star$             & $(12)$                                               \\[1ex]
    $(1\|\mathring{2}4)$                                             & $\{v_{1}, v_{2}, u_{4}\}$ & $\star$                                          & $\star$             & $(1\|\mathring{2})$                                  \\[1ex]
    $(14\mathring{2})$                                               & $\{v_{1}, v_{2}, u_{4}\}$ & $\star$                                          & $(1\|\mathring{2})$ & $(1\|\mathring{2})$                                  \\[1ex]
    $(1\|\mathring{2}\mathring{4})$                                  & $\{v_{1}, v_{2}, u_{4}\}$ & $\star$                                          & $\star$             & $(1\|\mathring{2})$                                  \\[1ex]
    $(1\mathring{4}\mathring{2})$                                    & $\{v_{1}, v_{2}, u_{4}\}$ & $\star$                                          & $\star$             & $(1\|\mathring{2})$                                  \\[1ex]
    $(\mathring{1}\|\mathring{2}\|\mathring{4})$                     & $\{v_{1}, v_{2}, u_{4}\}$ & $\star$                                          & $\star$             & $(\mathring{1}\|\mathring{2})$                       \\[1ex]
    $(\mathring{1}\mathring{2}4)$                                    & $\{v_{1}, v_{2}, u_{4}\}$ & $\star$                                          & $\star$             & $(\mathring{1}\mathring{2})$                         \\[1ex]
    $(\mathring{1}\mathring{2}\mathring{4})$                         & $\{v_{4}, v_{5}, u_{2}\}$ & $\star$                                          & $\star$             & $(\mathring{1}\|\mathring{2}\|\mathring{4} - u_{2})$ \\[1ex]
    $(\mathring{3}1\mathring{2})$                                    & $\{v_{1}, v_{5}, u_{3}\}$ & $(\mathring{1}\mathring{2}\mathring{4} - u_{1})$ & $(1\mathring{2})$   & $(1\mathring{2})$                                    \\[1ex]
    $(\mathring{1}\mathring{3}\|\mathring{2})$                       & $\{v_{4}, v_{5}, u_{2}\}$ & $(1\mathring{2})$                                & $(1\mathring{2})$   & $(\mathring{1}\mathring{2}\mathring{4} - u_{1})$     \\[1ex]
    $(1\mathring{3}\mathring{4}\mathring{2})$                        & $\{v_{4}, v_{5}, u_{2}\}$ & $(1\|\mathring{2}\mathring{4})$                  & $\star$             & $(\mathring{1}\mathring{2}4)$                        \\[1ex]
    $(1\mathring{3}\mathring{4}2)$                                   & $\{v_{4}, v_{5}, u_{2}\}$ & $\star$                                          & $\star$             & $(\mathring{1}\mathring{2}4)$                        \\[1ex]
    $(\mathring{1}\mathring{3}\mathring{4}2)$                        & $\{v_{4}, v_{5}, u_{2}\}$ & $\star$                                          & $\star$             & $(\mathring{1}\mathring{2}\mathring{4})$             \\[1ex]
    $(\mathring{1}\mathring{2}\mathring{4}3)$                        & $\{v_{1}, v_{5}, u_{3}\}$ & $\star$                                          & $\star$             & $(\mathring{1}\mathring{2}\mathring{4})$             \\[1ex]
    $(2\mathring{3}14)$                                              & $\{v_{4}, v_{5}, u_{2}\}$ & $\star$                                          & $\star$             & $(14\mathring{2})$                                   \\[1ex]
    $(2\mathring{3}\mathring{1}4)$                                   & $\{v_{4}, v_{5}, u_{2}\}$ & $\star$                                          & $\star$             & $(1\mathring{4}\mathring{2})$                        \\[1ex]
    $(23\mathring{1}4)$                                              & $\{v_{3}, v_{4}, u_{1}\}$ & $\star$                                          & $\star$             & $(1\|23)$                                            \\[1ex]
    $(1\mathring{4}32)$                                              & $\{v_{3}, v_{4}, u_{1}\}$ & $\star$                                          & $\star$             & $(1\mathring{2}3451) - \{u_{3}, u_{4}\}$                   \\[1ex]
    $(\mathring{1}\mathring{2}\mathring{4}\mathring{3}\mathring{1})$ & $\{v_{4}, v_{5}, u_{2}\}$ & $(\mathring{1}\mathring{2}\mathring{4})$         & $\star$             & $(\mathring{1}\mathring{2}\mathring{4})$             \\[1ex]
    $(1\mathring{2}\mathring{4}3)$                                   & $\{v_{3}, v_{4}, u_{1}\}$ & $\star$                                          & $\star$             & $(\mathring{1}\mathring{2}\mathring{4}3) - u_{1}$    \\[1ex]
    $(1\mathring{3}\mathring{2}41)$                                  & $\{v_{4}, v_{5}, u_{2}\}$ & $\star$                                          & $\star$             & $(14\mathring{2})$                                   \\[1ex]
    $(1\mathring{3}\mathring{2}4)$                                   & $\{v_{4}, v_{5}, u_{2}\}$ & $\star$                                          & $\star$             & $(1\|\mathring{2}4)$                                 \\[1ex]
    $(14\|23)$                                                       & $\{v_{3}, v_{4}, u_{1}\}$ & $(1\|23)$                                        & $\star$             & $(1\|23)$                                            \\[1ex]
    $(1\mathring{4}\mathring{3}2)$                                   & $\{v_{3}, v_{4}, u_{1}\}$ & $\star$                                          & $\star$             & $(123\mathring{4}\mathring{5}1) - \{u_{1}, u_{2}\}$                  \\[1ex]
    $(1\mathring{3}\|\mathring{2}4)$                                 & $\{v_{4}, v_{5}, u_{2}\}$ & $(1\|\mathring{2}4)$                             & $\star$             & $(1\|\mathring{2}4)$                                 \\[1ex]
    $(\mathring{2}41\mathring{3})$                                   & $\{v_{4}, v_{5}, u_{2}\}$ & $(14\mathring{2})$                               & $\star$             & $(14\mathring{2})$                                   \\[1ex]
    \bottomrule
  \end{tabular}
\end{table}

\begin{lemma}
  \label{lem:minimally-t-imperfect-graphs}
  Graphs $(1\mathring{2}43\mathring{5}1)$ and $(\mathring{1}\mathring{2}43\mathring{5}\mathring{1})$ are minimally t-imperfect.
\end{lemma}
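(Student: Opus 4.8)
The plan is to establish the two requirements of minimal t-imperfection for both $G := (1\mathring{2}43\mathring{5}1)$ and its complement $\overline{G} = (\mathring{1}\mathring{2}43\mathring{5}\mathring{1})$: that each is t-imperfect, and that each of their proper t-minors is t-perfect. For t-imperfection I would first record that $G$ and $\overline{G}$ are $(3,3)$-partitionable -- the required partitions of $V(G)\setminus\{v\}$ into three triangles and into three independent triples can be read off from Figure~\ref{fig: D graphs}(e, j) -- whence $\alpha(G)=\omega(G)=3$ and likewise for $\overline{G}$. Then the all-$\tfrac13$ vector $x$ lies in $P(G)$: it satisfies non-negativity and the edge inequalities trivially, and for any odd cycle $C$ of length $2k+1$ one has $x(V(C))=\tfrac{2k+1}{3}\le k$. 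However $\sum_{v}x_v=\tfrac{10}{3}>3=\alpha(G)$, which is the maximum of $\sum_v x_v$ over the independent set polytope; hence $x$ is not in that polytope and $G$ is t-imperfect, and symmetrically so is $\overline{G}$.

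For minimality I would next observe that no proper t-minor arises from a t-contraction. A t-contraction is available only at a vertex $v$ with $N(v)$ independent, but every vertex of $G$ has degree $4$ or $5$, so $|N(v)|\ge 4>3=\alpha(G)$ and $N(v)$ cannot be independent; the same holds in $\overline{G}$ because $\delta(\overline{G})=9-\Delta(G)=4>3=\alpha(\overline{G})$. Consequently every proper t-minor of $G$ is a t-minor of some $G-v$, and since t-perfection is closed under t-minors it suffices to prove that $G-v$ is t-perfect for every $v\in V(G)$, and similarly that $\overline{G-v}=\overline{G}-v$ is t-perfect for every $v$.

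To carry this out I would use the automorphism of $G$ that fixes $v_1$ and $u_1$ and swaps $v_2\leftrightarrow v_5$, $v_3\leftrightarrow v_4$, $u_2\leftrightarrow u_5$, $u_3\leftrightarrow u_4$; this reduces the vertex-deletions to the six orbit representatives $v_1$, $v_2$, $v_3$, $u_1$, $u_2$, $u_3$. Each $G-v$ is a nine-vertex graph all of whose degrees lie between $3$ and $5$, and it still contains an induced $5$-cycle: indeed $U=\{u_1,\dots,u_5\}$ already induces the chordless $5$-cycle $u_1 u_2 u_4 u_3 u_5 u_1$ of $G$, while $C=v_1\cdots v_5$ is a $5$-hole, and every vertex of $G$ lies on one of these two holes, so at least one hole survives the deletion; re-rooting the notation of Figure~\ref{fig:possible configurations of G}(b) at the surviving hole exhibits $G-v$ as one of the degree-bounded nine-vertex graphs treated in Section~\ref{sec:graph-with-pattern-g}. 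In fact $G-u_1$, $G-u_2$, $G-u_3$ are precisely the graphs $(1\mathring{2}43\mathring{5}1)-u_1$, $(1\mathring{2}43\mathring{5}1)-u_2$, $(1\mathring{2}43\mathring{5}1)-u_3$ already shown t-perfect in (the proof of) Lemma~\ref{lem:minimal-graphs-of-pattern-g}, and the hole-deletions $G-v_i$ become, after renaming, graphs from the lists of Proposition~\ref{lem:t-perfect-graphs} and Section~\ref{sec:graph-with-pattern-g}; for any case not literally on those lists I would invoke Benchetrit's criterion (Proposition~\ref{prop:nice graph}) with a suitable triangle, reducing to $K_4$-free perfect or almost bipartite eight-vertex graphs, exactly in the style of Table~\ref{table:2}. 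The complements $\overline{G-v}$ are degree-bounded nine-vertex graphs of the same type and are disposed of in the same manner.

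The main obstacle is the finitary bookkeeping in the last paragraph: one has to identify each of the roughly dozen distinct nine-vertex graphs $G-v$ and $\overline{G-v}$ with a graph already proved t-perfect, and, wherever the order-nine classification is quoted, make sure no circular appeal is made to $G$ itself being minimally t-imperfect -- so the cautious route is to re-derive t-perfection of each $G-v$ directly from Propositions~\ref{prop:nice graph} and \ref{prop:perfect and t-perfect} together with almost-bipartiteness, rather than from Lemma~\ref{lem:minimal-graphs-of-pattern-g}.
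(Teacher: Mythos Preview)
Your proposal is correct and follows essentially the same approach as the paper: the all-$\tfrac13$ vector for t-imperfection, the observation that no t-contraction is available, and then vertex-by-vertex t-perfection of $G-v$ via Proposition~\ref{prop:nice graph}. The one simplification you miss is that the paper observes $G-u_i\cong G-v_i$ for $i=1,\dots,5$ (in addition to your reflection symmetry), which collapses the twelve deletions of $G$ and $\overline{G}$ down to just five nine-vertex graphs to verify directly in a short table, bypassing any appeal to Lemma~\ref{lem:minimal-graphs-of-pattern-g} and the circularity concern you raise.
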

\begin{proof}
  First, we show that neither of $(1\mathring{2}43\mathring{5}1)$ and $(\mathring{1}\mathring{2}43\mathring{5}\mathring{1})$ is t-perfect.  Let $G$ be either of them, and  %
let ${x} = (\frac{1}{3}, \ldots, \frac{1}{3})^{\mathsf{T}}$.  It is not difficult to use the definition to verify that $x$ is in $P(G)$.  On the other hand, $x$ is not in the independent set polytope of $G$ because the maximum independent sets for both graph have order three, while the sum of elements in $x$ is $10\over 3$.  Thus, the independent set polytope of $G$ is different from $P(G)$.

  We now argue that every proper t-minor of $G$ is t-perfect.  Since $N(v)$ is not an independent set for every $v \in V(G)$, it suffices to show that $G - v$ is t-perfect for every $v \in V(G)$.
  This can be further simplified by the following two observations.  First, $G - u_{2}\cong G - u_{5}$ and $G - u_{3}\cong G - u_{4}$.  Second, $G - u_{i}\cong G - v_{i}$ for $i = 1, \dots, 5$.
  It is easy to verify that both $(1\mathring{2}43\mathring{5}1) - u_{1}$ and $(\mathring{1}\mathring{2}43\mathring{5}\mathring{1}) - u_{1}$ are isomorphic to $(\mathring{1}32\mathring{4})$, while $(1\mathring{2}43\mathring{5}1) - u_{2}\cong (21\mathring{3}4)$ and $(\mathring{1}\mathring{2}43\mathring{5}\mathring{1}) - u_{2}\cong (21\mathring{3}\mathring{4})$ and $(1\mathring{2}43\mathring{5}1) - u_{3}\cong (1\mathring{4}3\mathring{2})$ and $(\mathring{1}\mathring{2}43\mathring{5}\mathring{1}) - u_{3}\cong (1\mathring{4}\mathring{3}\mathring{2})$.
To prove the t-perfection of the five graphs $(\mathring{1}32\mathring{4})$, $(21\mathring{3}4)$, $(21\mathring{3}\mathring{4})$, $(1\mathring{4}3\mathring{2})$, and $(1\mathring{4}\mathring{3}\mathring{2})$-perfect, we use samilar arguments as those in Proposition~\ref{lem:t-perfect-graphs}, and the details are listed in Table~\ref{table:3}.
\end{proof}

\begin{table} [h]
  \centering
  \caption{For the proof of Lemma~\ref{lem:minimally-t-imperfect-graphs}}
  \label{table:3}
  \begin{tabular}{ l | c c c c}
    \toprule
                                              & {$K = \{a, b, c\}$}       & $G - a$ & $G - b$ & $G - c$                                                                   \\
    \midrule
    $(\mathring{1}32\mathring{4})$            & $\{v_{3}, v_{4}, u_{1}\}$ & $\star$ & $\star$ & $(23\mathring{1}4) - u_{4}$                                               \\[1ex]
    $(21\mathring{3}4)$                       & $\{v_{1}, v_{2}, u_{4}\}$ & $\star$ & $\star$ & $(23\mathring{1}4) - u_{4}$                                               \\[1ex]
    $(21\mathring{3}\mathring{4})$            & $\{v_{1}, v_{2}, u_{4}\}$ & $\star$ & $\star$ & $(23\mathring{1}4) - u_{4}$                                               \\[1ex]
    $(1\mathring{4}3\mathring{2})$            & $\{v_{3}, v_{4}, u_{1}\}$ & $\star$ & $\star$ & $(\mathring{1}2\mathring{3}45\mathring{1}) - \{u_{4}, u_{5}\}$            \\[1ex]
    $(1\mathring{4}\mathring{3}\mathring{2})$ & $\{v_{3}, v_{4}, u_{1}\}$ & $\star$ & $\star$ & $(\mathring{1}\mathring{2}\mathring{3}45\mathring{1}) - \{u_{4}, u_{5}\}$ \\[1ex]
    \bottomrule
  \end{tabular}
\end{table}

\end{document}